\numberwithin{equation}{section}
\theoremstyle{plain}
\newtheorem{theoreme}{Theor\`eme}
\newtheorem{corollaire}{Corollaire}
\newtheorem{lemme}{Lemme}
\newtheorem{proposition}{Proposition}
\theoremstyle{definition}
\newtheorem{definition}{Définition}
\newtheorem{exemple}{Exemple}
\newtheorem*{notation}{Notation}
\newtheorem{remarque}{Remarque}
\newtheorem*{acknowledgment}{Remerciements}
\theoremstyle{remark}
\newcommand{\R}{\mathbb{R}}
\newcommand{\C}{\mathbb{C}}
\newcommand{\Z}{\mathbb{Z}}
\newcommand{\Q}{\mathbb{Q}}
\newcommand{\N}{\mathbb{N}}
\newcommand{\F}{\mathbb{F}}
\newcommand{\A}{\mathbb{A}}
\newcommand{\pgcd}{\mathrm{pgcd}}
\newcommand{\id}{\mathrm{id}}
\newcommand{\SL}{\mathrm{SL}}
\newcommand{\GL}{\mathrm{GL}}
\newcommand{\Hom}{\mathrm{Hom}}
\newcommand{\End}{\mathrm{End}}
\newcommand{\Ens}{\mathfrak{Ens}}
\newcommand{\Spec}{\mathrm{Spec\ }}
\newcommand{\abs}[1]{\lvert#1\rvert}
\def \Ã©{\'e}
\def \Ã¨{\`{e}}
\def \Ã¹{\`{u}}
\def \Ã®{\^{\i}}
\def \Ã {\`{a}}
\def \Ã§{\c{c}}
\title{Compactification projective de $\Spec\Z$ \\ (d'après Durov)}
\author{Javier Fresán\footnote{Il s'agit d'une version très préliminaire d'un survey sur la thèse de Durov. Tout commentaire est bienvenu au courriel de l'auteur: javierfresan@gmail.com.}}
\begin{document}

\maketitle

\tableofcontents

\section{Introduction}

Parmi les nombreux outils mathematiques developpés par Gauss, certains ont eu un destin tout particulier. C'est le cas des $q$-entiers, introduits par lui afin d'évaluer ce que l'on appelle aujourd'hui les sommes quadratiques gaussiennes. Ils servent, par exemple, à formuler des analogues non commutatifs du théorème du binôme et ils jouent un rôle proéminent dans la mécanique quantique. Mais c'est leur apparition dans le cadre des géométries finies qui a ouvert une voie de recherche tout à fait inattendue. En effet, lorsque $q$ est une puissance du nombre premier $p,$ le cardinal des espaces projectifs sur le corps à $q$ éléments n'est autre que
\begin{equation*}
\abs{\mathbb{P}^{n-1}(\F_q)}=\frac{\abs{\mathbb{A}^n(\F_q)-\{0\}}}{\abs{\mathbb{G}_m(\F_q)}}=\frac{q^n-1}{q-1}=[n]_q,
\end{equation*} et l'on obtient aussi une formule pour le cardinal de la Grassmanienne: 
\begin{equation*}
\abs{\mathrm{Gr}(n,j)(\F_q)}=\abs{\{\mathbb{P}^j(\F_q)\subset \mathbb{P}^n(\F_q)\}}={n\choose j}_q=\frac{[n]_q!}{[j]_q![n-j]q!}. 
\end{equation*} La remarque que les expressions ci-dessus ont encore un sens lorsque $q=1$ a conduit Tits en 1957 à postuler l'existence d'un objet algébrique, \textit{le corps de caractéristique un}, sur lequel on pourrait écrire formellement
\begin{equation*}
\abs{\mathbb{P}^{n-1}(\F_1)}=[n]_1=n, \quad \abs{\mathrm{Gr}(n,j)(\F_1)}={n\choose j}_1={n\choose j}.
\end{equation*} Ainsi, $\mathbb{P}^{n-1}(\F_1)$ serait juste un ensemble fini $P$ à $n$ éléments, et $\mathrm{Gr}(n,j)(\F_1),$ l'ensemble des parties de $P$ ayant cardinal $j.$ En niant l'axiome d'après lequel une droite projective possède au moins trois points, on obtient des espaces où la géométrie devient combinatoire. Notamment, on peut imaginer $\GL_n(\F_1)$ comme le group symétrique $\mathcal{S}_n$ et $\SL_n(\F_1)$ comme étant formé par les permutations paires. Le programme de Tits consiste alors à interpréter les groupes de Weyl comme des groupes de Chevalley sur le corps à un élément \cite{Ti}.        

\smallskip
Il a fallu attendre jusqu'en 1994 pour la suite des événements. Dans ses conférences sur la fonction zêta et les motifs \cite{Ma1}, largement inspirées des travaux de Deninger et Kurokawa, Manin se réfère à $\Spec\F_1$ comme \textit{le point absolu} dont on aurait besoin pour reproduire la preuve de Weil de l'hypothèse de Riemann pour les courbes sur un corps fini. Les fonctions zêta des variétés sur $\F_1$ auraient la forme la plus simple, comme celle-ci: $\zeta_{\mathbb{P}^N(\F_1)}(s)=s(s-1)\ldots(s-N).$ En essayant de donner un sens au produit infini régularisé
\begin{equation*}
\frac{(2\pi)^s}{\Gamma(s)}=\prod_{n\geq 0} \frac{s+n}{2\pi},
\end{equation*} Manin a conjecturé l'existence d'une catégorie de motifs sur $\F_1$ et encouragé les mathématiciens à développer une géométrie algébrique sur le corps à un élément.         

\smallskip
 
Même si cette suggestion a pris forme dans de très diverses notions de schémas sur $\F_1,$ il y a quelques traits communs. D'abord, on est tous d'accord sur le fait que la catégorie des anneaux commutatifs est très restrictive pour accueillir les nouveaux êtres, une limitation qui a été aussi mise en évidence, pour des propos différents, dans le contexte de la géométrie analytique globale \cite{Pa}. Autrement dit, $\Spec\Z$ est trop compliqué comme objet final de la catégorie des schémas affines. Ainsi, Durov \cite{Du} et Shai-Haran \cite{Ha} se proposent de l'élargir jusqu'à avoir $\Spec\F_1$, ou même un spectre encore plus simple, comme objet final. La tendance générale, c'est d'oublir la somme afin d'obtenir une géométrie non-additive. C'est le point de vue de Deitmar, qui imite la théorie des schémas juste en remplacent les anneaux par les monoïdes commutatifs. Dans son approche, un sous-monoïde $P$ de $M$ es premier si $xy\in P$ implique $x\in P$ ou $y\in P.$ Alors, un schéma affine est l'ensemble $\mathrm{Spec\ }M$ des sous-monoïdes premiers de $M$ muni d'un faisceau de monoïdes localisés (voir \cite{De1} pour les détails de cette construction).

\smallskip

Dans leur très beau article \textit{Au dessous de $\Spec\Z$} \cite{ToVa}, Töen et Vaquié ont développé la géométrie algébrique sur $\F_1$ comme un cas particulier de géométrie algébrique relative à une catégorie monoïdale symétrique vérifiant un certain nombre de propriétés techniques sur l'existence de limites. Si $(C,\times,\textbf{1})$ est une telle catégorie, on définit $\mathrm{Comm}(C)$ comme la catégorie de monoïdes commutatifs à l'intérieur de $C,$ ce qui va remplacer les anneaux commutatifs classiques. Alors, en posant $\mathrm{Aff}_C:=\mathrm{Comm}(C)^{\text{op}},$ avec $\mathrm{Spec}$ le foncteur d'une catégorie vers la catégorie duale, on obtient des schémas affines ayant une topologie de Grothendieck que l'on peut recoller pour avoir des schémas généraux. Les schémas sur $\F_1$ sont les schémas relatifs à la catégorie monoïdale symétrique $(\Ens,\times,\ast),$ où $\times$ est le produit cartésien d'ensembles. Lorsque l'on fait un certain relèvement, on obtient des schémas classiques, toujours une façon de tester que l'on manipule de bons candidats.  

\smallskip
Une approche plus terre à terre, mais qui permet d'étudier les fonctions zêta, est celle de Soulé \cite{So} raffinée ensuite dans les travaux de Connes et Consani \cite{CoCa,CoCa2}. De leur point de vue, un schéma sur $\F_1$ est défini par quelques données de descente d'un $\Z-$schéma de type fini $X_\Z.$ Une variété affine sur $\F_1$ est un triplet $(\underline{X},X_\C,e_X)$ formé d'un foncteur covariant $\underline{X}=\amalg_{k\geq 0}\underline{X}^k:\mathcal{F}_{ab}\longrightarrow \Ens$ de la catégorie des groupes abéliens finis vers la catégorie des ensembles gradués, d'une variété affine $X_\C$ et d'une transformation naturelle entre $\underline{X}$ et le foncteur $D\mapsto \Hom(\Spec\C[D], X_\C).$ À travers d'un caractère $\chi$ qui permet d'interpréter les éléments de $\underline{X}(D)$ comme des points de $X_\C,$ ils requièrent quelques hypothèses fortes garantissant que ces donnés définissent une seule variété affine sur $\Z.$ Très récemment, James Borger a proposée les $\lambda-$anneaux comme la structure codifiant ces données de descente de $\Z$ vers $\F_1$ \cite{Bo}.    

\smallskip

Dans ce mémoire, on aborde les constructions de Durov dans sa thèse \cite{Du}, dans laquelle il a développé tout un cadre algébrique permettant de surmonter les difficultés qui se présentent dès que l'on essaie d'appliquer la théorie d'Arakelov aux variétés non lisses, non propres ou ayant des métriques singulières. Outre qu'obtenir une nouvelle description de la géométrie d'Arakelov, qui pourrait servir à démontrer des analogues arithmétiques du théorème de Riemann-Roch, on propose un cadre suffisamment général pour traiter de la même manière la géométrie algébrique à la Grothendieck, la géométrie tropicale ou la géométrie sur le corps à un élément, juste en faisant varier \textit{l'anneau généralisé} de base sur lequel on travaille. Cela permet notamment de définir d'une façon rigoureuse $\F_1$ et ses extensions, non seulement les structures sur eux, ainsi que de construire la compactification de $\Spec\Z$ comme un pro-schéma généralisé projectif sur $\F_1.$  

\subsection*{Plan du mémoire}

On commence par exposer quelques limitations de l'analogie entre les corps de nombres et le corps de fonctions provenant du fait que $\Spec\Z$ ne soit pas un schéma propre et qu'il n'y ait pas un corps de base chez les entiers. Dans l'exposé, on rencontre trois objets à la recherche d'une définition satisfaisante: le corps à un élément $\F_1,$ l'anneau local à l'infini $\Z_\infty$ et la compactification de ${\Spec\Z}.$ C'est la tâche à laquelle on se consacre dans la section suivante, où l'on voit un anneau $R$ comme la monade sur les ensembles qui fait correspondre à chaque $X$ toutes les $R-$combinaisons linéaires formelles à support fini d'éléments dans $X.$ On extrait deux propriétés importantes de ce type de monades: l'algébricité, qui garantit essentiellement que l'on peut trouver une présentation par des opérations et des relations, et la commutativité, une des nouveautés introduites par Durov. On dit qu'une monade algébrique commutative est un anneau généralisé. Ensuite, on peut définir le spectre d'un anneau généralisé en calquant le cas classique. Dans la section suivante, on construit de nombreux exemples d'anneaux généralisés, parmi lesquels il faut souligner $\N,$ $\Z_\infty,$ $\Z_{(\infty)},$ ainsi que le corps à un élément $\F_1$ et ses extensions $\F_{1^n.}$ On obtient dans chaque cas des descriptions de la catégorie des modules sur ces anneaux. 

\smallskip

La seconde partie du mémoire présente la construction de $\widehat{\Spec\Z}.$ On définit d'abord les schémas de Zariski sur $\F_1$ en suivant le point de vue fonctoriel grothendieckien, d'après lequel un schéma sur $\F_1$ est grosso modo un foncteur de la catégorie des anneaux généralisés vers la catégorie des ensembles qui admet un recouvrement par des ouverts affines. Ensuite, on compactifie $\Spec\Z$ à la main à partir de deux anneaux généralisés $A_N$ et $B_N,$ et l'on obtient une description plus intrinsèque en termes de la construction $\mathrm{Proj}$ des monades graduées. Finalement, on fait le lien avec la géométrie d'Arakelov classique en discutant la compactification des variétés algébriques définies sur $\Q$ et le concept de fibré vectoriel sur $\widehat{\Spec\Z}$. On finit avec une discussion brève de quelques questions ouvertes. Puisque tout au long du travail il y a un usage intensif de la théorie des catégories, on a considéré opportun d'ajouter un annexe où l'on explique d'une façon concise les notions employés, sauf celles de catégorie, foncteur et transformation naturelle, que l'on suppose connues. Les termes définis dans l'annexe sont indiqués avec une petite étoile en haut. On présente de même une bibliographie exhaustive des articles concernant $\F_1.$    

\smallskip

\begin{acknowledgment}
Je tiens à remercier Frédéric Paugam, \textit{princeps functorum}, pour m'avoir proposé ce travail et pour les nombreuses heures qu'il a passé avec moi au tableau. J'ai discuté quelques idées du texte avec Javier López Peña (Queen Mary University of London), auquel je remercie l'envoie du préprint \cite{LoLo}, et avec Peter Arndt (Göttingen), lors de la conférence i-Math School on Derived Algebraic Geometry (Salamanca, juin 2009) où l'on s'est rencontrés par hasard. J'ai aussi bénéficié du cours d'Alain Connes au Collège de France sur la \textit{Thermodynamique des espaces non commutatifs}, où le contenu de \cite{CoCa2} a été présenté pour la première fois, et d'une conférence de Yuri Manin à l'Institut de Mathématiques de Jussieu \cite{Ma3}, dont il m'a envoyé les transparences très gentiment. Une partie de ce travail a fait objet de la conférence \textit{¿Es $\Z$ un anillo de polinomios?} au Séminaire de Géométrie Algébrique de l'Universidad Complutense de Madrid (séance du 26 mai) et de l'exposé court \textit{On Durov's compactification of $\Spec\Z$, the field with one element, the local ring at infinity and other rara avis} dans l'école doctorale en Géométrie Diophantienne tenue à l'Université de Rennes du 15 au 26 juin 2009.  
\end{acknowledgment}

\section{L'analogie entre les corps de nombres et les corps de fonctions}

Depuis les travaux constitutifs de plusieurs visionnaires qui ont rêvé d'une correspondance parfaite entre la géométrie et l'arithmétique (Kummer, Kronecker, Dedekind, Hensel, Hasse, Minkowski, Artin, Weil, etc.), l'analogie entre les corps de nombres et les corps de fonctions est l'une des idées les plus fructueuses de la mathématique moderne. Typiquement, des énoncés simples mais inaccessibles sur les corps de nombres (par exemple, la distribution des zéros des fonctions L ou les conjectures de Langlands) sont transformés en des théorèmes plus techniques dont la démonstration est possible grâce au nouveau cadre géométrique-analytique de la traduction dans les corps de fonctions (dans ce cas, les conjectures de Weil ou la correspondance de Langlands pour $GL_n$ établie par Lafforgue). Un exemple significatif, c'est le théorème de Mason, un exercice facile mettant en relation le degré de trois polynômes avec celui du radical de leur produit, dont l'analogue arithmétique, la \textit{conjecture abc}, a des conséquences qui vont du dernier théorème de Fermat en forme asymptotique et de quelques résultats d'infinitude des nombres premiers  jusqu'au théorème de Mordell-Faltings ou la conjecture de Lang \cite{Ni}.     

\smallskip

On rappel qu'un corps de nombres est une extension finie de $\Q$ et qu'un corps de fonctions est une extension finie $K$ de degré de transcendance 1 sur un corps $k,$ que l'on supposera en général fini ou algébriquement clos. Soit $K/k$ un corps de fonctions. À chaque valuation discrète de $K/k$ (i.e. une application $v:K\longrightarrow \Z\cup\{\infty\}$ telle que $v(xy)=v(x)+v(y),$  $v(x+y)\geq \min\{v(x),v(y)\}$ et que $v(x)=0$ quel que soit $x\in k),$ on associe l'anneau local   
\begin{equation*}
\mathcal{O}_v=\{x\in K: v(x)\geq 0\}. 
\end{equation*} Alors, l'ensemble $C_k$ d'anneaux de valuation discrète de $K/k$ est une courbe algébrique abstraite isomorphe à une courbe algébrique projective lisse $C$ ayant corps de fonctions $k(C)=K.$ Soit $\rho$ un nombre réel plus grand que 1. Étant donné une valuation discrète $v_p,$ si l'on pose $\abs{\cdot}_p=\rho^{-v_p(\cdot)},$ on obtient une norme sur $K.$ On a alors une bijection entre les points fermés de $C$ et les normes sur $K$ triviales sur $k$ à équivalence près (cf. \cite[I.6]{Har} pour les détails).

\smallskip

L'analogie entre les corps de nombres et les corps de fonctions devient plus forte lorsque l'on considère les entiers $\Z$ et les polynômes $k[T]$ sur un corps fini $k=\F_q,$ ainsi que leurs corps de fractions $\Q$ et $k(T).$ Ce sont des domaines euclidiens de dimension de Krull 1, dont les spectres sont formés par
\begin{align*}
&\mathrm{Spec\ }\Z=\{p\Z: \text{p premier}\}\cup \{0\}, \\ \mathbb{A}^1_k:=\mathrm{Spec\ }&k[T]=\{f\in k[T] \text{\ unitaire irréductible}\}.
\end{align*} De plus, si $k=\F_q,$ les corps résiduels $\F_p=\Z/p\Z$ et $\F_q[T]/(f)$ sont finis. D'autre part, si $f\in \mathbb{A}^1_k,$ l'anneau des séries de puissances $Z_f:=\varprojlim k[T]/(f^n)$ et le corps de séries de Laurent $Q_f:=Z_f[\frac{1}{f}]$ ont des équivalents $p-$adiques $\Z_p=\varprojlim \Z/p^n\Z$ et $\Q_p=\Z_p[\frac{1}{p}]$ ayant des plongements denses $\Z\subset \Z_p$ et $\Q\subset \Q_p.$ On peut même regarder les nombres comme fonctions sur $\mathrm{Spec\ }\Z,$ où l'image du premier $p$ par l'entier $n$ est la classe de congruence de $n$ modulo $p.$ Notons que l'espace d'arrivée de cette fonction dépend essentiellement du point où elle est évaluée.      

\smallskip

Quelles sont les limites de cette analogie? Le premier problème qui se pose, c'est l'existence d'un équivalent de la droite projective $\mathbb{P}^1_k:=\mathbb{A}^1_k \amalg_{\mathbb{G}_m}\mathbb{A}^1_k$ sur un corps de nombres. En effet, pour avoir la correspondance citée ci-dessus dans le cas plus simple $k(T)=k(\mathbb{P}^1_k),$ c'est nécessaire de passer au projective car l'on la norme
\begin{equation*}
\abs{\frac{f(T)}{g(T)}}=\rho^{\deg f-\deg g}
\end{equation*} correspondant au point de l'infini de $\mathbb{P}^1_k.$ Cela nous permet notamment d'avoir des formules produit comme
\begin{equation*}
\prod_{p\in\mathbb{P}^1_k, \ p\neq \xi}\abs{f}_p=1 \quad \forall f \in K^\ast,
\end{equation*} où $\xi$ est le point générique de $\mathbb{P}^1_k.$ Ces formules sont extrêmement importantes en géométrie diophantienne, où elles permettent, par exemple, de définir la hauteur d'un point d'un espace projectif en choisissant un représentant quelconque \cite{Bom}.  

\smallskip

En ce qui concerne les rationnels, les normes sur $\Q$ sont classifiées par le théorème d'Ostrowski, d'après lequel toute norme non-triviale est équivalente soit à une norme $p$-adique $\abs{\cdot}_p$ soit à la valeur absolue archimédienne usuelle. On rappel que $\abs{x}_p=p^{-v_p(x)},$ $v_p(x)$ étant le seul entier tel que $x=p^{v_p(x)}\frac{a}{b}$ avec $\pgcd(p,ab)=1,$ vérifie la propriété ultramétrique 
\begin{equation*}
\abs{x+y}_p\leq\max\{\abs{x}_p,\abs{y}_p\}. 
\end{equation*} On a de nouveau
\begin{equation*}
\prod_{(p)\in \Spec\Z, \ p\neq 0} \abs{x}_p=\frac{1}{\abs{x}_\infty} \quad \forall x \in \Q^\ast,
\end{equation*} ce qui suggère qu'il faut penser à une compactification $\widehat{\Spec\Z}$ pour avoir des résultats analogues aux théorèmes sur les corps de fonctions. C'est le point de vue de la théorie d'Arakelov comme développée par exemple dans \cite{La} ou \cite{SABK}. 

\smallskip

Cette brisure de la symétrie entre les places archimédiennes et non archimédiennes est mise en évidence par la nécessité d'introduire un facteur à l'infini afin d'obtenir l'équation fonctionnelle de la zêta de Riemann. En effet, selon l'interprétation de Tate \cite{Ta}, le produit d'Euler des facteurs locaux
\begin{equation*}
L_p(s)=\int_{\mathbb{Q}_p^\ast} \mathbbm{1}_{\mathbb{Z}_p}(x)\abs{x}_p^s \ d^\ast x=\frac{1}{1-\frac{1}{p^s}},
\end{equation*} où $d^\ast x=\frac{1}{1-\frac{1}{p^s}}\frac{dx}{\abs{x}_p}$ est la mesure de Haar sur $\mathbb{Q}_p,$ ne donne une équation fonctionnelle que lorsque l'on le complète avec un facteur à l'infini
\begin{equation*}
L_\infty(s)=\int_{\mathbb{R}^\ast} e^{-\pi x^2} \abs{x}_\infty^s \  \frac{dx}{\abs{x}},
\end{equation*} la gaussienne étant l'analogue mystérieux de la fonction indicatrice de $\Z_p$ dans $\Q_p.$    

\smallskip
Une autre difficulté qui se présente, c'est la construction des anneaux de valuation d'un corps de nombres. Pour $p$ premier, $\mathbb{Z}_p=\{x\in \mathbb{Q}_p: \abs{x}_p\leq 1\}$ est bien défini en vertu de la propriété ultramétrique, mais ce que jouerai le rôle d'anneau local à l'infini
\begin{equation*}
\mathcal{O}_{v_\infty}:=\mathbb{Z}_\infty=\{x \in \mathbb{R}=\mathbb{Q}_\infty: \abs{x}\leq 1\}=[-1,1] 
\end{equation*} n'est pas fermé sous l'addition. L'idée sera de la remplacer par des combinaisons linéaires convexes dans un cadre moins restrictif que l'algèbre commutative usuelle. Dans cette catégorie d'anneaux généralisés que l'on va construire dans la section suivante, $\Z_\infty$ aura le même status que les anneaux classiques.     

\smallskip
On peut attendre que cette façon d'\textit{envelopper les problèmes par le haut} jette une nouvelle lumière sur la manque de produits limitant l'analogie entre les corps de nombres et les corps de fonctions. Tandis qu'en géométrie on peut définir l'espace affine comme le produit de $n$ copies de la droite affine, ce qui revient à considérer le spectre du coproduit$^\star$ des $k-$algèbres $k[X_1,\ldots,X_n]=k[X]\otimes_k\ldots\otimes_k k[X],$ chaque tentative de définir une surface arithmétique ne donne pas de résultats. En effet, $\Z$ étant l'objet initial de la catégorie des anneaux, $\Spec \Z \times \Spec \Z$ est réduit à la diagonale. Cet obstacle est lié traditionnellement à l'absence d'un corps de base chez les entiers, sur lequel on pourrait prendre des produits $C \otimes_{\F_1} C$ comme celui dont Weil s'est servi dans sa démonstration de l'hypothèse de Riemann pour les courbes sur un corps fini. Dans la suite, on montrera que dans la catégorie des anneaux généralisés, l'élusive corps à un élément admet une définition naturelle. Malgré cela, le produit de deux copies de $\Spec \Z$ est encore trivial sur cette nouvelle catégorie.     


\section{La catégorie des anneaux généralisés}\label{int}

On rappel que la classe de toutes les catégories $\mathfrak{Cat}$ est elle même une $2-$catégorie, ce qui entraîne notamment que l'ensemble des foncteurs entre deux catégories quelconques est de nouveau une catégorie, dont les morphismes sont donnés par les transformations naturelles. Lorsque les deux catégories coïncident, la composition usuelle munit les endofoncteurs $\End(\mathcal{C}):=\Hom_\mathfrak{Cat}(\mathcal{C},\mathcal{C})$ d'une structure de catégorie monoïdale$^\star.$ Si $F$ et $G$ sont des endofoncteurs, on pose $FG:=F\otimes G$ et $F^n:=F\otimes\cdots\otimes F,$ ce qui est bien défini grâce à l'associativité. On appelle monade sur $\mathcal{C}$ toute algèbre dans $\End(\mathcal{C})$ qui soit compatible avec la structure de catégorie monoïdale. Plus explicitement:   

\begin{definition}
Une \textit{monade} sur une catégorie $\mathcal{C}$ est un triplet $\Sigma=(\Sigma, \mu, \epsilon)$ formé d'un endofoncteur $\Sigma: \mathcal{C} \longrightarrow \mathcal{C},$ d'une multiplication $\mu: \Sigma^2 \longrightarrow \Sigma$ et d'un morphisme identité $\epsilon: \id_\mathcal{C} \longrightarrow \Sigma$ tels que  
\begin{enumerate}
\item $\mu$ et $\epsilon$ sont des transformations naturelles, i.e. pour tout couple d'objets $X, Y$ dans $\mathcal{C}$ et tout morphisme $f: X \longrightarrow Y,$ les diagrammes suivants commutent
\begin{equation*}
\xymatrix{
& \Sigma^2(X) \ar[d]_{\Sigma^2(f)} \ar[r]^{\mu_X}
& \Sigma(X) \ar[d]^{\Sigma(f)} & &  X \ar[d]_f \ar[r]^{\epsilon_X} & \Sigma(X) \ar[d]^{\Sigma(f)} \\
& \Sigma^2(Y) \ar[r]^{\mu_Y} & \Sigma(Y) & &  Y  \ar[r]^{\epsilon_Y} & \Sigma(Y)} 
\end{equation*}
\item $\mu$ et $\epsilon$ respectent les axiomes d'associativité et d'unité, autrement dit, les diagrammes suivants sont commutatifs pour tout objet $X$ dans $\mathcal{C}$
\begin{equation*}
\xymatrix{
& \Sigma^3(X) \ar[r]^{\Sigma(\mu_X)} \ar[d]_{\mu_{\Sigma(X)}}
& \Sigma^2(X) \ar[d]^{\mu_X} & \id_\mathcal{C}\Sigma(X) \ar[r]^{\epsilon_{\Sigma(X)}} \ar[dr]_{\id_{\Sigma(X)}} & \Sigma^2(X) \ar[d]^{\mu_X} & \Sigma \id_\mathcal{C} (X) \ar[l]_{\Sigma(\epsilon_X)} \ar[dl]^{\id_{\Sigma(X)}} \\
& \Sigma^2 (X) \ar[r]^{\mu_X} & \Sigma(X) & &  \Sigma(X) & },
\end{equation*} 
\end{enumerate} Un \textit{morphisme de monades} $\varphi: \Sigma_1 \longrightarrow \Sigma_2$ est une transformation naturelle des endofoncteurs sous-jacents $\Sigma_1$ et $\Sigma_2$ telle que 
\begin{align}\label{morphmonad}
&\varphi_Y\circ\Sigma_1(Y)=\Sigma_2(f)\circ\varphi_X, \quad  \varphi_X\circ\epsilon_{\Sigma_{1, X}}=\epsilon_{\Sigma_{2,X}} \ \ \text{et que}\nonumber \\
\mu_{\Sigma_2,X}\circ&\Sigma_2(\varphi_X)\circ\varphi_{\Sigma_1(X)}=\varphi_X\circ\mu_{\Sigma_1,X}=
\mu_{\Sigma_2,X}\circ\varphi_{\Sigma_2(X)}\circ\Sigma(\varphi_X)
\end{align} pour tout couple d'objets $X,Y \in \text{Ob}(\mathcal{C})$ et tout morphisme $f:X \longrightarrow Y.$ On obtient ainsi la catégorie $\mathfrak{Monades}(\mathcal{C})$ des monades sur $\mathcal{C}.$ 
\end{definition}

\smallskip

\begin{exemple}
La catégorie $\mathfrak{Monades}(\Ens)$ admet un objet initial qui sera essentiel dans la suite. C'est le foncteur identité $\id_\Ens$ muni des applications $\mu$ et $\epsilon$ évidentes. On le note $\mathbb{F}_{\emptyset}$ et on l'appelle \textit{le corps sans éléments}. Les monades sur les ensembles possèdent aussi un objet final $\textbf{1}$ qui vaut $\{0\}$ sur tout ensemble $X$ et qui va jouer le rôle de l'anneau trivial dans la construction des anneaux généralisés. 
\end{exemple}

\smallskip

Étant donnée une monade $\Sigma=(\Sigma, \mu, \epsilon)$ sur $\mathcal{C},$ on peut lui associer de façon naturelle une notion de sous-structure et de module sur elle: 
\begin{definition}
Une \textit{sous-monade} $\Sigma'$ de $\Sigma$ est un sous-foncteur$^{\star}$ $\Sigma' \subset \Sigma$ stable par $\mu$ et $\epsilon,$ i.e. une monade $\Sigma'$ telle que
\begin{enumerate}
\item $\Sigma'(X) \subset \Sigma(X)$ pour tout objet $X$ dans $\mathcal{C}.$ 
\item la transformation naturelle $\Sigma'\longrightarrow \Sigma$ dont les composantes sont données par l'inclusion est un morphisme de monades.  
\end{enumerate}
\end{definition}

\smallskip

Si l'on se donne deux sous-monades $\Sigma_1$ et $\Sigma_2$ de la même monade $\Sigma,$ leur intersection est définie comme le produit fibré$^\star$ $\Sigma_1\times_\Sigma \Sigma_2$ dans la catégorie $\End(\mathcal{C}).$ Plus terre à terre, $\Sigma_1 \cap \Sigma_2$ est la monade dont l'image de chaque ensemble $X$ est l'intersection $\Sigma_1(X)\cap \Sigma_2(X)$ et dont les composantes des morphismes d'identité et de multiplication sont les restrictions de $\mu_X$ et $\epsilon_X$ à $\Sigma_1(X)\cap \Sigma_2(X).$ 

\smallskip

\begin{definition}
Un \textit{module} sur $\Sigma$ est la donné d'un couple $M=(M,\alpha)$ formé d'un objet $M \in \text{Ob}(\mathcal{C})$ et d'un morphisme $\alpha: \Sigma(M) \longrightarrow M$ tels que $\alpha\circ\mu_M=\alpha\circ\Sigma(\alpha)$ et que $\alpha \circ \epsilon_M=\id_M.$ Un \textit{morphisme de $\Sigma-$modules} $f: (M,\alpha_M) \longrightarrow (N,\alpha_N)$ est un morphisme $f \in \text{Hom}_\mathcal{C}(M,N)$ tel que $f\circ\alpha_M=\alpha_N\circ \Sigma(f).$ On construit ainsi la catégorie $\mathcal{C}^{\Sigma}$ des $\Sigma-$modules, qui peut être pensée plus intrinsèquement comme l'objet qui représente le foncteur $\Hom_\mathfrak{Cat}(\cdotp,\mathcal{C})^{\Sigma}$ dans $\mathfrak{Cat}.$ 
\end{definition}

\begin{definition} Soit $\varphi: \Sigma_1 \longrightarrow \Sigma_2$ un morphisme de monades sur une catégorie $\mathcal{C}.$ On appelle \textit{restriction des scalaires par rapport à $\varphi$} le foncteur $\varphi^\ast: \mathcal{C}^{\Sigma_2}\longrightarrow \mathcal{C}^{\Sigma_1}$ qui fait correspondre à un $\Sigma_2-$module $(N,\alpha)$ le $\Sigma_1-$module $(N,\alpha')$ ayant le même objet sous-jacent et dont l'action de $\Sigma_1$ est définie par $\alpha'=\alpha\circ\varphi_N.$ Lorsque ce foncteur admet un adjoint à gauche$^\star,$ on obtient aussi une notion d'\textit{extension des scalaires}.   
\end{definition}

\smallskip

Le foncteur d'oubli $\underline{O}: \mathcal{C}^{\Sigma} \longrightarrow \mathcal{C},$ qui associe à chaque $\Sigma-$module $(M,\alpha)$ l'objet sous-jacent $M,$ admet un adjoint à gauche, que l'on appellera comme d'habitude foncteur libre. C'est simplement $\underline{L}:\mathcal{C} \longrightarrow \mathcal{C}^{\Sigma}$ défini par la formule $\underline{L}(X)=(\Sigma(X),\mu_X)$ pour tout $X \in \text{Ob}(\mathcal{C}).$ La proposition suivante montre que ces deux foncteurs encodent toute la structure de la monade.

\begin{proposition}
Une monade $\Sigma$ sur $\mathcal{C}$ est entièrement déterminée par le couple de foncteurs adjoints $\underline{L}:\mathcal{C} \longrightarrow \mathcal{C}^{\Sigma}$ et $\underline{O}:\mathcal{C}^{\Sigma} \longrightarrow \mathcal{C}.$ 
\end{proposition}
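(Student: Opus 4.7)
The plan is to reconstruct the three pieces of data defining the monad $\Sigma = (\Sigma, \mu, \epsilon)$ from the adjunction $\underline{L} \dashv \underline{O}$, and then to check that they indeed satisfy the axioms. First I would recover the underlying endofunctor simply as the composition $\Sigma = \underline{O} \circ \underline{L}: \mathcal{C} \longrightarrow \mathcal{C}$, which makes sense since $\underline{L}(X) = (\Sigma(X), \mu_X)$ and $\underline{O}$ is the forgetful functor. The unit $\epsilon: \id_{\mathcal{C}} \longrightarrow \underline{O}\,\underline{L} = \Sigma$ is then nothing but the unit of the adjunction, which I would identify componentwise with the original $\epsilon_X$ by the universal property of $(\Sigma(X), \mu_X)$ as the free $\Sigma$-module on $X$.

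The key point is the reconstruction of the multiplication. I would use the counit $\eta: \underline{L}\,\underline{O} \longrightarrow \id_{\mathcal{C}^{\Sigma}}$ of the adjunction, whose component at a $\Sigma$-module $(M,\alpha)$ is precisely the structural morphism $\alpha: \Sigma(M) \to M$ (again by the universal property of the free module). Applying $\underline{O}$ and precomposing with $\underline{L}$ produces a natural transformation $\underline{O}\,\eta\,\underline{L}: \underline{O}\,\underline{L}\,\underline{O}\,\underline{L} \longrightarrow \underline{O}\,\underline{L}$, i.e.\ $\Sigma^2 \longrightarrow \Sigma$. Evaluating at $X$, the counit component at the free module $\underline{L}(X) = (\Sigma(X), \mu_X)$ is by definition $\mu_X$, so we recover the original multiplication.

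It then remains to verify that $(\Sigma, \mu, \epsilon)$ so obtained satisfies the axioms of a monad. The associativity of $\mu$ and the unit axioms follow from the triangle identities of the adjunction $\underline{L} \dashv \underline{O}$ together with the naturality of $\eta$ and $\epsilon$; this is a routine diagram chase which I would not spell out. The main conceptual step — and the one requiring care — is the identification of $\eta_{(M,\alpha)}$ with $\alpha$: for this one checks that the adjunction isomorphism
\begin{equation*}
\Hom_{\mathcal{C}^{\Sigma}}\bigl((\Sigma(M),\mu_M),(M,\alpha)\bigr) \;\cong\; \Hom_{\mathcal{C}}(M,M)
\end{equation*}
sends $\id_M$ on the right to $\alpha$ on the left, which is exactly the content of the module axiom $\alpha \circ \epsilon_M = \id_M$ combined with the compatibility $\alpha \circ \mu_M = \alpha \circ \Sigma(\alpha)$. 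Once this is established, the uniqueness of the data $(\Sigma, \mu, \epsilon)$ follows because every step of the reconstruction is forced by the adjunction, so any two monads giving rise to the same adjoint pair $\underline{L}, \underline{O}$ must coincide.
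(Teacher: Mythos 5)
Your proof is correct and follows essentially the same route as the paper: recover $\Sigma=\underline{O}\,\underline{L}$, take $\epsilon$ to be the unit of the adjunction, and define $\mu=\underline{O}\star\eta\star\underline{L}$, identifying the counit component at the free module $(\Sigma(X),\mu_X)$ with $\mu_X$. You spell out a bit more (the identification of $\eta_{(M,\alpha)}$ with $\alpha$ and the verification of the monad axioms via the triangle identities), but the argument is the same as the paper's.
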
  
\begin{proof}
On se réfère à l'annexe pour la définition du produit de transformations naturelles. Si l'on considère l'unité et la co-unité de l'adjonction de $\underline{L}$ et $\underline{O},$ qui sont des transformations naturelles $\xi: \text{id}_\mathcal{C} \longrightarrow \underline{O}\underline{L}$ et $\eta: \underline{L}\underline{O} \longrightarrow \text{id}_{\mathcal{C}^\Sigma}$, on a
\begin{equation*}
\Sigma=\underline{O}\underline{L}, \quad \epsilon=\xi, \quad \mu=\underline{O}\star\eta\star\underline{L}.
\end{equation*} En effet, on a bien que $\mu: \underline{OLOL}=\Sigma^2 \longrightarrow \underline{OL}=\Sigma$ et pour tout objet $X$ dans $\mathcal{C}:$
\begin{equation*}
(\underline{O}\star\eta\star\underline{L})_X=\underline{O}(\eta_{\underline{L}(X)})=
\underline{O}(\eta_{(\Sigma(X),\mu_X)})=\underline{O}(\mu_X)=\mu_X. \qedhere
\end{equation*} 
\end{proof}

\smallskip

Cette construction admet une généralisation immédiate fournissant de nombreux exem-ples de monades. Supposons que $F: \mathcal{C} \longrightarrow \mathcal{D}$ et $G: \mathcal{D} \longrightarrow \mathcal{C}$ sont des foncteurs adjoints, avec des transformations $\xi: \text{id}_\mathcal{C} \longrightarrow GF$ et $\eta: FG \longrightarrow \text{id}_\mathcal{D}.$ Alors, si l'on pose $\Sigma=GF, \epsilon=\xi$ et $\mu=F\star\eta\star G,$ il est facile de vérifier que le triplet $(\Sigma,\epsilon,\mu)$ définit une monade sur $\mathcal{C}.$ Lorsque $\mathcal{C}=\mathfrak{Ens},$ $\mathcal{D}$ est une certaine catégorie d'ensembles munis de structure algébrique et $G: \mathcal{D} \longrightarrow \mathfrak{Ens}$ est le foncteur d'oubli, on obtiendra des monades dans la catégorie des ensembles, comme celle qui suit:   

\smallskip

\begin{exemple} [La monade des mots]
Soit $\mathcal{D}=\mathfrak{Mon}$ la catégorie dont les objets sont les monoïdes (i.e. les semi-groupes avec identité) et dont les morphismes sont les morphismes de semi-groupes qui préservent l'identité. Dans ce cas, le foncteur d'oubli vers les ensembles $\underline{O}:\mathfrak{Mon} \longrightarrow \mathfrak{Ens}$ admet comme adjoint à gauche le foncteur libre $\underline{L}:\mathfrak{Ens} \longrightarrow \mathfrak{Mon},$ qui associe à chaque ensemble $X$ le monoïde libre $L(X),$ ceci étant formé par les mots de longueur fini $x_1\ldots x_n,$ dans l'alphabet $X$ et ayant comme multiplication la concaténation des mots et comme identité le mot vide. On obtient ainsi une monade $M=(M, \mu, \epsilon)$ sur la catégorie des ensembles que l'on appellera la \textit{monade des mots}. On donne ensuite une description plus explicite. 

Si $X \in \text{Ob}(\mathfrak{Ens}),$ on identifie $X^0$ au mot vide et $X^n$ à l'ensemble des mots de longueur $n:$ $\{x_1\}\{x_2\}\ldots\{x_n\}.$ Alors $M(X)=\coprod_{n\geq 0}X^n.$ La correspondance entre $X$ et les mots de longueur un définit le morphisme d'unité $\epsilon_X(x)=\{x\},$ et le morphisme de multiplication $\mu_X: M^2(X) \longrightarrow M(X)$ agit de la façon suivante:
\begin{equation*}
\mu_X(\{\{x_1\}\ldots\{x_n\}\}\ldots\{\{z_1\}\ldots\{z_m\}\})=
\{x_1\}\ldots\{x_n\}\ldots\{z_1\}\ldots\{z_m\}
\end{equation*} La preuve que $M=(M,\mu,\epsilon)$ est en fait une monade se réduit donc à la vérification de l'associativité de l'opération "enlever les accolades". 
\end{exemple} 

\subsection{La monade associée à un anneau} \label{mona}
Étant donné un anneau $R,$ que l'on suppose unitaire mais pas forcément commutatif, l'adjonction du foncteur d'oubli $\underline{O}_R: R-\mathfrak{Mod} \longrightarrow \mathfrak{Ens}$ de la catégorie des $R-$modules vers les ensembles et du foncteur libre $\underline{L}_R: \mathfrak{Ens}\longrightarrow R-\mathfrak{Mod}$ définit une monade $\Sigma_R=(\Sigma_R,\mu,\epsilon)$ sur $\mathfrak{Ens}.$ Pour chaque $X,$ 
\begin{equation*}
\Sigma_R(X)=\text{Hom}_{\mathfrak{Ens}}^{\text{fini}}(X,\underline{O}_R(R))
\end{equation*}
est l'ensemble des applications de $X$ dans $R$ à support fini, qui s'identifie à l'ensemble des $R-$combinaisons linéaires formelles
\begin{equation*}
\Sigma_R(X)=\{\lambda_1\{x_1\}+\ldots+\lambda_n\{x_n\}: \lambda_i \in R, x_i \in X, n\geq 0\}.
\end{equation*} Ensuite, on définit le morphisme d'unité comme dans le cas de la monade des mots, i.e. $\epsilon_X(x)=\{x\}$ pour tout $x \in X,$ et la multiplication comme la transformation naturelle de $\Sigma_R^2$ dans $\Sigma_R$ dont les composantes calculent les combinaisons linéaires des combinaisons linéaires formelles. Autrement dit:
\begin{equation*}
\mu_X(\sum_i\lambda_i\{\sum_j \mu_{ij}\{x_j\}\})=\sum_{i,j}\lambda_i\mu_{ij}\{x_j\}
\end{equation*} 

\smallskip

Si $(X,\alpha)$ est un $\Sigma_R-$module, l'anneau $R$ agit sur l'ensemble $X$ au moyen des opérations $x+y:=\alpha(\{x\}+\{y\})$ et $\lambda x:=\alpha(\lambda\{x\}),$ qui vérifient bien les axiomes de $R-$module. Réciproquement, si l'on se donne un $R-$module $X,$ l'application qui évalue chaque combinaison linéaire formelle $\lambda_1\{x_1\}+\ldots+\lambda_n\{x_n\} \in \Sigma_R(X)$ en $\lambda_1 x_1+\ldots+\lambda_n x_n \in X$ définit une structure de $\Sigma_R-$module sur $X.$ Il est facile de montrer que cette correspondance est en fait une équivalence des catégories $R-\mathfrak{Mod}$ et $\mathfrak{Ens}^{\Sigma_R},$ ce qui revient à dire que le foncteur $\underline{O}_R$ est monadique. 
  
\smallskip

\begin{proposition}\label{prop2} Soient $R$ un anneau unitaire et $\Sigma_R$ la monade sur les ensembles construite comme ci-dessus. Alors:
\begin{enumerate}
\item Si $S$ est un sous-anneau de $R,$ alors $\Sigma_S$ est une sous-monade de $\Sigma_R.$
\item $\Sigma_R$ commute avec les limites inductives filtrées$^\star$. 
\item La structure de monoïde multiplicatif de $R$ peut être récupérée à partir de $\Sigma_R.$
\item L'application $R \mapsto \Sigma_R$ est un foncteur pleinement fidèle$^\star$ de la catégorie des anneaux unitaires dans la catégorie des monades. De plus, il préserve les suites exactes. 
\end{enumerate}
\end{proposition}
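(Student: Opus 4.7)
Le plan consiste à traiter les quatre assertions séparément, en exploitant à chaque fois la description explicite de $\Sigma_R(X)$ comme l'ensemble des combinaisons $R$-linéaires formelles à support fini d'éléments de $X$ donnée dans la section \ref{mona}.

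Pour (1), je vérifierais que l'inclusion $\Sigma_S(X) \subset \Sigma_R(X)$ induite par $S \subset R$ est une transformation naturelle compatible avec $\mu$ et $\epsilon.$ C'est immédiat sur les formules explicites, puisque les sommes et produits intervenant dans $\mu_X$ restent dans $S$ lorsque tous leurs arguments y appartiennent, et $\epsilon_X$ envoie $x$ sur $1_S\{x\} = 1_R\{x\}.$ Pour (2), j'utiliserais l'argument standard selon lequel une combinaison formelle finie dans $\Sigma_R(\varinjlim X_i)$ a un support fini qui, grâce au caractère filtré du système, se relève dans un même $X_j;$ on en tire la bijection $\varinjlim \Sigma_R(X_i) \cong \Sigma_R(\varinjlim X_i).$

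Pour (3), j'identifierais $\Sigma_R(\{*\})$ à $R$ via $\lambda\{*\} \leftrightarrow \lambda.$ Sous cette bijection, $\epsilon_{\{*\}}(*)$ correspond à $1_R,$ et la multiplication de $R$ se retrouve comme la restriction de $\mu_{\{*\}}:\Sigma_R(R)\to R$ aux éléments de la forme $\lambda\{\mu\{*\}\},$ qui par la formule explicite vaut $\lambda\mu\{*\}.$ Le monoïde multiplicatif $(R,\cdot,1)$ est ainsi entièrement encodé par la structure de monade.

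Pour (4), je construirais le foncteur en envoyant un morphisme d'anneaux $f: R \to S$ sur la transformation $\Sigma_f$ définie coefficient par coefficient, $(\Sigma_f)_X(\sum \lambda_i\{x_i\}) = \sum f(\lambda_i)\{x_i\};$ les axiomes (\ref{morphmonad}) se vérifient directement. La fidélité découle de ce que $f$ se retrouve comme $(\Sigma_f)_{\{*\}}$ sous l'identification de (3). Pour montrer que le foncteur est plein, étant donné un morphisme de monades $\varphi: \Sigma_R \to \Sigma_S,$ je poserais $f = \varphi_{\{*\}}: R \to S;$ la compatibilité de $\varphi$ avec $\epsilon$ donne $f(1_R) = 1_S,$ et celle avec $\mu,$ appliquée aux éléments $\lambda\{\mu\{*\}\}$ et $\{\lambda\{*\}\} + \{\mu\{*\}\}$ de $\Sigma_R^2(\{*\}),$ donne respectivement la multiplicativité et l'additivité. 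L'obstacle principal sera ce dernier point: il faut exprimer $x+y$ dans $R$ comme $\mu_{\{*\}}(\{x\{*\}\} + \{y\{*\}\})$ pour pouvoir appliquer la troisième compatibilité de (\ref{morphmonad}). Enfin, la préservation des suites exactes $0 \to R_1 \to R_2 \to R_3 \to 0$ découlera de l'isomorphisme $\Sigma_R(X) \cong R^{(X)}$ (somme directe), qui est un foncteur exact en $R$ au niveau des groupes abéliens sous-jacents.
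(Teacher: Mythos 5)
Your proposal is correct and follows essentially the same route as the paper: the explicit description of $\Sigma_R(X)$ as finitely supported $R$-valued maps, the coefficient-wise definition of $\Sigma_f$, and the recovery of a ring morphism from the component of a monad morphism at the one-point set. You actually go one step further than the paper on fullness, which stops at ``il suffit de montrer que $\zeta$ respecte aussi l'addition'': your idea of applying the $\mu$-compatibility to the element $\{x\{*\}\}+\{y\{*\}\}\in\Sigma_R^2(\textbf{1})$, whose image under $\mu_{\textbf{1}}$ is $(x+y)\{*\}$, is precisely the computation the paper leaves implicit.
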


\begin{proof} La première assertion découle simplement du fait qu'un sous-anneau $S$ de $R$ contient l'unité et il est fermé pour le produit. Par conséquent, $\Sigma_S$ est compatible avec $\epsilon$ et $\mu,$ donc c'est une sous-monade de $\Sigma_R.$ Pour prouver la seconde affirmation, soit $X=\varinjlim_{I}X_i$ la limite du système  inductive filtré $(X_i)_{i\in I}.$ On a par définition $\Hom_\Ens(X,Z)=\varinjlim_{I}\Hom_\Ens(X_i,Z)$ pour tout ensemble $Z.$ Notamment, lorsque l'on prend $Z=\underline{O}_R(R)$ et on se restreint aux applications à support fini, on obtient $\Sigma_R(X)=\varinjlim_{I}\Sigma_R(X_i).$

Montrons dans ce qui reste la nature des relations entre un anneau $R$ et sa monade associée $\Sigma_R.$ Soit $\textbf{1}=\{1\}$ l'objet final de $\mathfrak{Ens}.$ Alors $\Sigma_R(\textbf{1})=\{\lambda\{1\}: \lambda \in R\}$ est l'ensemble sous-jacent à l'anneau, que l'on note $\abs{\Sigma_R}.$ On peut le voir comme un module à gauche sur lui même par l'égalité $R_s:=\underline{L}_R(\textbf{1}),$ ce qui nous permet de récupérer la multiplication de $R.$ Notons que l'unité est l'image $\epsilon_\textbf{1}(1)\in\abs{\Sigma_R}.$  

Maintenant, si l'on considère un morphisme d'anneaux $f: R \longrightarrow S,$ la famille d'applications ensemblistes $\Sigma_{f,X}: \Sigma_R(X) \longrightarrow \Sigma_R(S)$ définies par 
\begin{equation*}
\sum_{i=1}^n\lambda_i\{x_i\} \longmapsto \sum_{i=1}^n f(\lambda_i)\{x_i\} 
\end{equation*} induit un morphisme $\Sigma_f$ entre les monades $\Sigma_R$ et $\Sigma_S.$ En effet, la première condition de compatibilité dans (\ref{morphmonad}) se vérifie automatiquement, la seconde équivaut à dire que $f(e_R)=e_S$ et la dernière découle du fait que $f(x+y)=f(x)+f(y)$ et que $f(xy)=f(x)f(y).$ Donc, $\Sigma_f$ est un morphisme de monades. 

Pour prouver que ce foncteur est pleinement fidèle, il suffit de montrer que, étant donnés deux anneaux $R$ et $S,$ tous les morphismes de monades $\zeta: \Sigma_R \longrightarrow \Sigma_S$ sont de la forme $\zeta=\Sigma_f$ pour un unique morphisme d'anneaux $f: R \longrightarrow S.$ En effet, on sait déjà que si $\zeta: \Sigma_R \longrightarrow \Sigma_S$ est un morphisme de monades, $f:=\abs{\zeta}$ est un morphisme entre les monoïdes sous-jacents $R$ et $S.$ Il suffit de montrer que $\zeta: R \longrightarrow S$ respecte aussi l'addition et que $\Sigma_{f,X}=\zeta_X$ pour chaque ensemble $X$.      
\end{proof}

\begin{corollaire}
La catégorie des anneaux unitaires $\mathfrak{Ann}$ s'identifie à une sous-catégorie pleine$^\star$ de $\mathfrak{Monades}.$
\end{corollaire}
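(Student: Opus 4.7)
Le plan est essentiellement de r�colter les fruits de la proposition pr�c�dente. En effet, le point (4) de la Proposition \ref{prop2} a d�j� �tabli que l'application $R \mapsto \Sigma_R$ d�finit un foncteur $\Sigma_\bullet: \mathfrak{Ann} \longrightarrow \mathfrak{Monades}(\mathfrak{Ens})$ qui est pleinement fid�le : la fid�lit� d�coule de ce que tout morphisme d'anneaux $f:R \longrightarrow S$ se retrouve comme la composante $\abs{\Sigma_f}$ sur l'objet final $\textbf{1}$, et la pleinitude du fait, �galement d�montr� dans la proposition, que tout morphisme de monades $\zeta:\Sigma_R \longrightarrow \Sigma_S$ est de la forme $\Sigma_f$ pour un unique morphisme d'anneaux $f=\abs{\zeta}$.

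Il ne reste donc qu'� invoquer un lemme cat�gorique g�n�ral et standard : si $F:\mathcal{A} \longrightarrow \mathcal{B}$ est un foncteur pleinement fid�le, alors son image essentielle forme une sous-cat�gorie pleine de $\mathcal{B}$, et $F$ induit une �quivalence entre $\mathcal{A}$ et cette image. Je d�finirais explicitement $\mathfrak{Ann}'$ comme la sous-cat�gorie de $\mathfrak{Monades}(\mathfrak{Ens})$ dont les objets sont les monades isomorphes � un $\Sigma_R$ pour un certain anneau unitaire $R$, et dont les morphismes sont \emph{tous} les morphismes de monades entre de tels objets. La pleinitude de $\Sigma_\bullet$ assure pr�cis�ment que ces morphismes co�ncident avec les images des morphismes d'anneaux, ce qui est le contenu de la pleine sous-cat�gorialit�.

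La seule petite v�rification compl�mentaire que je ferais est la suivante : deux anneaux $R$ et $S$ qui donnent lieu � des monades isomorphes $\Sigma_R \simeq \Sigma_S$ sont eux-m�mes isomorphes. Cela r�sulte imm�diatement de ce que l'on peut r�cup�rer l'anneau sous-jacent par la formule $\abs{\Sigma_R}=\Sigma_R(\textbf{1})$, d�j� utilis�e dans la preuve pr�c�dente, et que l'isomorphisme de monades induit, par �valuation sur $\textbf{1}$, un isomorphisme d'anneaux $\abs{\zeta}:R \longrightarrow S$. Il n'y a donc aucun obstacle s�rieux : tout le travail conceptuel a d�j� �t� fait dans la Proposition \ref{prop2}, et le corollaire n'est qu'une reformulation en langage cat�gorique du fait de pleine fid�lit�.
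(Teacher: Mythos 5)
Votre argument est correct et co\"incide avec celui du papier: le corollaire y est \'enonc\'e sans d\'emonstration s\'epar\'ee pr\'ecis\'ement parce qu'il d\'ecoule imm\'ediatement du point (4) de la Proposition \ref{prop2} (pleine fid\'elit\'e de $R \mapsto \Sigma_R$) via le fait cat\'egorique standard que vous invoquez. La v\'erification compl\'ementaire que vous ajoutez (r\'ecup\'eration de $R$ \`a partir de $\abs{\Sigma_R}=\Sigma_R(\textbf{1})$) est un raffinement bienvenu mais non indispensable \`a l'\'enonc\'e tel qu'il est formul\'e.
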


\begin{remarque}
Même si l'on vient de montrer que tout sous-anneau de $R$ fournit une sous-monade de $\Sigma_R,$ il n'y a pas une correspondance bijective entre les sous-anneaux de $R$ et les sous-monades de $\Sigma_R.$ Si l'on prend, par exemple, $R=\mathbb{Z},$ il n'existe pas de sous-anneaux non-isomorphes à $\Z$, mais par contre on peut construire la sous-monade $\N,$ qui n'est pas isomorphe à $\Z$ (voir \ref{sousm}).  
\end{remarque}

\begin{remarque} Si l'on calcule le foncteur de restriction des scalaires par rapport au morphisme $\Sigma_f$ introduit ci-dessus, on obtient le foncteur usuel de $S-\mathfrak{Mod}$ vers $R-\mathfrak{Mod}$ qui à chaque $S-$module $M$ fait correspondre le $R-$module $M$ dont l'action est donné par $r\cdot m=f(r)\cdot m.$ Par unicité de l'adjonction, le foncteur d'extension est dans ce cas l'application $M \longmapsto S \otimes_R M$ de $R-\mathfrak{Mod}$ vers $S-\mathfrak{Mod}.$     
\end{remarque}

\smallskip

En vue de la proposition \ref{prop2}, on définit l'ensemble sous-jacent à une monade $\Sigma$ dans la catégorie des ensembles comme $\abs{\Sigma}:=\Sigma(\textbf{1}).$ L'isomorphisme canonique 
\begin{equation*}
\varphi: \abs{\Sigma}=\Hom_{\mathfrak{Ens}}(\textbf{1},\Sigma(\textbf{1})) \cong \End_{\Sigma-\mathfrak{Mod}}(\underline{L}_R(\textbf{1}))
\end{equation*} munit $\abs{\Sigma}$ d'une structure de monoïde. En effet, étant donnés deux éléments $x$ et $y$ de $\abs{\Sigma},$ en posant $x\cdot y:=\varphi^{-1}(\varphi(x)\circ\varphi(y))$ on obtient une loi associative dont l'unité est $\epsilon_\textbf{1}(1).$ Il n'y a aucune raison pour attendre que cela soit un monoïde commutatif, car la composition d'endomorphismes ne l'est pas.    

\subsection{Les monades algébriques}

Pour définir une catégorie d'anneaux généralisés adéquate, il faudra imposer quelques restrictions aux monades. La première d'entre elles, l'algébricité, nous permettra d'obtenir une notion d'anneau généralisé non commutatif. 

\smallskip

\begin{definition}
Un endofoncteur $\Sigma: \mathfrak{Ens} \longrightarrow \mathfrak{Ens}$ est dit \textit{algébrique} s'il commute avec des limites inductives filtrées, c'est-à-dire, si $\Sigma(\varinjlim_I X_i)\cong \varinjlim_I \Sigma(X_i)$ pour chaque famille d'ensembles indexée par un ensemble partiellement ordonné filtré. Une monade est algébrique si son foncteur sous-jacent l'est. 
\end{definition}

\smallskip

\begin{exemple}[Une monade non algébrique] Comme on a montré dans la proposition \ref{prop2}, la monade associée à un anneau est algébrique. Si l'on pense à $\Sigma(X)$ comme l'ensemble des $\Sigma-$combinaisons linéaires formelles d'éléments de $X,$ il faut trouver des combinaisons à support infini pour avoir un exemple de monade non algébrique. Soit $\hat{\Z}_\infty$ (voir la section \ref{zinfty} pour le choix de cette notation) la monade définie par 
\begin{equation*}
\hat{\Z}_\infty(X):=\{\sum_{x\in X} \lambda_x: \ \lambda_x\in\R, \  \sum_{x\in X}\abs{\lambda_x}\leq 1\}.
\end{equation*} On montre que ce n'est pas une monade algébrique en la testant sur le système $X_\bullet=((X_n)_{n\in \N}, (i_{n,m})_{n\leq m})$ formé des ensembles $X_n=\{0,\ldots,n\}$ et des inclusions de $X_n$ dans $X_m$, la limite inductive étant $\N.$ D'un côté, $\hat{\Z}(\N)$ est l'ensemble des séries convergentes dont la somme des modules est plus petite ou égale à 1. De l'autre côté, $\hat{\Z}(X_n)$ est formé des $(n+1)$-uplets $(\lambda_0,\ldots,\lambda_n)$ telles que $\abs{\lambda_0}+\ldots+\abs{\lambda_n}\leq 1$ et la limite inductive est la réunion de tous ces ensembles. C'est clair que dans ce cas on ne peut pas échanger la limite car $\hat{\Z}(\N)-\varinjlim\hat{\Z}(X_n)$ contient toutes les séries convergentes avec un nombre infini de termes non nuls dont la somme des modules est plus petite ou égale que 1, par exemple $\sum_{n=0}^\infty \frac{1}{(n+1)^2\pi^2}$.      
\end{exemple}

\smallskip

Puisque la composition de deux foncteurs algébriques est encore algébrique, ils forment une sous-catégorie monoïdale de $\text{End}(\mathfrak{Ens}),$ qui est en plus pleine. On remarque aussi qu'un foncteur algébrique est complètement déterminé par sa valeur dans les ensembles finis, car tout ensemble $X$ est la limite inductive filtrée de ses sous-ensembles finis. Si $\Sigma$ est un endofoncteur algébrique et l'on note par $\underline{\mathbb{N}}$ la catégorie des ensembles finis standard $\textbf{n}=\{1,\ldots,n\}$ dont les morphismes sont les applications entre ensembles finis, la restriction $\Sigma \longmapsto \Sigma_{|\underline{\mathbb{N}}}$ induit une équivalence catégorique entre  $\mathfrak{Ens}^{\underline{\mathbb{N}}}:=
\text{Hom}_\mathfrak{Cat}(\underline{\mathbb{N}},\mathfrak{Ens})$ et les endofoncteurs algébriques. On peut les imaginer donc comme une collection d'ensembles $\{\Sigma(\textbf{n})\}_{n\geq 0}$ munie des applications fonctorielles $\Sigma(\varphi):\Sigma(\textbf{m})\longrightarrow \Sigma(\textbf{n})$ qui sont définies pour chaque $\varphi: \textbf{m} \longrightarrow \textbf{n}.$ 

\smallskip
Étant donné un foncteur $G: \underline{\mathbb{N}} \longrightarrow \mathfrak{Ens}$ et un ensemble quelconque $X,$ on pose:
\begin{equation*}
H_0(X):=\bigsqcup_{n\geq 0} G(\textbf{n})\times X^n, \quad H_1(X):=\bigsqcup_{\varphi:\textbf{m}\longrightarrow\textbf{n}} G(\textbf{m})\times X^n.
\end{equation*} On a alors des applications $p,q: H_1(X) \longrightarrow H_0(X)$ définies de la façon suivante: si $X^\varphi$ représente l'application $(x_1,\ldots,x_n) \mapsto (x_{\varphi(1)},\ldots,x_{\varphi(n)}),$ alors les restrictions de $p$ et $q$ à la composante indexée par $\varphi: \textbf{m} \longrightarrow \textbf{n}$ sont donnés par 
\begin{align*}
id_{G(\textbf{m})}\times X^\varphi&: G(\textbf{m})\times X^n \longrightarrow G(\textbf{m})\times X^m, \\
G(\varphi)\times \id_{X^n}&: G(\textbf{m})\times X^n \longrightarrow G(\textbf{m})\times X^n.  
\end{align*} Le lemme suivant, dont la démonstration se trouve dans \cite[4.1.4]{Du}, permet de calculer $\Sigma(X)$ comme le coégaliseur$^\star$ du système ci-dessus:   

\begin{lemme} Soit $\Sigma$ une monade algébrique et soit $G=\Sigma|_{\underline{\mathbb{N}}}\in \mathfrak{Ens}^{\underline{\N}}$ le foncteur image de $\Sigma$ par l'équivalence de catégories. Alors
\begin{equation*}
\Sigma(X)=\mathrm{Coeg}(p,q: H_1(X) \rightrightarrows H_0(X)) 
\end{equation*}
\end{lemme} 

\smallskip

Par conséquent, si $X$ et $Y$ sont des ensembles, une application $\alpha: \Sigma(X) \longrightarrow Y$ est la donnée d'une famille $\{\alpha^{(n)}: \Sigma(\textbf{n})\times X^n \longrightarrow Y \}_{n\geq 0}$ qui fait commuter le diagramme 
\begin{equation}\label{don}
\xymatrix{
& \Sigma(\textbf{m})\times X^n \ar[d]_{\Sigma(\varphi)\times id_{X^n}} \ar[rr]^{id_{\Sigma(\textbf{m})}\times X^{\varphi}} 
& & \Sigma(\textbf{m})\times X^m \ar[d]^{\alpha^{(m)}} \\
& \Sigma(\textbf{n})\times X^n \ar[rr]^{\alpha^{(n)}} & & Y}
\end{equation} pour tout $\varphi: \textbf{m} \longrightarrow \textbf{n}.$ Cela montre que $\Sigma(\textbf{n})$ paramètre en quelque mode les opérations n-aires de $X$ vers $Y$ et on peut écrire $[t]_\alpha(x_1,\ldots,x_n)$ au lieu de $\alpha(t,x_1,\ldots,x_n).$ On dit que $t \in \Sigma(\textbf{n})$ est une \textit{opération n-aire formelle} $[t]_\alpha: X^n \longrightarrow Y.$ Dans le nouveau symbolisme, la commutativité du diagramme ci-dessus se traduit en
\begin{equation*}
[\Sigma(\varphi)(t)]_\alpha(x_1,\ldots,x_n)=[t]_\alpha(x_{\varphi(1)},\ldots,x_{\varphi(m)})
\end{equation*} pour toute opération formelle $t \in \Sigma(\textbf{m})$ et pour toute application $\varphi: \textbf{m}\longrightarrow \textbf{n}.$

\medskip

D'après le lemme, il existe une surjection $H_0(X) \twoheadrightarrow \Sigma(X)$ dont la restriction à $\Sigma(\textbf{n})\times X^n$ applique $(t,x_1,\ldots,x_n)$ dans $\Sigma(x)(t)\in \Sigma(X),$ $x: \textbf{n} \longrightarrow X$ étant l'application $k \mapsto x_k.$ Notons $t(\{x_1\},\ldots,\{x_n\})$ cette image. Alors pour tout $s$ dans $\Sigma(X)$ il existe $n\leq 0, t\in\Sigma(\textbf{n})$ et $x_1,\ldots,x_n\in X$ tels que $s=t(\{x_1\},\ldots,\{x_n\}).$ En fait, $\Sigma(X)$ est l'ensemble de toutes ces expressions modulo les relations 
\begin{equation*}
(\Sigma(\varphi))(t)(\{x_1\},\ldots,\{x_n\})\sim t(\{x_{\varphi(1)}\},\ldots,\{x_{\varphi(n)}\}). 
\end{equation*}   

\smallskip

Ce point de vue nous permet de déterminer univoquement une monade algébrique à partir des données suivantes:

\smallskip

\begin{enumerate}
\item Une collection d'ensembles $\{\Sigma(\textbf{n})\}_{n\geq 0}$ et d'applications $\Sigma(\varphi): \Sigma(\textbf{m}) \longrightarrow \Sigma(\textbf{n})$ définies pour chaque $\varphi: \textbf{m} \longrightarrow \textbf{n}$ et telles que 
\begin{equation*}
\Sigma(id_{\textbf{n}})=id_{\Sigma(\textbf{n})}, \quad \Sigma(\psi\circ\varphi)=\Sigma(\psi)\circ\Sigma(\varphi).
\end{equation*} Puisque $\Sigma(X)=\varinjlim \Sigma(\textbf{n}),$ après avoir choisi d'identifications convenables entre les sous-ensembles finis de $X$ et les ensembles finis standard $\textbf{n}$, on peut reconstruire à partir de cette famille la valeur de la monade algébrique dans tous les ensembles. 

\smallskip

\item Une famille de morphismes de multiplication $\mu_n^{(k)}:\Sigma(\textbf{k}) \times \Sigma(\textbf{n})^k \longrightarrow \Sigma(\textbf{n})$ qui vérifient les relations
\begin{align*}
\mu_n^{(k)}\circ (id&_{\Sigma(\textbf{k})}\times \Sigma(\textbf{n})^\varphi )=\mu_n^{(k')}\circ (\Sigma(\varphi)\times id_{\Sigma(\textbf{n})^{k'}}) \\
&\mu_n^{(k)}\circ(id_{\Sigma(\textbf{k})}\times \Sigma(\psi)^\varphi)=\mu_m^{(k)}
\end{align*}
pour chaque $\varphi: \textbf{k} \longrightarrow \textbf{k}'$ et $\psi: \textbf{m} \longrightarrow \textbf{n}.$ En effet, d'après ce que l'on a vu précédemment, se donner le morphisme $\mu_X: \Sigma^2(X)\longrightarrow \Sigma(X),$ c'est équivalent lorsque l'on remplace $X$ et $Y$ par $\Sigma(X)$ dans (\ref{don}) à se donner une famille $\mu^{(k)}: \Sigma(\textbf{k})\times \Sigma(X)^k \longrightarrow \Sigma(X).$ Dans une seconde étape, on considère les restrictions à chaque $\Sigma(\textbf{n}),$ ayant ainsi   $\mu_n^{(k)}:\Sigma(\textbf{k}) \times \Sigma(\textbf{n})^k \longrightarrow \Sigma(\textbf{n}).$

\smallskip

\item Un élément distingué $\textbf{e}\in \Sigma(1).$ Comme $\id_{\mathfrak{Ens}}=\Hom_{\mathfrak{Ens}}(\textbf{1},\cdot),$ par le lemme de Yoneda$^\star$ le morphisme d'unité $\epsilon: \id_{\Ens}\longrightarrow \Sigma$ est déterminé univoquement par un élément $\textbf{e}:=\epsilon_{\textbf{1}}(\textbf{1})\in\Sigma(\textbf{1}).$ Si $\tilde{x}:\textbf{1}\longrightarrow X$ est la seule application ayant image $x,$ alors $\epsilon_X(x)=(\Sigma(\tilde{x}))(\textbf{e}).$  
\end{enumerate}

\smallskip
Ces données doivent vérifier de plus les axiomes de monade. Ce n'est pas difficile à voir que les conditions de la définition 1 se traduisent en:
\begin{enumerate}
\item $\mu_n^{(1)}(\textbf{e},t)=t$ pour tout $t \in \Sigma(\textbf{n})$ et pour tout $n\geq 0.$
\item $\mu_n^{(n)}(t,\{1\}_\textbf{n},\ldots,\{n\}_\textbf{n})=t$ pour tout $t\in \Sigma(\textbf{n}).$  
\item Pour tout $n,k,m\geq 0,$ le diagramme suivant commute:
\begin{equation}\label{monadalg}
\xymatrix{\Sigma(\textbf{k})\times \Sigma(\textbf{n})^k \times \Sigma(\textbf{m})^n \ar[rrr]^{\id_{\Sigma(\textbf{k})}\times(\mu_m^{(n)})^{(k)}} \ar[d]^{\mu_n^{(k)}\times\id_{\Sigma(\textbf{m})^n}}& & & \Sigma(\textbf{k})\times \Sigma(\textbf{m})^k \ar[d]^{\mu_m^{(k)}}\\
\Sigma(\textbf{n})\times \Sigma(\textbf{m})^n \ar[rrr]^{\mu_m^{(n)}}& & & \Sigma(\textbf{m})}
\end{equation}
\end{enumerate}

\smallskip

\begin{notation} Soit $\Sigma$ une monade algébrique définie par les données que l'on vient de décrire. Posons $||\Sigma||:=\bigsqcup_{n\geq 0} \Sigma(\textbf{n}).$ On appelle \textit{constantes} de $\Sigma$ les éléments de $\Sigma(\textbf{0}),$ \textit{opérations unaires} les éléments de $\Sigma(\textbf{1})$ et en général \textit{opérations n-aires} les éléments de $\Sigma(\textbf{n})$ pour $n \geq 2.$ Lorsque l'on veut souligner que $u$ est une opération $n-$aire, on écrit $u^{[n]}.$ Si $\Sigma(\textbf{0})$ n'a qu'un élément, on dit que $\Sigma$ est une monade avec zéro. Si $\lambda$ est une opération unaire, on écrit normalement $\lambda x$ au lieu de $\lambda_X(x);$ par exemple, $-x$ au lieu de $[-]_X(x).$ De même, si $+$ est une opération binaire, on n'écrit pas $[+]_X(x_1,x_2)$ mais $x_1+x_2.$    
\end{notation}

\subsection{Présentation d'une monade algébrique} 

Étant donnés une monade algébrique $\Sigma$ et une partie $U\subset ||\Sigma||,$ on considère la plus petite sous-monade contenant $U,$ ce qui revient à dire l'intersection de toutes les sous-monades $\Sigma_\alpha$ de $\Sigma$ dont leurs ensembles d'opérations contiennent $U.$ Notons-la
\begin{equation*}
\F_\emptyset \langle U \rangle:=\bigcap_{U\subset ||\Sigma_\alpha||}\Sigma_\alpha
\end{equation*} 

\begin{proposition} Soit $U$ une partie d'une monade algébrique $\Sigma.$ Alors $\Sigma':=\F_\emptyset\langle U \rangle$ est la sous-monade de $\Sigma$ dont l'ensemble $\Sigma'(\textbf{n})$ est formé des élément obtenus en appliquant un nombre fini de fois les règles suivantes:
\begin{enumerate}
\item $\{k\}_\textbf{n} \in \Sigma'(\textbf{n})$ pour tout $1\leq k \leq n.$
\item (Propriété de remplacement) Soit $u\in U$ une opération k-aire. Alors, pour tout $t_1,\ldots,t_k \in \Sigma'(\textbf{n}),$ on a   $[u]_{\Sigma(\textbf{n})}(t_1,\ldots,t_k) \in \Sigma'(\textbf{n}).$ 
\end{enumerate} 
\end{proposition}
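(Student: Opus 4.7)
The plan is to prove the two inclusions of sub-endofunctors of $\Sigma$. Let $\Sigma''$ denote the collection of elements generated inductively by rules (1) and (2); we must show $\Sigma' = \Sigma''$.

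The easy direction is $\Sigma'' \subseteq \Sigma'$: any sub-monad $\Sigma_\alpha \subset \Sigma$ whose operations contain $U$ must be stable under every $\Sigma(\varphi)$ and under $\mu$, and must contain $\textbf{e}$. In particular, $\{k\}_{\textbf{n}} = \Sigma(\tilde k)(\textbf{e}) \in \Sigma_\alpha(\textbf{n})$, so rule (1) is forced; and for $u \in U \cap \Sigma(\textbf{k})$ and $t_1,\ldots,t_k \in \Sigma_\alpha(\textbf{n})$, one has $[u](t_1,\ldots,t_k) = \mu_n^{(k)}(u,t_1,\ldots,t_k) \in \Sigma_\alpha(\textbf{n})$, so rule (2) is forced. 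Intersecting over all such $\Sigma_\alpha$ gives $\Sigma'' \subseteq \Sigma'$.

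For the reverse inclusion $\Sigma' \subseteq \Sigma''$, the plan is to check that $\Sigma''$ is itself a sub-monad of $\Sigma$ containing $U$. Containment of $U$ is immediate: any $u\in U\cap\Sigma(\textbf{k})$ equals $[u](\{1\}_{\textbf{k}},\ldots,\{k\}_{\textbf{k}})$ by the axiom $\mu_k^{(k)}(t,\{1\}_{\textbf{k}},\ldots,\{k\}_{\textbf{k}}) = t$, and this belongs to $\Sigma''(\textbf{k})$ by one application of (2) to elements from (1). The unit $\textbf{e} = \{1\}_{\textbf{1}} \in \Sigma''(\textbf{1})$ comes from rule (1) with $n=k=1$. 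It then remains to establish (a) functoriality: $\Sigma(\varphi)(\Sigma''(\textbf{m})) \subseteq \Sigma''(\textbf{n})$ for every $\varphi:\textbf{m}\to\textbf{n}$, and (b) stability under multiplication: if $t\in\Sigma''(\textbf{k})$ and $s_1,\ldots,s_k\in\Sigma''(\textbf{n})$, then $[t](s_1,\ldots,s_k)\in\Sigma''(\textbf{n})$.

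Both (a) and (b) I would prove by induction on the number of applications of rules (1)–(2) used to generate the element under consideration. For (a), the base case $t=\{j\}_\textbf{m}$ gives $\Sigma(\varphi)(\{j\}_\textbf{m}) = \{\varphi(j)\}_\textbf{n}$, and the inductive case $t=[u](t_1,\ldots,t_k)$ uses naturality of $\mu$ (first relation in the definition of $\mu_n^{(k)}$) to push $\Sigma(\varphi)$ inside, together with the inductive hypothesis on each $t_i$. The genuinely nontrivial step is (b): one must extend the closure under multiplication from $u\in U$ (rule (2)) to arbitrary $t\in\Sigma''(\textbf{k})$. The base case $t=\{j\}_\textbf{k}=\Sigma(\tilde j)(\textbf{e})$ reduces, via the naturality relation of $\mu$ applied to $\tilde j:\textbf{1}\to\textbf{k}$ followed by the axiom $\mu_n^{(1)}(\textbf{e},s_j)=s_j$, to $[\{j\}_\textbf{k}](s_1,\ldots,s_k)=s_j\in\Sigma''(\textbf{n})$. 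The inductive step $t=[u](t_1,\ldots,t_m)$ with $u\in U$ uses associativity diagram (\ref{monadalg}) to rewrite
\begin{equation*}
\bigl[[u](t_1,\ldots,t_m)\bigr](s_1,\ldots,s_k) = [u]\bigl([t_1](s_1,\ldots,s_k),\ldots,[t_m](s_1,\ldots,s_k)\bigr);
\end{equation*}
by induction each inner term $[t_i](s_1,\ldots,s_k)$ lies in $\Sigma''(\textbf{n})$, and then rule (2) applied to $u\in U$ produces the outer expression in $\Sigma''(\textbf{n})$.

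The main obstacle is precisely step (b): one must see that the closure under multiplication by elements of $U$ automatically implies closure under multiplication by \emph{all} elements of $\Sigma''$. This is a purely formal consequence of associativity, but it is the only step where the compatibility axioms of an algebraic monad (the commutativity of (\ref{monadalg})) are genuinely used.
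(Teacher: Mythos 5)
Your proof is correct and takes essentially the same route as the paper's: the paper merely asserts that the collection $\Sigma''$ generated by the two rules is a sub-monad of $\Sigma$ containing $U$ and concludes $\Sigma''=\Sigma'$ by minimality, whereas you supply the verification of that assertion (containment of $U$ via the unit axiom, functoriality and stability under $\mu$ by induction on the generation of elements, using naturality and the associativity diagram). Nothing in your argument departs from the paper's intended proof; you have simply filled in the details it leaves to the reader.
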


\begin{proof} La preuve se réduit à la remarque que la collection de sous-ensembles $\Sigma''(\textbf{n})\subset \Sigma'(\textbf{n}) \subset \Sigma(\textbf{n})$ obtenus en appliquant ces deux règles un nombre fini de fois définit une sous-monade de $\Sigma$ contenant $U.$ Par conséquent $\Sigma''=\Sigma'$.  
\end{proof}

\smallskip

De plus, Durov montre que l'on peut toujours réaliser $\F_\emptyset\langle U \rangle$ comme une certaine sous-monade de la monade $M_U$ des mots à constants dans $U$ \cite[4.5.2]{Du}. On peut de même généraliser la construction ci-dessus de la façon suivante: soit $\Sigma_0$ une autre monade algébrique et soit $\rho: \Sigma_0 \longrightarrow \Sigma$ un morphisme de monades. Si $U$ est une partie de $||\Sigma||,$ on appelle sous-monade engendrée par $U$ sur $\Sigma_0$ 
\begin{equation*}
\Sigma_0\langle U \rangle=\F_{\emptyset}\langle U \cup ||\rho(\Sigma_0)||\rangle.
\end{equation*}

\smallskip

\begin{definition}
Soit $\Sigma$ une monade algébrique. On dit que $\Sigma$ est de \textit{type fini} sur $\Sigma_0$ s'il existe une partie finie $U$ de $||\Sigma||$ telle que $\Sigma=\Sigma_0\langle U \rangle.$ Une monade est dite absolument de type fini si elle de type fini sur $\F_\emptyset.$ 
\end{definition}

\smallskip

On peut construire de cette façon des monades libres, mais pour encoder l'information de la plupart des objets algébriques, on aura besoin d'un certain type de conditions de torsion, comme celles qui imposent, par exemple, les axiomes de la définition d'anneau. On le fait en définissant des relations d'équivalence algébrique compatibles avec la structure de monade:

\smallskip

\begin{definition}
Une relation $R\subset \Sigma \times \Sigma$ compatible avec la structure de monade de $\Sigma$ est la donnée d'un système de relations d'équivalence $\{R(\textbf{n})\subset \Sigma(\textbf{n})\times \Sigma(\textbf{n})\}_{n\geq 0}$ tel que si $t\equiv_{R(\textbf{k})} s, t_i \equiv_{R(\textbf{n})} s_i$ pour tout $1\leq i \leq k,$ alors: 
\begin{equation*}
[t]_{\Sigma(\textbf{n})}(t_1,\ldots,t_k)\equiv_{R(\textbf{n})}[s]_{\Sigma(\textbf{n})}(s_1,\ldots,s_k)
\end{equation*} En particulier, $R$ est compatible avec les applications induites $\Sigma(\varphi).$
\end{definition}

\smallskip

Soit $\Sigma$ une monade algébrique et $R$ une relation compatible avec $\Sigma.$ On définit alors le quotient $\Sigma'=\Sigma/R$ comme la monade dont les images des ensembles finis sont $\Sigma'(\textbf{n})=\Sigma(\textbf{n})/R(\textbf{n})$ et dont les morphismes de multiplication et d'unité sont les induits par passage au quotient. C'est la seule structure de monade dans $\Sigma'$ pour laquelle l'application évidente $\Sigma \longrightarrow \Sigma'$ est un morphisme de monades. Dans la suite, on appliquera cette construction en prenant comme relations les définies par un système d'équations, c'est-à-dire, par l'intersection $R=\cap_\alpha R_\alpha$ d'un certain nombre de relations algébriques compatibles. On note
\begin{equation*}
\Sigma_0\langle U | R\rangle:=\Sigma_0\langle U \rangle/R 
\end{equation*} la monade engendré sur $\Sigma_0$ par les opérations $U$ et par l'ensemble $R$ de relations.    

\smallskip

\begin{definition} Une monade algébrique $\Sigma$ admet une \textit{présentation finie} sur $\Sigma_0$ si elle de type fini et s'il existe un ensemble fini de relations $R$ tel que 
\begin{equation*}
\Sigma\cong \Sigma_0\langle U | R \rangle
\end{equation*}  
\end{definition}

\subsection{L'additivité}

Comme on a montré à la fin de l'épigraphe \ref{mona}, l'ensemble sous-jacent à une monade algébrique a toujours une structure de monoïde multiplicatif. On voudrait explorer maintenant la possibilité de définir une addition. Soit $\Sigma$ une monade algébrique ayant une constante $0\in\Sigma(\textbf{0}).$ On a des applications, dites de comparaison, $\pi_n: \Sigma(\textbf{n}) \longrightarrow \Sigma(\textbf{1})^n$ dont la composante $k-$ième applique
\begin{equation*}
t \mapsto t(0_{\Sigma(\textbf{1})},\ldots, \{1\}_\textbf{1} \ldots, 0_{\Sigma(\textbf{1})}).
\end{equation*} 

\smallskip

\begin{definition}
Soit $\Sigma$ une monade algébrique avec constante. Une opération binaire $[+]\in\Sigma(\textbf{2})$ est une \textit{pseudoaddition} si $\pi_2([+])=(\textbf{e},\textbf{e})\in\Sigma(1)^2.$ S'il s'agit de plus de la seule pseudoaddition que l'on peut définir sur $\Sigma,$ on dit que $[+]$ est une \textit{addition}. On appelle $\Sigma$ monade \textit{hypoadditive} (resp. hyperadditive, additive) si toutes les applications $\pi_n: \Sigma(\textbf{n}) \longrightarrow \Sigma(\textbf{1})^n$ sont injectives (resp. surjectives, bijectives). 
\end{definition}

\begin{exemple} Soit $\Sigma$ la monade définie par $\Sigma(\textbf{0})=\Z$ et par $\Sigma(\textbf{n})=\Z[T_1,\ldots,T_n]$ pour tout $n\geq 1$. Après avoir fixée $c=0$ comme constante, $\pi_2$ applique un polynôme $F(X,Y)$ dans $(F(T,0), F(0,T))\in \Sigma(\textbf{1})^2$. Cette application étant surjective, on conclut que $\Sigma$ est une monade hyperadditive. Elle n'est pas additive car il y a plusieurs façons de définir un polynôme à deux variables dont ses évaluations en $X=0$ et en $Y=0$ sont données. Par exemple, tous les polynômes de la forme $f=X.g+Y.h$, où $g(X,0)=h(0,Y)=1$, ont la même image par $\pi_2.$
\end{exemple}

\smallskip

La définition de pseudoaddition, qui peut sembler bizarre à première vue, ne dit que $x+0=x=0+x$ lorsque l'on la lit dans un $\Sigma-$module $X.$ Si $\Sigma$ est une monade additive, on obtient une addition en posant $[+]=\pi_2^{-1}(\textbf{e},\textbf{e}).$ Dans une monade hypoadditive, l'application $\pi_0: \Sigma(\textbf{0}) \longrightarrow \textbf{1}$ est injective, donc, il n'y a qu'un constante $0\in\Sigma(\textbf{0}).$ De plus, le fait que $\pi_2$ soit injective implique que s'il existe une pseudoaddition $[+],$ elle est nécessairement unique, c'est-à-dire, une addition. Alors, l'associativité et la commutativité de $[+]$ découlent automatiquement des identités pour $\pi_2(\{1\}+\{2\})$ et $\pi_3((\{1\}+\{2\})+\{3\})$.

\smallskip

\begin{proposition}
Les monades algébriques additives sont en correspondance bijective avec les semi-anneaux. De plus, une monade additive est la monade associée à un anneau s'il existe une symétrie $[-]$ (i.e. une opération unaire d'ordre deux). 
\end{proposition}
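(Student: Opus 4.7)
Le plan est de calquer la construction du paragraphe \ref{mona}, en rempla�ant les anneaux par les semi-anneaux: l'axiome manquant (l'existence d'un inverse additif) correspond pr�cis�ment � l'absence de sym�trie $[-]$.

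�tant donn�e une monade alg�brique additive $\Sigma$, je commencerais par poser $R := \abs{\Sigma} = \Sigma(\mathbf{1})$, muni de la structure de mono�de multiplicatif rappel�e � la fin de \ref{mona}. L'additivit� permet alors de d�finir l'addition par $[+] := \pi_2^{-1}(\mathbf{e}, \mathbf{e}) \in \Sigma(\mathbf{2})$ et le z�ro comme l'unique �l�ment de $\Sigma(\mathbf{0})$ (unicit� garantie par la bijectivit� de $\pi_0 : \Sigma(\mathbf{0}) \to \mathbf{1}$). Le principe pour v�rifier les axiomes de semi-anneau est syst�matiquement le m�me: pour d�montrer une identit� entre deux op�rations $n$-aires, il suffit de v�rifier que leurs images par $\pi_n$ dans $R^n$ co�ncident, puis d'invoquer l'injectivit�. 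Ainsi, l'associativit� de l'addition s'obtient en remarquant que $\pi_3((\{1\}+\{2\})+\{3\}) = (\mathbf{e},\mathbf{e},\mathbf{e}) = \pi_3(\{1\}+(\{2\}+\{3\}))$; la commutativit�, l'identit� $x + 0 = x$ et la plupart des autres axiomes s'obtiennent par la m�me recette.

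R�ciproquement, � partir d'un semi-anneau $R$, je construirais la monade $\Sigma_R$ en reproduisant \emph{verbatim} la d�finition de la monade associ�e � un anneau: $\Sigma_R(X)$ est l'ensemble des combinaisons $R$-lin�aires formelles � support fini, avec les morphismes d'unit� et de multiplication comme dans \ref{mona}. Le calcul direct montre que $\pi_n : \Sigma_R(\mathbf{n}) \to R^n$ est l'identit�, donc $\Sigma_R$ est effectivement additive, et les deux constructions sont inverses l'une de l'autre.

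La principale difficult� que j'anticipe est la v�rification de la distributivit� dans la premi�re direction. La distributivit� � gauche $a(x+y) = ax + ay$ est facile: elle exprime que la multiplication par $a$ est un endomorphisme de $\Sigma$-module de $\underline{L}(\mathbf{1})$, et tout morphisme de $\Sigma$-modules commute par d�finition avec l'op�ration $[+]$. La distributivit� � droite $(a+b)x = ax + bx$ et l'absorption $0 \cdot x = 0$, en revanche, requi�rent de d�plier soigneusement le diagramme (\ref{monadalg}) appliqu� aux op�rations $[+]$ et $0$, puis de conclure par l'injectivit� de $\pi_2$. Pour la seconde assertion enfin, la pr�sence d'une sym�trie involutive $[-]$ v�rifiant $x + [-]x = 0$ force $(R,+)$ � �tre un groupe ab�lien, donc $R$ � �tre un anneau; r�ciproquement, l'op�ration $x \mapsto -x$ dans un anneau classique fournit la sym�trie cherch�e.
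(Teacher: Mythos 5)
Votre d�monstration est correcte et suit essentiellement la m�me strat�gie que celle du texte: dans un sens, on reprend mot pour mot la construction de $\Sigma_R$ par combinaisons lin�aires formelles et on constate que $\pi_n$ est bijective; dans l'autre, on munit $\abs{\Sigma}$ de la multiplication du mono�de sous-jacent et de l'addition $[+]=\pi_2^{-1}(\mathbf{e},\mathbf{e})$, puis on v�rifie les axiomes de semi-anneau gr�ce � l'injectivit� des $\pi_n$. La seule diff�rence est cosm�tique: le texte condense cette v�rification en une pr�sentation de $\Sigma$ par g�n�rateurs et relations, tandis que vous explicitez davantage la distributivit� via le diagramme (\ref{monadalg}), ce qui revient au m�me.
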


\begin{proof}
On rappel d'abord qu'un semi-anneau est un ensemble $R$ muni de deux opérations binaires $+$ et $\cdot$ telles que $(R,+)$ est un monoïde commutatif ayant élément neutre $0$ et que $(R,\cdot)$ est un monoïde avec unité $1.$ On demande de plus que la multiplication soit distributive par rapport à l'addition et que $0\cdot r=0=r\cdot 0$ pour tout $r\in R.$ Cette liste d'axiomes nous permet d'écrire toute opération n-aire $t\in\Sigma(\textbf{n})$ d'une façon unique sous la forme $\lambda_1\{1\}+\ldots+\lambda_n\{n\},$ ce qui revient à dire que l'application $\pi_n$ est bijective. Par conséquent, un semi-anneau définit une monade algébrique additive. 

Réciproquement, une monade algébrique additive est engendrée par la seule constante $0 \in \Sigma(\textbf{0})$, l'ensemble des opérations unaires $\abs{\Sigma}=\Sigma(\textbf{1})$ et l'addition $[+]\in \Sigma(\textbf{2})$. En utilisant la structure de monoïde multiplicatif de $\Sigma(\textbf{1})$, l'injectivité des applications $\pi_2$ et $\pi_3,$ ainsi que le fait que la monade n'admet qu'une seule constante, on obtient la présentation suivante:
\begin{align*}
\Sigma=\F_\emptyset\langle &0^{[0]}, \abs{\Sigma}, [+]^{[2]} | x+y=y+x, \ (x+y)+z=x+(y+z), \ x+0=x, \\
& \lambda(x+y)=\lambda x +\lambda y, \ (x+y)\lambda=x\lambda+y\lambda, \ \forall \lambda \in \abs{\Sigma} \rangle 
\end{align*} Alors, si l'on muni $\abs{\Sigma}$ du produit provenant de la structure de monoïde multiplicatif et de l'addition $+: \abs{\Sigma}^2 \longrightarrow \abs{\Sigma}$, les relations ci-dessus impliquent que $(\abs{\Sigma},+,\cdot)$ est un semi-anneau. C'est clair que les inverses pour l'addition sont la seule donnée supplémentaire dont on a besoin pour avoir en fait un anneau.   
\end{proof}

\begin{remarque}
Le nouveau cadre des monades algébriques permet de considérer des géométries relatives à des semi-anneaux tels que $\N$ ou $\R_{\geq 0}$ qui ne puissent pas être traitées avec les techniques usuelles de la géométrie algébrique. Cela a un spécial intérêt étant donnée que la géométrie tropicale, très en vogue depuis la fin des années 90, repose sur le semi-anneau $\mathbb{T}=(\R, \oplus, \otimes)$, où $x\oplus y:=\mathrm{min\ } \{x,y\}$ et $x\otimes y:=x+y$ (voir \cite{RST} pour une introduction). Puisque on n'as pas encore trouvé une définition fonctorielle des variétés tropicales, il serait intéressant d'étudier les propriétés de la géométrie relative à $\mathbb{T}$ au sens de Durov et voir si cela correspond en quelque sens à ce que l'on attend.      
\end{remarque}

\subsection{Les monades commutatives}

Malgré le succès du langage des monades algébriques pour encoder la géométrie sur de nouvelles structures, ce cadre est encore trop vaste pour avoir des équivalents des théorèmes classiques. De la même manière que l'on a introduit les monades algébriques en généralisant une propriété agréable des monades $\Sigma_R$ associées à un anneau, on va s'inspirer d'un autre trait des opérations définissant $\Sigma_R$ pour un anneau $R$ commutatif afin de trouver une sous-catégorie pleine des monades algébriques sur laquelle on travaillera dans la suite. En effet, dans un anneau classique $R=(R,+,\times)$, on a toujours le groupe abélien $(R,+)$ et on peut voir la distributivité comme une sorte de relation de commutation entre la somme et le produit. De plus, lorsque $R$ est commutatif, le monoïde sous-jacent $(R,\times)$ est aussi commutatif. Alors, dans un certain sens, un anneau usuel est commutatif si toutes les opérations commutent entre elles. Cela justifie la définition suivante:    

\smallskip

\begin{definition} Soit $\Sigma$ une monade algébrique. On dit que les opérations $t \in \Sigma(\textbf{n})$ et $s \in \Sigma(\textbf{m})$ \textit{commutent} si pour tout $\Sigma-$module $X$ et pour toute famille d'éléments $\{x_{ij}\}$ indexés par $1\leq i \leq n$ et par $1\leq j \leq m$ on a
\begin{equation*}
t(s(x_{11},\ldots,x_{1m}),\ldots,s(x_{n1},\ldots,x_{nm}))=s(t(x_{11},\ldots,x_{1n}),\ldots,t(x_{m1},\ldots,x_{mn}))
\end{equation*} Une monade algébrique est commutative si tout couple d'opérations dans $||\Sigma||$ commute.
\end{definition}

\smallskip

Cette définition, tout à fait effective, a un goût clairement matriciel. En effet, si l'on note par $M$ la matrice $n\times m$ formée des éléments $\{x_{ij}\}$, on peut faire agir les opérations $t$ et $s$ sur $M$ de deux  façons: premièrement, on évalue $s$ dans chaque ligne et on obtient $n$ valeurs auxquels on applique $t$; deuxièmement, on évalue $t$ dans chaque colonne et puis on applique $s$ aux $m$ éléments obtenus. Alors $t$ et $s$ sont commutatives si les deux procédés donnent le même résultat. Dans les exemples suivants on montre ce que cette égalité entraîne pour les opérations d'arité basse.   

\smallskip

\begin{exemple}[Commutativité des constantes]\label{const} Soient $c, d \in \Sigma(\textbf{0})$ deux constantes dans $\Sigma.$ La condition ci-dessus se lit $c=d$, donc une monade commutative admet au plus une constante. Cela permet de se référer sans ambiguïté à la constante $0$ d'une monade commutative. Soit maintenant $t$ une opération n-aire. Si $t$ et $c$ commutent, alors $t(c,\ldots,c)=c.$ Par conséquent, dans une monade algébrique $0$ reste invariant par toutes les opérations.   
\end{exemple}

\begin{exemple}[Commutativité des opérations unaires]\label{un} Soient $u, v \in \abs{\Sigma}$ deux opérations unaires. Si elles commutent, on a $uv=vu.$ On conclut que le monoïde sous-jacent à une monade commutative est aussi commutatif. Soit maintenant $t$ une opération n-aire. La commutativité de $u$ et $t$ se traduit en $t(ux_1,\ldots,ux_n)=ut(x_1,\ldots,x_n)$, qui devient la distributivité de la somme lorsque $t=[+]$.  
\end{exemple}

\begin{exemple}[Commutativité des opérations binaires]\label{bi} Soient $t, s \in \Sigma(\textbf{2})$ deux opérations binaires. On dit que $t$ et $s$ commutent si pour tout $x,y,z,t$ d'un $\Sigma-$module $X$ quelconque on a $t(s(x,y),s(z,t))=s(t(x,z),t(y,t))$. Lorsque $t=s=[+]$ cela signifie simplement que $(x+y)+(z+t)=(x+z)+(y+t)$. 
\end{exemple}

\smallskip

Il existe un rapport étroit entre l'additivité et la commutativité d'une monade algébrique. D'abord, c'est très facile à montrer qu'une monade commutative admet au plus une pseudoaddition (voir la fin de la paragraphe 4.6). Par conséquent, toute monade commutative hyperadditive est automatiquement additive. De plus, d'après la proposition 4, une monade additive est isomorphe à $\Sigma_{\abs{\Sigma}}$. On en déduit qu'une monade algébrique additive $\Sigma$ est commutative si et seulement si le semi-anneau $\abs{\Sigma}$ est commutatif. En vue de toutes ces considérations, les monades algébriques commutatives ressemblent suffisamment aux anneaux commutatifs pour introduire:  

\smallskip

\begin{definition} On appelle $\textit{anneau généralisé}$ une monade algébrique commutative sur les ensembles. On note par $\mathfrak{Gen}$ la sous-catégorie pleine des monades algébriques formé par les anneaux généralisés.  
\end{definition}

\smallskip

Une fois que l'on est arrivés à la définition d'anneau généralisé, on peut essayer de trouver la généralisation d'un corps. Évidemment, il ne suffit pas de demander que tous les éléments non nuls du monoïde commutatif sous-jacent soient inversibles. Entre les propriétés usuels des corps classiques, on choisit la non-existence d'idéaux propres comme définition. On dit qu'un anneau généralisé est sous-trivial s'il est isomorphe à une sous-monade de la monade finale $\textbf{1}$ décrite dans l'exemple 1. Ainsi,

\smallskip

\begin{definition}
Un anneau généralisé non sous-trivial $K$ est un \textit{corps généralisé} si tout quotient strict de $K$ différent de lui même est sous-trivial. 
\end{definition}   

\smallskip

Cette définition généralise la notion de corps au sens que tout corps classique est un corps généralisé. Comme on va montrer dans la section suivante, le corps à un élément et le corps résiduel de $\Z_\infty$ sont des corps généralisés. 

\section{Exemples}

Dans cette section, on montre à travers d'exemples d'anneaux généralisés la puissance du nouveau cadre algébrique, justifiant ainsi toute la machinerie qui a été développée afin de construire la catégorie des monades algébriques commutatives. On identifie toujours un anneau classique $R$ avec sa monade associée $\Sigma_R.$ Comme on avait promis dans la remarque 1, on commence par décrire la sous-monade de $\Z$ formée des combinaisons linéaires formelles à coefficients non négatifs. Ensuite, on construit l'anneau local à l'infini $\Z_\infty$ et on décrit les $\Z_\infty-$modules. Ces deux objets étant définis, on est prêts pour identifier le corps à un élément $\F_1$ avec leur intersection. On étudie de même les extensions cyclotomiques de $\F_1$ et l'on montre que $\Z$ admet une présentation finie sur $\F_1.$ Dans la partie finale, on définit, pour tout entier $N \geq 1,$ deux anneaux généralisés $A_N$ et $B_N$ qui sont à la base de la construction de la compactification de $\Spec \Z.$ 

\subsection{$\N$}\label{sousm}

Soit $\N$ l'endofoncteur des ensembles dont l'image de chaque $X$ est donnée par les combinaisons linéaires à support fini d'éléments de $X$ à coefficients non-négatifs: 
\begin{equation*}
\N(X)=\{\lambda_1\{x_1\}+\ldots+\lambda_n\{x_n\}: \ \lambda_i \in \mathbb{Z}, \lambda_i \geq 0, x_i \in X, n \geq 0 \}.
\end{equation*} Puisque $\epsilon(x)=\{x\}\in\N$ et le produit de deux entiers non-négatifs est encore non-négatif, $\N$ est compatible avec la multiplication et l'unité de $\Z,$ donc il s'agit d'une sous-monade dont l'ensemble sous-jacent est le monoïde des entiers non-négatifs.

\smallskip

Considérons un $\N-$module: c'est la donnée d'un ensemble $X$ et d'un morphisme $\alpha: \N(X)\longrightarrow X$ tel que $\alpha\circ\mu_X=\alpha\circ\N(\alpha)$ et que $\alpha\circ\epsilon_X=\id_X.$ Alors, l'opération $x\cdot y=\alpha(\{x\}+\{y\})$ munit $X$ d'une structure de monoïde commutatif et l'on montre comme dans (\ref{mona}) que la catégorie des $\N-$modules est en fait équivalente à celle des monoïdes commutatifs. La restriction de scalaires par rapport à l'inclusion $\N \longrightarrow \Z$ est donc le foncteur d'oubli des groupes abéliens vers les monoïdes commutatifs, qui admet comme adjoint le foncteur libre $D \longmapsto G[D]$.  

\subsection{$\mathbb{Z}_\infty$}\label{zinfty} Soit maintenant $\Sigma_{\infty}$ la sous-monade de $\R$ définie par les combinaisons octahedrales, c'est-à-dire, par les combinaisons linéaires formelles à support fini $\Sigma_i \lambda_i\{x_i\},$ où les $\lambda_i \in \R$ satisfont $\sum_i \abs{\lambda_i}\leq 1.$ En effet, c'est une sous-monade de $\R$ car $\{x\}\in \Sigma_\infty$ pour tout élément de la base et l'on a l'inégalité
\begin{equation*}
\sum_{i,j}\abs{\lambda_i \mu_{ij}}=\sum_i\abs{\lambda_i}\sum_j\abs{\mu_{ij}}\leq \sum_i \abs{\lambda_i}\leq 1, 
\end{equation*} ce qui revient à dire que les combinaisons formelles des éléments dans $\Z_\infty$ appartiennent de nouveau à $\Z_\infty.$ Le monoïde sous-jacent est $\abs{\Z_\infty}=[-1,1]$ avec la multiplication induite par celle de $\R.$ En remplacent les nombres réels par les complexes, on obtient la sous-monade $\overline{\Z}_\infty\subset \C.$  

\smallskip

Quels sont les $\Z_\infty-$modules? Par définition, un $Z_\infty-$module est un ensemble $X$ muni d'une application $\alpha: \Sigma_\infty(X) \longrightarrow X$ telle que $\alpha(\{x\})=x$ pour tout $x\in X$ et que
\begin{equation*}
\alpha(\sum_i \lambda_i \{\alpha(\sum_i \mu_{ij} \{x_j\})\})=\alpha(\sum_{i,j}\lambda_i\mu_{ij} \{x_j\})
\end{equation*} Remarquons que lorsque $X$ est un ensemble à $n$ points dans le plan affine, $\Sigma_\infty(X)$ s'identifie à l'enveloppe convexe des points. Cela nous amène à étudier les réseaux et les corps convexes dans un espace vectoriel. Afin de donner une description plus parlant de $\Z_\infty$-module, on construit d'abord la catégorie de $\Z_\infty-$réseaux, que l'on plonge ensuite dans la catégorie de $\Z_\infty$-modules sans torsion.  

\smallskip

Commençons par rappeler le cas $p$-adique. Fixons donc un nombre premier $p$ et soit $E$ un $\Q_p$-espace vectoriel de dimension finie. Alors, il existe une bijection entre l'ensemble de $\Z_p$-réseaux modulo l'action multiplicative de $\Q_p^\ast$ et les sous-monoïdes maximaux compactes (pour la topologie $p$-adique) de $\End(E)$, la correspondance étant donné par
\begin{equation*}
A \longmapsto M_A=\{g \in \End(E): g(A)\subset A\}
\end{equation*} Si $E$ est un espace vectoriel réel de dimension finie, on peut associer à toute forme quadratique définie positive $Q$ le sous-monoïde compact maximal
\begin{equation*}
M_Q=\{g\in \End(E): Q(g(x))\leq Q(x) \ \forall x \in E\}.
\end{equation*} Cependant, il existe des sous-monoïdes compacts maximaux n'ayant pas cette structure. Puisque $Q$ définit la norme $||x||=\sqrt{Q(x)}$, on peut remplacer les espaces vectoriels quadratiques par le cadre plus général des espaces vectoriels normés. En fait, on obtient ainsi une équivalence entre les normes sur $E$ et les corps compacts symétriques convexes inclus dans $E$ modulo l'action multiplicative de $R^\ast$. En effet, si l'on se donne une norme$ ||\cdot||$ sur $E$, l'ensemble $A_{||\cdot||}=\{x\in E: ||x||\leq 1\}$ est un corps compact symétrique convexe et, réciproquement, tout corps $A$ vérifiant ces propriétés définit la norme $||x||=\inf \{\lambda>0: \lambda^{-1}x \in A\}$. D'après \cite[2.3.3]{Du}, les corps compacts symétriques convexes sont en bijection avec le sous-monoïdes maximaux compacts (pour la topologie euclidienne usuelle) de $\End(E)$. Par analogie avec le cas $p$-adique, on dit que:

\begin{definition}
Un $\Z_\infty$-réseau est la donnée d'un couple $A=(A_{\Z_\infty}, A_\R)$ formé d'un espace vectoriel $A_\R$ de dimension finie et d'un corps compact symétrique convexe $A_{\Z_\infty}\subset A_\R$. Un morphisme de $\Z_\infty$-réseaux est un couple $f=(f_{\Z_\infty}, f_\R)$ formé d'une application linéaire $f_\R: A_\R \longrightarrow B_\R$ et d'une application $f_{\Z_\infty}: A_{\Z_\infty} \longrightarrow B_{\Z_\infty}$ telle que $f_{\Z_\infty}=f_{{\R|A}_{\Z_\infty}}$. 
\end{definition}   

Ensuite, on peut élargir la catégorie en oubliant le fait que l'espace vectoriel $A_\R$ soit de dimension finie et que le corps symétrique convexe soit compact, même fermé. On appelle $\Z_\infty$-modules plats (ou sans torsion) les nouveaux couples. Maintenant on utilise la construction généralisant la proposition 1 pour obtenir une notion de $\Z_\infty$-module. Puisque le foncteur d'oubli de la catégorie des $\Z_\infty$-modules sans torsion vers les ensembles 
\begin{equation*}
\Gamma_{\Z_\infty}: \Z_\infty-\mathfrak{Modpl} \longrightarrow \Ens, \quad A \longmapsto A_{\Z_\infty}=\Hom_{\Z_\infty}(\Z_\infty, A) 
\end{equation*} admet comme adjoint à gauche le foncteur libre 
\begin{equation*}
L_{\Z_\infty}: \Ens \longrightarrow \Z_\infty-\mathfrak{Modpl}, \quad X \longmapsto \Z_\infty(X),
\end{equation*} on obtient une monade $\Sigma_\infty$ sur les ensembles. À différence de ce qui se passait avec les monades associées à des anneaux, dans ce cas la catégorie $\Z_\infty-\mathfrak{Modpl}$ n'est pas équivalente à $\Ens^{\Sigma_\infty}$, mais une sous-catégorie pleine. On définit alors:

\begin{definition} Un $\Z_\infty$-module est un objet de la catégorie $\Ens^{\Sigma_\infty}$.
\end{definition} 

\smallskip

On rappel que dans la théorie standard des anneaux commutatifs, la localisation de $\Z$ dans le nombre premier $p,$ que l'on note $\Z_{(p)},$ est l'ensemble des fractions $\frac{m}{n}$ tels que $p$ ne divise pas $n.$ Cela correspond à considérer les éléments de $\Z_p$ qui appartient à $\Q,$ c'est-à-dire, l'intersection $\Z_{(p)}=\Z_p\cap \Q.$ Par analogie, on définit 
\begin{equation*}
\Z_{(\infty)}=\Z_\infty \cap \Q.
\end{equation*} Alors, $\Z_{(\infty)}(X)$ est l'ensemble des combinaisons linéaires octahedrales à coefficients rationnels. Dans la construction de $\widehat{\Spec\Z},$ on aura besoin d'exiger que les dénominateurs des coefficients soient seulement des puissances d'un entier $N.$ Cela revient à considérer l'intersection $A_N:=\Z[\frac{1}{N}]\cap \Z_{(\infty)}.$  

\subsection{Anneaux de valuation}

L'exemple précédent suggère la nécessité d'étudier les anneaux de valuation dans toute leur généralité. Soit $K$ un corps et soit $\abs{\cdot}_v$ une valuation sur $K$ archimédienne ou non. On pose $N_v:=\{x\in K: \abs{x}_v\leq 1\}$ et on définit l'anneau de valuation de $\abs{\cdot}_v$ comme étant la plus grande sous-monade algébrique $\mathcal{O}_v$ de $K$ telle que $N_v$ soit un $\mathcal{O}_v-$module. Alors, $\mathcal{O}_v$ est un anneau classique si et seulement si $\abs{\cdot}_v$ est une valuation archimédienne. En effet, 
\begin{equation*}
\mathcal{O}_v(\textbf{n})=\{(\lambda_1,\ldots,\lambda_n)\in K^n: \ \abs{\sum_{i=1}^n \lambda_i x_i}_v \leq 1 \ \text{si} \ \abs{x_i}_v\leq 1 \ \forall i=1,\ldots,n\} 
\end{equation*} Lorsque $\abs{\cdot}_v$ est archimédienne, la condition ci-dessus équivaut à $\abs{\lambda_i}_v\leq 1$ pour tout $i=1,\ldots,n,$ de sorte que $\mathcal{O}_v(\textbf{n})=N_v^n$. Si par contre $\abs{\cdot}_v$ n'est pas archimédienne, $\mathcal{O}_v(\textbf{n})$ dévient l'ensemble des n-uplets telles que $\abs{\lambda_1}_v+\ldots+\abs{\lambda_n}_v \leq 1$. En prenant la valeur absolue archimédienne dans $\Q,\R$ et $\C$ on obtient $\Z_{(\infty)}, \Z_\infty$ et $\overline{\Z}_\infty$ respectivement comme anneaux de valuation. On montre comme dans l'exemple 4.7 que l'on récupère les anneaux initiaux en localisant les anneaux de valuations dans le système multiplicatif engendré par un élément $0<\abs{f}<1$ quelconque. 

\smallskip

Si $\abs{\cdot}_v$ est une valuation archimédienne, on sait que $\mathcal{O}_v$ est un anneau local d'idéal maximal $\mathfrak{m}_v=\{x\in K: \ \abs{x}_v <1\}$ et l'on appelle corps résiduel le quotient de $\mathcal{O}_v$ par $\mathfrak{m}_v$. Étudions le cas non-archimédien sur $\R$. Cela revient à quotienter $\Z_\infty$ par les points intérieurs de $[-1,1],$ qui sont ceux à valeur absolue strictement plus petit que $1.$ Il s'agit d'un idéal de l'anneau généralisé $\Z_\infty$ au sens que l'on va introduire dans la section suivante. Comme ensemble sous-jacent $\F_\infty:=\Z_\infty/\mathfrak{m}_\infty$ est réduit à $\{-1,0,1\}$, qui admet la structure de $\Z_\infty$-module suivante:
\begin{displaymath}
\alpha(\lambda_{-1}{-1}+\lambda_0\{0\}+\lambda_1\{1\})= \left\{ 
\begin{array}{lr}
1 & \lambda_1-\lambda_{-1}=1 \\
-1 & \lambda_1-\lambda_{-1}=-1 \\
0 & \abs{\lambda_1-\lambda_{-1}}<1 
\end{array} \right. 
\end{displaymath}

Soit maintenant $x=\lambda_1\{1\}+\ldots+\lambda_n\{n\}$ un élément de $\Z_\infty(\textbf{n})$. Si $\sum_{i=1}^n \abs{\lambda_i}<1$, alors $[x]=[0]$ dans $\F_\infty(\textbf{n})$. Considérons $x$ et $y$ tels que $\sum_{i=1}^n \abs{\lambda_i}=1$. Dans ce cas, $[x]=[y]$ si et seulement si les suites des signes des coefficients $\lambda_i$ coïncident. Cette monade n'est pas additive, car tous les éléments différents de $\pm \{i\}$, $i=1,\ldots,n$ sont dans le noyau de l'application de comparaison $\pi_n: \F_\infty(\textbf{n}) \longrightarrow \abs{\F_\infty}^n$. Notamment, $\F_\infty$ est engendré par une constante $0^{[0]}$, une opération unaire $[-1]^{[1]}$ et une opération binaire $\ast^{[2]}$ défini dans chaque $\F_\infty$-module par $[x]\ast [y]=[(1-\lambda)x+\lambda y]$, $0<\lambda<1$ étant un nombre réel quelconque. L'opération $\ast$ vérifie les propriétés suivantes:
\begin{align*}
&0\ast 0=0, \quad x\ast 0=0, \quad x\ast x=x, \quad x \ast (-x)=0, \\
&(-x)\ast(-y)=-(x\ast y), \quad (x\ast y)\ast z=x\ast(y\ast z)  
\end{align*} Montrons, par exemple, la commutativité:
\begin{equation*}
x\ast y -y\ast x=(1-\lambda)x+\lambda y -(1-\lambda)y-\lambda x=(1-2\lambda)x+(2\lambda-1)y.
\end{equation*} Puisque $\abs{1-2\lambda}+\abs{2\lambda-1}<1$, l'élément $x\ast y-y\ast x$ est nul dans le quotient. Cela nous permet d'écrire la présentation suivante, où l'on utilise une extension de $\F_1$ que l'on va définir dans le paragraphe suivant:      
\begin{equation}\label{res}
\F_\infty=\F_{1^2}[\ast^{[2]}| \textbf{e}\ast\textbf{e}=0, \  \textbf{e}\ast\textbf{e}=\textbf{e}, \ x\ast y=y \ast x, \ (x\ast y)\ast z=x\ast(y\ast z)]
\end{equation} Remarquons que les relations découlant de la commutativité ont été omises. 

\subsection{$\mathbb{F}_1:$ le corps à un élément}

On peut finalement définir le corps à un élément comme la monade 
\begin{equation*}
\F_1:=\N\cap \Z_{\infty}.
\end{equation*} Quand on regarde l'image d'un ensemble $X$ par $\F_1,$ la condition de positivité implique que les seules combinaison linéaires permises sont $\{x\}$ pour quelque $x \in X,$ ou celle où tous les coefficients sont nuls. Autrement dit: $\F_1(X)=X\sqcup 0.$ 

\smallskip

Un $\F_1-$module est donc un couple $(X,\alpha)$ où $\alpha: X\sqcup 0 \longrightarrow X$ coïncide avec l'identité sur $X$ et $\alpha(0)=0_X \in X.$ Par conséquent, on identifie les $\F_1-$modules aux ensembles ayant un point distingué. Pour obtenir une présentation de $\F_1,$ il suffit d'ajouter à la monade initiale la constante $0\in\F_1(\textbf{0}),$ de sorte que $\F_1=\F_\emptyset\langle 0^{[0]} \rangle$ est la monade libre engendrée par une constante. Cela entraîne que se donner une monade algébrique ayant une constante $0\in\Sigma(\textbf{0})$ soit équivalent à se donner un morphisme de monades $\rho: \F_1 \longrightarrow \Sigma.$ De la présentation ci-dessus, on déduit que $\F_1$ est un corps généralisé, car le seul quotient strict, $\F_\emptyset$, est sous-trivial. 

\smallskip

Puisque le monoïde sous-jacent à $\F_1$ est $\{0,1\},$ le seule idéal propre est $\{0\},$ autrement dit, $\Spec \F_1$ est réduit à un point. C'est pour cela que l'on se réfère au spectre du corps à un élément comme le point absolu. On obtient un résultat analogue pour $\F_\emptyset$, ainsi que pour les extensions cyclotomiques de l'exemple suivant. Cependant, il y a encore des géométries au-dessus de $\F_1$. En effet, dans \cite{ToVa} les auteurs montrent qu'il y a un foncteur d'extension de base des schémas relatifs à la catégorie monoïdale $(\mathfrak{SEns},\times,\ast)$ des ensembles simpliciaux$^\star$ munis du produit direct vers les schémas sur $\F_1$. Le spectre de $\F_\emptyset$ étant l'objet initial de la catégorie des schémas affines généralisés, il serait intéressant d'élucider le rôle de cette géométrie dans l'approche de Durov.  

\subsection{Extensions cyclotomiques}

De même, l'extension $\F_{1^2}$ de $\F_1$ est définie comme la monade 
\begin{equation*}
\F_{1^2}:=\Z\cap \Z_{(\infty)}=\Z \cap \Z_\infty.
\end{equation*}
Ainsi, $F_{1^2}(X)$ est formé des combinaisons formelles à au plus un coefficient entier de valeur absolue $1,$ ce qui donne $0,$ les éléments de la base $\{x\}$ et leurs opposés $-\{x\},$ de sorte que $\F_{1^2}(X)=X \sqcup -X \sqcup 0.$ On a $\F_1=\N\cap \F_{1^2}.$ Évidemment, $\F_{1^2}$ est de type fini sur $\F_1,$ engendré par une opération unaire:
\begin{equation*}
\F_{1^2}=\F_\emptyset \langle 0^{[0]}, [-]^{[1]} \ | \ -(-x)=x \rangle  = \F_1 \langle [-]^{[1]} \ | \ -(-x)=x \rangle
\end{equation*}
Aux données qui définissent un $\F_1-$module, il faut ajouter l'action de $\alpha$ sur $-X$ afin d'avoir un module sur $\F_{1^2}.$ Puisque on peut la voir comme une involution laissant invariant $0_X,$ $\F_{1^2}-$Mod est la catégorie des ensembles avec un point distingué munis d'une involution qui préserve ce point. Cette extension est d'une spéciale importance depuis les travaux d'Alain Connes et Caterina Consani, qui ont montré dans \cite{CoCa} que les groupes de Chevalley n'ont pas une structure de variété sur $\F_1$ mais sur $\F_{1^2},$ en répondant de cette façon inattendue à la question posée par Soulé dans \cite{So}.  

\smallskip

Est-ce qu'il y a d'extensions de $\F_1$ de degré plus grand que deux? Même si cela pourrait paraître paradoxale, la réponse à cette question a été trouvée plus d'une dizaine d'ans avant d'avoir une définition précise de $\F_1.$ En effet, dans le manuscrit non publié \cite{KaSm} Kapranov et Smirnov ont proposé de penser à $\F_{1^n}$ comme le monoïde formé de zéro et les racines n-ièmes de l'unité $\mu_n$. En suivant des idées de Weil et d'Iwasawa selon lesquelles ajouter des racines de l'unité est équivalent à faire une extension du corps de base, ils suggèrent qu'un schéma $X$ est défini sur $\F_{1^n}$ lorsque l'anneau des fonctions régulières sur $X$ contient les racines n-ièmes de l'unité. On peut de même penser à la clôture algébrique de $\F_1$ ou à la droite affine $\mathbb{A}^1_{\F_1}$ comme le monoïde contenant zéro et toutes les racines de l'unité. Ces idées, ainsi que le travail \cite{Hab} de Habiro, ont inspiré les définitions de coordonnées cyclotomiques et de fonctions analytiques sur $\F_1$ \cite{Ma3}.         

\smallskip

Dans le langage des monades, $\F_{1^n}=\F_1\langle \zeta_n^{[1]} | \zeta_n^nx=x \rangle$. Remarquons que $\F_1$ étant un quotient strict de $\F_{1^n},$ il ne s'agit pas d'un corps généralisé si $n\geq 2$. Un $\F_{1^n}-$module est un ensemble muni d'une action libre de $\Z/n\Z$. La correspondance $\varphi_{m,n}: \zeta_n \longmapsto \zeta_{nm}^m$ induit un plongement de $\F_{1^n}$ dans $\F_{1^{nm}}$ pour des entiers $n,m\geq 1$ quelconques. Si l'on prend la limite inductive par rapport à $\varphi_{m,n}$, on obtient l'anneau généralisé
\begin{equation*}
\F_{1^\infty}:=\varinjlim \F_{1^n}=\F_1\langle \zeta_1,\zeta_2,\ldots | \zeta_1=\textbf{e}, \ \zeta_n=\zeta_{nm}^m \rangle 
\end{equation*} Ce nouveau objet amusant acquiert dans \cite{CoCaMa} une interprétation en termes du système de Bost-Connes (cf. \cite{BC}).

\subsection{$\Z$ admet une présentation finie sur $\F_1$}

On montre d'abord que $\Z$ est de type fini sur $\F_\emptyset.$ En effet, il suffit de prendre comme opérations $0\in\Sigma_\Z(\textbf{0}), [-]\in\Sigma_\Z(\textbf{1})$ et $[+]\in\Sigma_\Z(\textbf{2}),$ soumis à l'ensemble $R$ de relations exprimant que $(\Z,+)$ est un groupe abélien: 
\begin{equation*}
x+(-x)=0, \quad x+0=x=0+x, \quad (x+y)+z=x+(y+z), \quad x+y=y+x. 
\end{equation*} On a ainsi
\begin{equation}\label{pres}
\Z=\F_\emptyset\langle [0]^{[0]}, [-]^{[1]}, [+]^{[2]} \ | \ R\rangle =\F_1\langle [-]^{[1]}, [+]^{[2]} \ | \ R\rangle=\F_{1^2}\langle [+]^{[2]} \ | \ R \rangle.
\end{equation} Si l'on sous-entend que $\Z$ est une monade algébrique commutative, on n'a besoin que de deux premières relations, de sorte que
\begin{equation*}
\Z=\F_1\langle [-]^{[1]}, [+]^{[2]} \ | \ 0+\textbf{e}=\textbf{e}=\textbf{e}+0, \ (-\textbf{e})+\textbf{e}=0\rangle
\end{equation*}
Cette présentation peut être interprété comme donnant une réponse positive à la question si $\Z$ est un anneau de polynômes qui se pose en vue de l'analogie entre les corps de nombres et les corps de fonctions discuté au début. En effet, on peut voir $\Z$ comme un quotient d'un anneau de polynômes sur $\F_1,$ dont les deux variables sont des opérations, par l'\textit{idéal} engendré par les relations ci-dessus. En particulier, $\Z$ est de type fini sur $\F_1.$ C'est alors immédiat de montrer que:

\begin{proposition} Un anneau $R$ est absolument de type fini si et seulement s'il est de type fini sur $\Z$ au sens classique, c'est-à-dire, s'il existe des éléments $x_1,\cdots,x_n\in R$ tels que $R=\Z[x_1,\ldots,x_n].$ 
\end{proposition}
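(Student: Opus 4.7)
L'id\'ee est d'exploiter la description de $\F_\emptyset\langle U\rangle$ par la propri\'et\'e de remplacement. Je traiterais d'abord la direction facile: si $R = \Z[x_1, \ldots, x_n]$ au sens classique, je prendrais comme partie g\'en\'eratrice
\begin{equation*}
U = \{0^{[0]}, [-]^{[1]}, [+]^{[2]}, [x_1]^{[1]}, \ldots, [x_n]^{[1]}\} \subset ||\Sigma_R||,
\end{equation*}
o\`u $[x_i]$ d\'esigne l'op\'eration unaire de multiplication par $x_i$. La pr\'esentation (\ref{pres}) fournit d\'ej\`a $\Sigma_\Z$ comme sous-monade engendr\'ee par $\{0^{[0]}, [-]^{[1]}, [+]^{[2]}\}$ sur $\F_\emptyset$; en composant les unaires $[x_i]$ entre elles et avec les op\'erations pr\'ec\'edentes, on atteint l'op\'eration $[p(x_1, \ldots, x_n)]$ pour tout polyn\^ome $p \in \Z[X_1, \ldots, X_n]$, ce qui \'epuise toutes les op\'erations unaires puisque $R = \Z[x_1, \ldots, x_n]$. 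Une op\'eration $n$-aire quelconque $\sum_i \lambda_i \{i\}$ s'obtient ensuite en it\'erant $[+]$ sur les $[\lambda_i]$, d'o\`u $\F_\emptyset\langle U\rangle = \Sigma_R$.

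Pour la r\'eciproque, je supposerais $\Sigma_R = \F_\emptyset\langle U\rangle$ avec $U$ finie et je d\'esignerais par $y_1, \ldots, y_k \in R$ la r\'eunion finie des coefficients des op\'erations $u = \sum_i \lambda_i^u\{i\}$ parcourant $U$. Le point central est que toute op\'eration construite par applications successives de la propri\'et\'e de remplacement \`a partir de $U$ reste \`a coefficients dans le sous-anneau $A := \Z[y_1, \ldots, y_k] \subset R$. Je le montrerais par r\'ecurrence sur la profondeur de construction: les op\'erations de base $\{k\}_\textbf{n}$ ont des coefficients dans $\{0, 1\} \subset A$, et l'\'etape inductive repose sur la description explicite de $\mu$ donn\'ee dans la section \ref{mona}, selon laquelle la composition $[u](t_1, \ldots, t_{n_u})$ correspond \`a la combinaison $\sum_i \lambda_i^u t_i$, dont les coefficients sont des sommes de produits d'\'el\'ements de $A$ et appartiennent donc \`a $A$. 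En sp\'ecialisant aux op\'erations unaires, qui s'identifient \`a $R$ via $R = \Sigma_R(\textbf{1})$, on obtient $R \subset A$, d'o\`u l'\'egalit\'e $R = \Z[y_1, \ldots, y_k]$.

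L'obstacle principal est cette invariance par composition: il faut \'ecrire soigneusement les coefficients de $[u](t_1, \ldots, t_{n_u})$ en fonction des coefficients des $t_i$ et des $\lambda_i^u$ pour v\'erifier qu'ils restent dans $A$. Ce n'est qu'un calcul direct \`a partir de la formule de multiplication de la monade $\Sigma_R$, mais c'est le v\'eritable contenu de la preuve; une fois ce point acquis, la conclusion suit imm\'ediatement dans les deux sens.
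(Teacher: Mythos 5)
Votre d\'emonstration est correcte et suit essentiellement la voie que le papier laisse implicite: l'article ne donne aucune preuve (la proposition y est d\'eclar\'ee imm\'ediate \`a partir de la pr\'esentation finie (\ref{pres}) de $\Z$ sur $\F_\emptyset$). Votre argument --- adjoindre les op\'erations unaires $[x_i]$ aux g\'en\'erateurs $0^{[0]}, [-]^{[1]}, [+]^{[2]}$ de $\Z$ pour le sens direct, et suivre par r\'ecurrence les coefficients des op\'erations engendr\'ees par la propri\'et\'e de remplacement pour la r\'eciproque --- fournit pr\'ecis\'ement les d\'etails que le texte omet.
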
   

De même, on obtient la présentation 
\begin{equation}\label{nat}
\N=\F_1\langle [+]^{[2]} \ | \ 0+\textbf{e}=\textbf{e}=\textbf{e}+0 \rangle.
\end{equation}

\smallskip

Le fait que $\Z$ soit de type fini sur $\F_1$ entraînera évidement que $\Spec\Z$ est de type fini sur $\Spec\F_1$, après avoir donné un sens rigoureux au point absolu $\Spec\F_1$. Cela contredit les prévision de Manin dans \cite{Ma4}, où la question sur quelle est la dimension de $\Spec\Z$ est abordée. Du point de vue plus orthodoxe, la dimension de Krull de $\Spec\Z$ est un et les nombres premiers sont des points zéro-dimensionnels dans $\Spec\Z$, que l'on peut considérer géométriquement comme les images du morphismes $\Spec\F_p \longrightarrow \Spec\Z$. Cependant, quelques analogies surprenantes entre les nombres premiers et les noeuds \cite{Mo}, ainsi que l'existence d'une dualité de Poincaré en dimension $3$ pour la topologie étale de $\Spec\Z$, suggèrent qu'une autre possible réponse est $\dim \Spec\Z=3$. Finalement, Manin a conjecturé que la dimension de $\Spec\Z$ est infini lorsque l'on regarde le spectre des entiers sur le point absolu. Dans l'approche de Durov, $\Spec\F_1$ admet une présentation de type fini, mais ce n'est pas clair comment on peut définir une notion de dimension. Remarquons, en tout cas, que $\Z$ est engendré par trois opérations.          

\smallskip

On montre l'utilité de décrire une monade par une présentation par des opérations et par des relations en calculant le produit tensoriel $\Z \otimes_{\F_1} \Z$ dans la catégorie des anneaux généralisés. Prenons d'abord la première présentation dans \ref{pres}. La monade commutative $\Z \otimes_{\F_\emptyset} \Z$ est donc engendré par deux constantes $c, d$ deux opérations unaires $u, v$ et deux opérations binaires $s$ et $t$ soumis aux mêmes relations. D'après l'exemple \ref{const}, la commutativité implique $c=d.$ On peut donc considérer $\Z \otimes_{\F_1} \Z$. En remplacent $y$ et $z$ par la constante $0$ dans l'équation de l'exemple \ref{bi} et en utilisant les relations, on en déduit $s(x,t)=t(x,t)$ pour tout $x,t$ dans un module $X,$ donc $s=t$. Un raisonnement pareil montre que $u$ et $v$ doivent aussi coïncider, de sorte que $\Z \otimes_{\F_1} \Z=\Z.$ Il s'agit d'un résultat en quelque mode décevant car l'on s'attendait de trouver une catégorie où le produit tensoriel de $\Z$ par soi même n'était pas trivial.   

\subsection{Les anneaux généralisés $A_N$ et $B_N.$}

Dans cette épigraphe on étudie deux anneaux qui vont être essentiels dans la construction de la compactification de $\Spec\Z.$ Soit $N\geq 1$ entier, définissons $f=\frac{1}{N}.$ D'abord, on pose $B_N=\Z[f],$ qui est donc un anneau au sens classique ayant 
\begin{equation*}
\Spec B_N=(\bigcup_{p \nmid N} pB_N) \cup (0) 
\end{equation*} On définit ensuite $A_N$ comme l'intersection $A_N:=B_N\cap \Z_{(\infty)}=B_N\cap \Z_\infty$. Tout élément de $B_N(\textbf{n})$ peut être écrit comme un $n$-uplet $(\frac{u_1}{N^k},\ldots,\frac{u_n}{N^k}),$ où $u_i \in \Z$ et $k\geq 0$ est le plus grand exposant qui apparaît dans les dénominateurs. Alors, l'intersection avec $\Z_\infty$ impose la contrainte $\sum_{i=1}^n \abs{u_i}\leq N^k$ et l'on en déduit:  
\begin{align}\label{grad}
A_N(\textbf{n})&=\{(\lambda_1,\ldots,\lambda_n)\in \Q^n: \lambda_i \in B_N, \sum_{i=1}^n \abs{\lambda_i}\leq 1\}= \nonumber \\
&=\{(\frac{u_1}{N^k},\ldots,\frac{u_n}{N^k}): k\geq 0, u_i\in \Z, \sum_{i=1}^n \abs{u_i}\leq N^k\}
\end{align} Notamment le monoïde sous-jacent à $A_N$ est l'ensemble $\abs{A_N}$ des éléments de $B_N$ à valeur absolue plus petite ou égale à 1. On a un plongement canonique $A_N \longrightarrow B_N,$ induisant donc un morphisme injectif des localisations (voir la section suivante pour une définition précise) $A_N[f^{-1}]\longrightarrow B_N[f^{-1}]=B_N.$ Soit $\lambda=(\lambda_1,\ldots,\lambda_n)\in B_N(\textbf{n}).$ Il existe un entier $k\geq 0$ tel que $\abs{\lambda_1}+\ldots+\abs{\lambda_n}\leq N^k.$ Alors $f^k\lambda=N^{-k}\lambda$ appartient à $A_N(\textbf{n}),$ ce qui revient à dire que $\lambda \in A_N[f^{-1}](\textbf{n}).$ Par conséquent, $B_N$ est une sous-monade de $A_N[f^{-1}],$ d'où l'égalité $B_N=A_N[f^{-1}].$      

\smallskip

\begin{proposition} L'anneau généralisé $A_N$ satisfait les propriétés suivantes:
\begin{enumerate}
\item C'est un anneau local généralisé d'idéal maximal $\mathfrak{p}_\infty=\{\lambda\in B_N: \abs{\lambda}\leq 1\}$.
\item À l'exception de $\mathfrak{p}_\infty$, les idéaux premiers non nuls de $A_N$ sont en bijection avec les nombres premiers $p$ ne divisant pas $N$.
\end{enumerate}
\end{proposition}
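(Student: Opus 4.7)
The plan is to reduce both statements to the explicit description of $|A_N|$ as a subset of $\Q$ and to the identification $A_N[f^{-1}]=B_N$ established just above the proposition. I read $\mathfrak{p}_\infty$ as the set of $\lambda\in|A_N|$ with $\abs{\lambda}<1$; the condition $\abs{\lambda}\leq 1$ as literally printed would describe all of $|A_N|$ and appears to be a typographical slip.

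For (1), the first step is to identify the units of the underlying multiplicative monoid $|A_N|\subset\Q$: any $\lambda\in\Z[1/N]$ with $\abs{\lambda}=1$ must equal $\pm 1$, so the unit group is exactly $\{\pm 1\}$ and $\mathfrak{p}_\infty=|A_N|\setminus\{\pm 1\}$. Next I would verify that $\mathfrak{p}_\infty$ is a sub-$A_N$-module of $A_N$, i.e.\ an ideal in Durov's sense. Stability under the $A_N$-action is a direct consequence of the triangle inequality: if $(\mu_1,\ldots,\mu_n)\in A_N(\mathbf{n})$ (so $\sum\abs{\mu_i}\leq 1$) and each $\lambda_i\in\mathfrak{p}_\infty$, then
\[
\Bigl|\sum_{i=1}^n\mu_i\lambda_i\Bigr|\leq\bigl(\max_i\abs{\lambda_i}\bigr)\sum_{i=1}^n\abs{\mu_i}<1,
\]
so the $A_N$-linear combination stays in $\mathfrak{p}_\infty$. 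Since every proper ideal must avoid the units $\pm 1$, every proper ideal is automatically contained in $\mathfrak{p}_\infty$; hence $A_N$ is local with unique maximal ideal $\mathfrak{p}_\infty$.

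For (2), I would invoke the localization correspondence for generalized rings, which yields a bijection between prime ideals of $A_N$ not containing $f=1/N$ and prime ideals of $A_N[f^{-1}]=B_N$. The crucial point is that $\mathfrak{p}_\infty$ is the \emph{only} prime of $A_N$ containing $f$; equivalently, $\sqrt{(f)}=\mathfrak{p}_\infty$. Indeed, for any $\lambda\in\mathfrak{p}_\infty$ one has $\abs{\lambda}<1$, hence $\abs{\lambda^n}\leq 1/N$ for $n$ large, i.e.\ $\lambda^n\in(f)$, while the units $\pm 1$ obviously avoid $(f)$. Therefore the nonzero primes of $A_N$ distinct from $\mathfrak{p}_\infty$ are in bijection with the nonzero primes of $B_N=\Z[1/N]$, which classically are the principal ideals $pB_N$ for primes $p\nmid N$.

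The main technical obstacle I anticipate lies not in the content but in the foundational bookkeeping inside Durov's framework: one must check that ``the ideal generated by $f$'' in the generalized sense really coincides with $\{\lambda\in|A_N|:\abs{\lambda}\leq 1/N\}$, and that the classical arguments about radicals, primality, and pullback along the localization morphism $A_N\to B_N$ transfer without modification to generalized rings. Once these identifications are in place, the units computation, the triangle-inequality verification of $\mathfrak{p}_\infty$, and the classical description of $\Spec\Z[1/N]$ combine to give both assertions.
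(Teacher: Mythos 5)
Votre preuve est correcte et suit essentiellement la même démarche que celle du texte : mêmes étapes pour la partie (2) (bijection par localisation avec $\Spec B_N$, puis l'argument $\abs{\lambda}<1 \Rightarrow \lambda^k\in(1/N)$ pour montrer que $\mathfrak{p}_\infty$ est le seul premier contenant $1/N$), et vous relevez à juste titre la coquille de l'énoncé, la preuve du texte utilisant bien l'inégalité stricte $\abs{\lambda}<1$. La seule différence, mineure, est dans la partie (1) : le texte obtient $\mathfrak{p}_\infty$ comme image réciproque de l'idéal maximal de $\Z_{(\infty)}$ par le plongement $j:A_N\longrightarrow\Z_{(\infty)}$, tandis que vous vérifiez directement la stabilité par l'action de $A_N$ via l'inégalité triangulaire, ce qui revient au même.
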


\begin{proof} Pour la première partie de la proposition, il suffit de considérer le plongement $j: A_N \longrightarrow \Z_{(\infty)}.$ Comme on a vu dans l'exemple 4.2, $\Z_{(\infty)}$ est un anneau local généralisé d'idéal maximal $\mathfrak{m}_{(\infty)}=\{\lambda \in \Q: \abs{\lambda}<1\}.$ L'image réciproque de $\mathfrak{m}_{(\infty)}$ par $j$ est l'idéal maximal $\mathfrak{p}_\infty:=j^{-1}(\mathfrak{m}_{(\infty)})=\{\lambda\in \abs{B_N}: \abs{\lambda}<1\}$. Comme $\abs{A_N}-\mathfrak{p}_\infty=\{-1,1\}$ est l'ensemble des éléments inversibles de $\abs{A_N},$ on conclut que $A_N$ est local. Pour la deuxième partie, soit $p$ premier ne divisant pas $N$. Alors $pB_N$ est un idéal premier de $B_N.$ Si l'on pose $\mathfrak{p}_p:=i^{-1}(pB_N)$, $i$ étant le plongement canonique de $A_N$ dans $B_N$, on obtient un idéal premier de $A_N$. Puisque $B_N$ est la localisation de $A_N$ dans le système multiplicatif engendré par $\frac{1}{N}$, les idéaux premiers de $A_N$ ne contenant pas $\frac{1}{N}$ sont en bijection avec $\Spec B_N$. La preuve se réduit donc à montrer que le seul idéal premier de $A_N$ contenant $\frac{1}{N}$ est l'idéal maximal, ce qui revient à dire que si $\mathfrak{p}$ est un tel idéal, alors $\mathfrak{p}_\infty \subset \mathfrak{p}$. Soit $\lambda \in \mathfrak{p}_\infty$. Comme $\abs{\lambda}<1$ il existe $k$ suffisamment grand tel que $\abs{\lambda}^k <\frac{1}{N}$. Alors $\mu=N\lambda^k \in \abs{A_N},$ donc $\lambda^k \in \mathfrak{p}$. Mais $\mathfrak{p}$ étant premier, cela implique $\lambda \in \mathfrak{p}$, donc $\mathfrak{p}_\infty \subset \mathfrak{p}.$
\end{proof}

\smallskip
La conséquence la plus importante de la proposition ci-dessus est la description du spectre de l'anneau généralisé $A_N$, où l'on voit apparaître pour la première fois une réalisation concrète du premier infini: 

\begin{equation*}
\Spec A_N=\{(0)\}\cup \{\mathfrak{p}_p: p\nmid N\}\cup \{\mathfrak{p}_\infty\}
\end{equation*} Dorénavant, on note $\xi:=(0)$, $p:=\mathfrak{p}_p$ et $\infty:=\mathfrak{p}_\infty$ par analogie avec une courbe projective. Alors $\Spec A_N$ contient un point générique $\xi$ et un seul point fermé $\infty$, dont le complément est homéomorphe à $\Spec B_N=\Spec \Z[\frac{1}{N}]$. Par conséquent, $\overline{p}=\{p,\infty\}$.  

\smallskip

\begin{corollaire} Soit $U \subsetneq \Spec A_N$ une partie non-vide. Alors $U$ est ouverte si et seulement si $\Spec A_N-U$ contient $\infty$ et un nombre fini d'idéaux $p$.  
\end{corollaire}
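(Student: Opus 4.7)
Le plan est de s'appuyer sur deux ingr�dients d�j� �tablis: d'une part, l'hom�omorphisme $\Spec A_N - \{\infty\} \cong \Spec B_N = \Spec \Z[\frac{1}{N}]$, qui d�coule de la proposition pr�c�dente et de l'identit� $B_N = A_N[f^{-1}]$; d'autre part, la structure classique de ce dernier, qui est un anneau principal de dimension de Krull $1$ dont les parties ferm�es propres sont exactement les sous-ensembles finis de points ferm�s. Le reste du travail consiste � recoller le point � l'infini.

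Pour le sens r�ciproque ($\Longleftarrow$), je proc�derais directement par adh�rence: la remarque $\overline{p} = \{p,\infty\}$ faite juste avant l'�nonc� montre que chaque $\{p_i,\infty\}$ est ferm� comme adh�rence du point $p_i$. Par cons�quent, l'ensemble $F := \{\infty,p_1,\ldots,p_k\}$ s'�crit comme r�union finie de ferm�s, $F = \overline{\{p_1\}} \cup \cdots \cup \overline{\{p_k\}}$, donc $U = \Spec A_N - F$ est ouvert.

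Pour le sens direct ($\Longrightarrow$), soit $U$ un ouvert non-vide propre, et posons $F := \Spec A_N - U = V(I)$ pour un id�al $I$ non-nul et propre de $A_N$. D'apr�s la premi�re partie de la proposition pr�c�dente, $A_N$ est local d'id�al maximal $\infty$, de sorte que tout id�al propre est inclus dans $\infty$; en particulier $I \subset \infty$, et donc $\infty \in V(I) = F$. Pour la finitude du reste, je regarderais la trace $F \cap \Spec B_N = F - \{\infty\}$. Via l'hom�omorphisme cit� (qui identifie chaque $\mathfrak{p}_p$ au premier $pB_N$), elle correspond au ferm� $V(IB_N)$ de $\Spec B_N$. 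Comme $I$ est non nul et que l'injection $A_N \hookrightarrow B_N$ reste injective apr�s localisation, $IB_N$ est un id�al non nul de $B_N$, et $V(IB_N)$ est donc un ferm� propre de $\Spec \Z[\frac{1}{N}]$: par la structure classique �voqu�e ci-dessus, il s'agit d'un ensemble fini d'id�aux maximaux, soit un nombre fini d'id�aux $p$ ne divisant pas $N$.

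L'obstacle principal n'est pas calculatoire mais conceptuel: il s'agit de s'assurer que la notion de $V(I)$ dans $\Spec A_N$, o� $I$ est un id�al au sens des anneaux g�n�ralis�s (c'est-�-dire un sous-module ad�quat), se comporte bien vis-�-vis du plongement $A_N \hookrightarrow B_N$ et redonne, apr�s retrait de $\{\infty\}$, la notion classique de ferm� dans $\Spec B_N$. Comme $A_N$ est d�fini comme intersection d'anneaux classiques inclus dans $\Q$ et que la proposition pr�c�dente identifie explicitement la bijection entre $\Spec A_N - \{\infty\}$ et $\Spec B_N$, cette v�rification se r�duit essentiellement � une traduction directe entre les deux cadres.
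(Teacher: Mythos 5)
Le texte n'offre aucune d\'emonstration de ce corollaire~: il l'\'enonce comme cons\'equence imm\'ediate de la description de $\Spec A_N$ qui pr\'ec\`ede ($\xi$ g\'en\'erique, $\infty$ unique point ferm\'e, compl\'ement de $\infty$ hom\'eomorphe \`a $\Spec B_N=\Spec\Z[\frac{1}{N}]$, et $\overline{\{p\}}=\{p,\infty\}$), et la preuve du lemme analogue pour $\widehat{\Spec\Z}^{(N)}$ donn\'ee plus loin suit exactement ce sch\'ema ensembliste. Votre argument est correct et comble ces d\'etails. Pour le sens r\'eciproque vous faites pr\'ecis\'ement ce que le texte sugg\`ere (r\'eunion finie d'adh\'erences de points)~; signalez seulement le cas $k=0$, o\`u $F=\{\infty\}$ n'est pas la r\'eunion des $\overline{\{p_i\}}$ mais reste ferm\'e comme adh\'erence de l'unique point ferm\'e. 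Pour le sens direct vous empruntez une route plus alg\'ebrique ($F=V(I)$ avec $I$ id\'eal propre non nul, donc $I\subset\mathfrak{p}_\infty$ par localit\'e de $A_N$, puis finitude de $V(IB_N)$ dans $\Spec\Z[\frac{1}{N}]$) l\`a o\`u l'argument purement topologique suffit~: un ferm\'e propre non vide ne contient pas $\xi$, contient l'adh\'erence de chacun de ses points, donc $\infty$ puisque $\overline{\{p\}}=\{p,\infty\}$, et sa trace sur l'ouvert $\Spec A_N-\{\infty\}\cong\Spec\Z[\frac{1}{N}]$ est un ferm\'e ne contenant pas le point g\'en\'erique, donc fini. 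Les deux voies aboutissent~; la v\^otre demande en passant de v\'erifier que tout ferm\'e est de la forme $V(I)$ pour un id\'eal (en rempla\c{c}ant $M$ par l'id\'eal engendr\'e) et que, dans le cadre g\'en\'eralis\'e, tout id\'eal propre d'un anneau local est contenu dans l'id\'eal maximal --- deux faits valables ici mais que la version topologique \'evite. Quant \`a l'obstacle que vous signalez (identification de $V(I)\cap\Spec B_N$ avec $V(IB_N)$), il est d\'ej\`a absorb\'e par l'hom\'eomorphisme affirm\'e dans le texte~: il suffit que la trace d'un ferm\'e sur ce sous-espace ouvert soit ferm\'ee, sans expliciter l'id\'eal \'etendu.
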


On finit l'étude de l'anneau généralisé $A_N$ en énonçant un théorème, dont la démonstration se trouve dans \cite[7.1.26]{Du}, qui sera important dans la discussion sur la \textit{dimension} de la compactification de $\Spec\Z$. 

\begin{theoreme}\label{an} L'anneau généralisé $A_N$ admet une présentation finie sur $\F_1,$ engendrée par les opérations 
\begin{equation*}
s_p(\{1\},\ldots,\{p\})=\frac{1}{p}\{1\}+\ldots+\frac{1}{p}\{p\},
\end{equation*} où $p$ est un diviseur premier de $N,$ et les relations 
\begin{align*}
s_p(\{1\},\ldots,\{1\})&=\{1\} \\
s_p(\{1\},\ldots,\{p\})&=s_n(\{\sigma(1)\},\ldots,\{\sigma(p)\}), \ \quad \sigma \in \mathcal{S}_p \\
s_p(\{1\},\ldots,\{p-1\},-\{p-1\})&=s_n(\{1\},\ldots,\{p-2\},0,0)
\end{align*}
\end{theoreme}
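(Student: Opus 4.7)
Le plan est de construire un morphisme de monades
$\rho: \F_1\langle (s_p)_{p\mid N} \ | \ R\rangle \longrightarrow A_N$
envoyant $s_p$ sur la moyenne arithm\'etique $(x_1,\ldots,x_p)\mapsto \frac{1}{p}(x_1+\cdots+x_p)$, puis de montrer que $\rho$ est un isomorphisme d'anneaux g\'en\'eralis\'es. La bonne d\'efinition est imm\'ediate: l'idempotence $s_p(x,\ldots,x)=x$ et l'invariance par permutation se lisent directement sur la formule, et la simplification $s_p(\{1\},\ldots,\{p-1\},-\{p-1\}) = s_p(\{1\},\ldots,\{p-2\},0,0)$ traduit l'annulation $\{p-1\}+(-\{p-1\})=0$ dans $A_N$. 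On note que l'op\'eration unaire $[-]$ appara\^\i t implicitement dans le troisi\`eme axiome, ce qui revient \`a d\'emarrer la pr\'esentation sur $\F_{1^2}$ (ou \`a enrichir $\F_1$ de la sym\'etrie correspondante) avant d'ajouter les $s_p$.

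Pour la surjectivit\'e, je propose une r\'ecurrence sur le poids $\sum_i e_i$ du d\'enominateur $d=\prod p_i^{e_i}$ (avec $p_i\mid N$) d'un \'el\'ement $\lambda = (u_1/d,\ldots,u_n/d)\in A_N(\mathbf{n})$ v\'erifiant $\sum_i |u_i|\leq d$. Pour $d=1$ on a $\lambda\in\{0\}\cup\{\pm\{i\}\}$, qui sont par construction dans l'image. Sinon, on choisit un facteur premier $p\mid d$ et l'on cherche $\lambda = s_p(\mu_1,\ldots,\mu_p)$ avec chaque $\mu_j$ de d\'enominateur $d/p$: en posant $\mu_j = (w_{j,1}/(d/p),\ldots,w_{j,n}/(d/p))$, il faut trouver des entiers $w_{j,i}$ de m\^eme signe que $u_i$ tels que $\sum_j w_{j,i} = u_i$ et $\sum_i |w_{j,i}|\leq d/p$. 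C'est un probl\`eme de flot entier dans un graphe biparti \`a $n$ sources de capacit\'e $|u_i|$ et $p$ puits de capacit\'e $d/p$, dont l'existence de solution enti\`ere d\'ecoule de la comparaison globale $\sum_i |u_i|\leq d = p\cdot(d/p)$ par le th\'eor\`eme de max-flot/min-coupe. On it\`ere; les coefficients nuls ou oppos\'es produits en cours de route sont g\'er\'es par la relation de simplification combin\'ee \`a la sym\'etrie.

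Le point d\'elicat, et le principal obstacle, est l'injectivit\'e de $\rho$: il faut v\'erifier que toute identit\'e dans $A_N$ entre deux compositions de $s_p$'s d\'erive des trois familles de relations modulo les axiomes de monade commutative. La strat\'egie naturelle est d'exhiber une forme normale sur les expressions dans $\F_1\langle (s_p) \ | \ R\rangle(\mathbf{n})$: la sym\'etrie permet de r\'eordonner les entr\'ees, la simplification d'\'eliminer les paires oppos\'ees et d'aplatir les entr\'ees nulles, et l'idempotence de compresser les r\'ep\'etitions. L'obstacle technique consiste \`a montrer la confluence de ce syst\`eme de r\'e\'ecriture pour les compositions crois\'ees $s_p\circ s_q$ avec $p\neq q$: il s'agit de relever au niveau de la pr\'esentation la commutativit\'e des moyennes $\frac{1}{p}\sum$ et $\frac{1}{q}\sum$ (valable dans $A_N$ par commutativit\'e de $\Q$), ce qui, combin\'e avec les axiomes g\'en\'eraux de monade alg\'ebrique commutative, fournit les identit\'es manquantes. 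Une fois cette confluence \'etablie, la forme normale est en bijection avec les suites $(u_1/d,\ldots,u_n/d)$ v\'erifiant $\sum |u_i|\leq d$, soit exactement $A_N(\mathbf{n})$, ce qui conclut.
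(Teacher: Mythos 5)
Le m\'emoire ne d\'emontre pas ce th\'eor\`eme: il renvoie explicitement \`a \cite[7.1.26]{Du}, de sorte qu'il n'y a pas de preuve interne \`a laquelle comparer votre tentative. Sur le fond, votre v\'erification des relations et votre argument de surjectivit\'e sont corrects: la r\'ecurrence sur le poids du d\'enominateur, ramen\'ee \`a un probl\`eme de transport entier (r\'epartir les $|u_i|$ sur $p$ puits de capacit\'e $d/p$ sous la contrainte $\sum_i |u_i|\leq d$), fournit bien une \'ecriture $\lambda=s_p(\mu_1,\ldots,\mu_p)$ avec des $\mu_j$ de d\'enominateur strictement plus petit, et l'int\'egralit\'e des flots garantit des $w_{j,i}$ entiers de m\^eme signe que les $u_i$. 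Votre remarque selon laquelle l'op\'eration $[-]$ doit figurer parmi les g\'en\'erateurs (donc que la pr\'esentation est en r\'ealit\'e sur $\F_{1^2}$, en accord avec la ligne $\Z_{(\infty)}$ du tableau) est \'egalement juste: les $s_p$ et la constante $0$ seuls n'engendrent que des combinaisons \`a coefficients positifs.

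Le point qui manque est pr\'ecis\'ement celui que vous d\'esignez vous-m\^eme comme \emph{le principal obstacle}: l'injectivit\'e, c'est-\`a-dire le fait que toute identit\'e entre compositions de $s_p$ valable dans $A_N$ se d\'eduit des trois familles de relations et des axiomes de monade alg\'ebrique commutative. Vous proposez une forme normale et un argument de confluence pour les compositions crois\'ees $s_p\circ s_q$, mais vous ne l'\'etablissez pas; or c'est l\`a que r\'eside l'essentiel du contenu du th\'eor\`eme, la surjectivit\'e seule ne donnant qu'une pr\'esentation de $A_N$ comme \emph{quotient} de la monade $\F_{1^2}\langle (s_p)_{p\mid N}\mid R\rangle$, pas un isomorphisme. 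Tant que la confluence --- ou, de fa\c{c}on \'equivalente, l'unicit\'e de la forme normale et sa bijection avec les $n$-uplets $(u_1/d,\ldots,u_n/d)$ v\'erifiant $\sum_i|u_i|\leq d$ --- n'est pas d\'emontr\'ee, la preuve est incompl\`ete; c'est exactement la partie trait\'ee dans \cite[7.1.26]{Du}.
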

  
\begin{table}[ht]
\caption{Exemples d'anneaux généralisés}
\centering
\begin{tabular}{c c c c c}
\\[0.5ex] \hline 
Monade & Hypoadditive & Hyperadditive & Corps & Présentation \\
\hline 
& & & & \\
$\Z$ & oui & oui & non & (\ref{pres}) \\[0.3ex]
$\N$ & oui & oui & non & (\ref{nat}) \\[0.3ex]
$\Z_\infty$ & oui & non & non & ? \\[0.3ex]
$\Z_{(\infty)}$ & oui & non & non & $\F_{1^2}\langle \{s_n\}_{n>1} \rangle$ \\[0.3ex]
$\overline{\Z}_\infty$ & oui & non & non & ? \\[0.3ex]
$\F_\emptyset$ & non & non & non & monade initiale \\[0.3ex]
$\F_1$ & oui & non & oui & $\F_\emptyset[0^{[0]}]$\\[0.3ex]
$\F_{1^n}$ & oui & non & non & $\F_1\langle \zeta_n^{[1]} | \zeta_n^nx=x \rangle$ \\[0.3ex]
$\F_{1^{\infty}}$ & oui & non & non & non finie \\[0.3ex]
$\F_\infty$ & non & non & oui & (\ref{res}) \\[0.3ex]
$A_N$ & oui & non & non & theor. \ref{an} \\[0.3ex]
$B_N$ & oui & oui & non & $B_N=\Z[N^{-1}]$ \\
& & & & \\
\hline 
\end{tabular}
\end{table}

\section{Vers la compactification de $\Spec\Z$}

\subsection{Schémas généralisés}

On a déjà remarqué dans l'introduction que l'étape délicate de la nouvelle théorie consistait en définir une catégorie juste pour remplacer les anneaux commutatifs usuels. Après cela, la construction d'un schéma affine généralisé est totalement analogue à celle provenant de la géométrie algébrique classique. Le travail le plus dur a été, donc, déjà fait. Dans cette section, on définit d'abord ce que l'on entend par localisation et par idéaux premiers, puis on construit le spectre d'un anneau généralisé comme espace topologique et on le munit d'un faisceau d'anneaux généralisés, ce qui nous permet d'arriver à la définition de espace annelé généralisé.    

\smallskip

Soit $A$ un anneau généralisé et soit $S \subset \abs{A}$ un sous-monoïde. On appelle localisation de $A$ l'objet initial $A[S^{-1}]$ dans la catégorie des couples $(B,\rho),$ où $B$ est un anneau généralisé et $\rho: A \longrightarrow B$ est un morphisme tel que tous les éléments de $\rho_1(S)\subset \abs{B}$ sont inversibles. Lorsque $S$ est le système multiplicatif engendré par un élément $f\in\abs{A},$ on écrit $A[f^{-1}]$ ou $A_f$. Une description plus explicite est la donnée par la présentation suivante, dans laquelle on considère les opérations unaires $s^{-1}$ pour tout $s\in S:$ 
\begin{equation*}
A[S^{-1}]=A\langle (s^{-1})_{s\in S} | ss^{-1}=\textbf{e}=s^{-1}s \rangle. 
\end{equation*} Comme dans le cas classique, $A[S^{-1}](\textbf{n})$ contient les classes d'équivalence des couples $(a,s)\in A(\textbf{n}) \times S$ modulo la relation $(x,s)\sim (y,t) \Leftrightarrow \exists u\in S$ tel que $utx=usy$. 

\smallskip

Soit $A$ un anneau généralisé. On appelle idéal de $A$ tout $A$-sous-module du monoïde sous-jacent $\abs{A}$. En imitant la théorie usuelle, les idéaux premiers sont les idéaux $\mathfrak{p}$ tels que $S_\mathfrak{p}:=\abs{A}-\mathfrak{p}$ est un système multiplicatif, et les idéaux maximaux sont les éléments maximaux pour l'ordre partiel défini par l'inclusion dans l'ensemble d'idéaux propres de $A$. Comme dans l'algèbre commutative, tout idéal maximal est premier et tout anneau généralisé admet au moins un idéal maximal. Quand il n'y en a plus, l'anneau est dit local. Cela équivaut à dire que tous les éléments du complément de l'idéal dans $\abs{A}$ sont inversibles. On est alors en état de construire le spectre premier d'un anneau généralisé:

\smallskip

\begin{definition} Soit $A$ un anneau généralisé. On appelle \textit{spectre premier} de $A$, et on le note $\Spec A,$ l'ensemble d'idéaux premiers de $A$ muni de la topologie, dite de Zariski, dont les fermés sont les ensembles
\begin{equation*}
V(M):=\{\mathfrak{p}\in \Spec A: M \subset \mathfrak{p}\},
\end{equation*} $M$ étant un sous-ensemble quelconque de $\abs{A}$. Une base de cette topologie est formée des ouverts principaux $D(f):=\{\mathfrak{p}: f\notin \mathfrak{p}\},$ où $f\in\abs{A}$.  
\end{definition}  

\smallskip

Chaque $A-$module $M$ définit un faisceau d'anneaux généralisés $\tilde{M}$ sur $\Spec A$ dont les sections sont $\Gamma(D(f),M)=M_f$ pour chaque ouvert principal $D(f)$ correspondant à un élément $f$ du monoïde $\abs{A}.$ Lorsque $M=A$ on obtient le faisceau structural $\mathcal{O}_{\mathrm{Spec}A}.$ Tous les $\tilde{M}$ sont alors des $\mathcal{O}_{\mathrm{Spec} A}$-modules.

\smallskip

\begin{definition} Un \textit{espace annelé généralisé} est la donnée d'un couple $(X, \mathcal{O}_X)$ formé d'un espace topologique et d'un faisceau d'anneaux généralisés sur $X$. Un morphisme d'espaces annelés généralisés est un couple $(f,f^\sharp): (X, \mathcal{O}_X)\longrightarrow (Y,\mathcal{O}_Y)$ formé d'une application continue $f: X \longrightarrow Y$ et d'un morphisme de faisceaux $f^\sharp: \mathcal{O}_Y \longrightarrow f_\ast\mathcal{O}_X$, où $f_\ast\mathcal{O}_X$ est l'image directe du faisceau $\mathcal{O}_X$ par $f$. On dit que $(X,\mathcal{O}_X)$ est localement annélé si pour tout $P\in X$ l'anneau des germes $\mathcal{O}_{X,P}$ est local.  
\end{definition}

\smallskip    

On obtient de même une notion d'isomorphisme d'espaces annelés généralisés. Les schémas affines généralisés sont alors les espaces annelés les plus simples et l'on va construire les schémas généralisés en les recollant:

\smallskip

\begin{definition}
Un \textit{schéma affine généralisé} $X$ est un espace localement annelé généralisé qui est isomorphe à $(\Spec A, \mathcal{O}_{\mathrm{Spec}A})$ pour un certain anneau généralisé $A$. Un $\textit{schéma généralisé}$ est alors un espace annélé généralisé $(X,\mathcal{O}_X)$ qui admet un recouvrement ouvert par des schémas affines généralisés. Lorsque $\Gamma(X,\mathcal{O}_X)$ admet une constante, ce qui revient à dire que $\Gamma(X,\mathcal{O}_X(\textbf{0}))$ est non-vide, on dit que $X$ est un schéma sur $\F_1$. 
\end{definition} 

\smallskip

Un morphisme entre deux schémas généralisés $X$ et $Y$ est un morphisme $f$ des espaces annélés généralisés $(X,\mathcal{O}_X)$ et $(Y,\mathcal{O}_Y)$ tel que pour tout ouvert affine $\Spec B=V \subset Y$ et $\Spec A=U \subset X$ vérifiant $f(V)\subset U$, la restriction $f|V: V \longrightarrow U$ soit induite par un morphisme d'anneaux généralisés $\varphi: A \longrightarrow B$. On obtient ainsi la catégorie des schémas généralisés. Comme dans la géométrie algébrique usuelle du point de vue fonctoriel, un schéma généralisé peut être défini d'une manière plus intrinsèque comme un foncteur contravariant $\mathcal{S}: \mathfrak{Gen} \longrightarrow \mathfrak{Ens}$ de la catégorie des anneaux généralisés vers la catégorie des ensembles qui est localement isomorphe au spectre d'un anneau pour la topologie de Zariski. On se refère à l'annexe A pour la construction fonctorielle complète, inspiré de \cite{ToVa}, d'un schéma généralisé sur $\F_1$. 

\subsection{Compactification à la main}

Dans les exemples précédents, on a obtenu des descriptions complètes des spectres
\begin{equation*}
\Spec \Z=\{\xi,p,\ldots\}, \quad  \Spec A_N=\{\xi, p, \ldots, \infty\}_{p\nmid N}, \quad \Spec B_N=\{\xi,p,\ldots\}_{p\nmid N}.
\end{equation*} On définit $\widehat{\Spec \Z}^{(N)}$ comme le schéma affine généralisé obtenu en recollant $\Spec \Z$ et $\Spec A_N$ le long de leurs sous-ensembles ouverts principaux homéomorphes à $\Spec B_N$: 
\begin{equation*}
\widehat{\Spec \Z}^{(N)}:=\Spec \Z \coprod_{\Spec B_N} \Spec A_N 
\end{equation*} Ainsi, $\widehat{\Spec \Z}^{(N)}=\Spec \Z \cup \{\infty\}$ contient déjà un point additionnel $\infty$ correspondant à la valuation archimédienne de $\Q$. Le point $\xi$ est de nouveau générique et les points fermés sont $\infty$ et les premiers $p$ divisant $N$. Par conséquent:

\smallskip

\begin{lemme} Une partie non-vide $U \subsetneq \widehat{\Spec\Z}^{(N)}$ est ouverte si et seulement si elle contient $\xi$, son complément est fini et soit $\infty \notin U$ soit $\Spec B_N \subset U.$ 
\end{lemme}

\begin{proof} C'est clair que le point générique est inclus dans toutes les parties ouvertes non vides et que le complément de $U$ doit être fini, puisque $\Spec \Z$ est ouvert dans $\widehat{\Spec\Z}^{(N)}$. Soit alors $U$ ouvert contenant $\xi$ à complément fini $\{p_1,\ldots,p_n\}$. Supposons qu'un des $p_i$ ne divise pas $N$. Alors, $\overline{\{p_i\}}=\{p_i,\infty\}$, donc $\infty \notin U$. Réciproquement, si $U$ est une partie de $\widehat{\Spec\Z}^{(N)}$ vérifiant les trois conditions de l'énoncé, son complément est un ensemble fini $S=\{p_1,\ldots,p_n\}$ tel que $p_i\neq \xi$ pour tout $1\leq i\leq n$ et que, soit $\infty$ est un des $p_i$ et alors $S$ est fermé, soit tous les premiers $p_i$ sont parmi les diviseurs de $N$, donc $S$ est une réunion de points fermés.    
\end{proof}

D'après le lemme, la topologie de $\widehat{\Spec \Z}$ dépend fortement du choix de $N$. Puisque $\xi$ et $p$ appartiennent au sous-schéma ouvert $\Spec \Z$, lorsque l'on calcule les anneaux des germes $\mathcal{O}_p$, on obtient le résultat attendu $\mathcal{O}_\xi=\Q$ et $\mathcal{O}_p=\Z_{(p)}$. Cependant, $\Spec A_N$ étant le voisinage ouvert à l'infini, l'anneau des germes n'est pas $\Z_{(\infty)}$ comme l'on voudrait, mais $\mathcal{O}_\infty=A_{N,\mathfrak{p}}=A_N$. Ces deux observations motivent l'idée de faire disparaître $N$ en prenant une certaine limite projective$^\star$. Ensuite, on décrit le système filtré que l'on va considérer.

\smallskip

Soient $N,M>1$ des entiers. On construit un morphisme 
\begin{equation*}
f:=f_N^{NM}: \widehat{\Spec\Z}^{(NM)} \longrightarrow \widehat{\Spec\Z}^{(N)} 
\end{equation*} en choisissant d'abord des sous-schémas ouverts $U_1\approx \Spec\Z,\ U_2\approx \Spec A_N$ de $\widehat{\Spec\Z}^{(N)}$ et $U_1'\approx \Spec \Z,\ U_2'\approx \Spec A_{NM}$ de $\widehat{\Spec\Z}^{(NM)}$ respectivement. Ainsi, 
\begin{equation*}
\widehat{\Spec\Z}^{(N)}=U_1 \cup U_2, \quad \widehat{\Spec\Z}^{(NM)}=U_1'\cup U_2', 
\end{equation*} l'intersection étant $W=U_1\cap U_2=U_1'\cap U_2'=\Spec B_N$. On pose alors $V_1=U_1'$ et $V_2=U_2'\cup W$. On a de nouveau $V_1 \cap V_2=W$. Se donner le morphisme f est donc équivalent à se donner des morphismes $f_i: V_i \longrightarrow U_i, \ i=1,2$, qui coïncident sur $W=V_1\cap V_2$ et tels que $f_i^{-1}(U_1\cap U_2)=V_1\cap V_2$.  

\smallskip

Puisque $U_1=V_1=\Spec\Z$, on peut choisir $f_1$ comme étant l'identité sur $\Spec\Z.$ Alors, $f_{1|W}=\id_W$. Pour $f_2$ on peut faire à peu près la même chose, quitte à utiliser la correspondance bijective entre les morphismes $f:V_2 \longrightarrow U_2$ dans la catégorie des schémas généralisés et les morphismes $\varphi: \Gamma(U_2,\mathcal{O}) \longrightarrow \Gamma(V_2,\mathcal{O})$ dans la catégorie des anneaux généralisés. En effet, $U_2$ étant $\Spec A_N$, le premier terme est juste $\Gamma(U_2,\mathcal{O})=A_N$. En ce qui concerne le deuxième:
\begin{equation*}
\Gamma(V_2,\mathcal{O})=\Gamma(U_2',\mathcal{O})\times_{\Gamma(U_2'\cap W,\ \mathcal{O}_{\mathrm{Spec}\Z})} \Gamma(W, \mathcal{O}). 
\end{equation*} Puisque $U_2'=\Spec A_{NM}$ et $W=\Spec B_N,$ il suffit de calculer les sections de l'intersection. On déduit de l'égalité
\begin{equation*}
\Spec A_{NM} \cap \Spec \Z \cap \Spec B_N=\Spec B_{NM} \cap \Spec B_N=\Spec B_N 
\end{equation*} que $\Gamma(U_2'\cap W,\mathcal{O}_{\mathrm{Spec}Z})=B_{NM}$. Par conséquent:
\begin{equation*}
\Gamma(V_2,\mathcal{O})=A_{NM}\times_{B_{NM}} B_N=A_{NM}\times_\Q B_N=A_{NM} \cap B_N=A_N,
\end{equation*} où l'on a appliqué encore une fois le fait que $\Gamma(\Spec \mathrm{R}, \mathcal{O})=\mathrm{R}$ (cf. \cite[Prop. 2.2]{Har}), qui reste encore valable dans le cas des anneaux généralisés. Cela montre que les morphismes $f_2: V_2 \longrightarrow U_2$ sont en bijection avec les morphismes $\varphi: A_N \longrightarrow B_N$. On choisit donc l'image $f_2$ de $\varphi=\id_{A_N}$ par cette correspondance. La même construction prouve que $f_{2|W}=\id_W$. En effet, $f_{2|W}$ est induite par le plongement canonique de $A_N$ dans $B_N$, donc il s'agit d'une immersion ouverte de $\Spec B_N$ dans $\Spec A_N$.    

\smallskip

Il reste à vérifier la condition sur les images réciproques. Dans le premier cas, on a 
\begin{equation*}
f_1^{-1}(U_1\cap U_2)=f_1^{-1}(D_{U_1}(N))=D_{V_1}(N)=W=V_1\cap V_2.
\end{equation*} Dans le deuxième cas, 
\begin{align*}
f_2^{-1}(U_1 \cap U_2)&=f_2^{-1}(D_{U_2}(1/N))=D_{U_2'}(1/N)\cup D_W(1/N)=\Spec A_{NM}[(1/N)^{-1}]\cup W \\
&=\Spec B_{NM} \cup \Spec B_N=\Spec B_N=W,
\end{align*} car d'après 4.7, $B_{NM}$ est la localisation de $A_{NM}$ dans $\frac{1}{N}$. Cela complète la construction du morphisme $f$. Lorsque $M|N^k$ pour quelque $k\geq 1$, les anneaux généralisés $A_{NM}$ et $B_{NM}$ coïncident avec $A_N$ et $B_N$, donc $f_N^{NM}$ est l'identité. Autrement, il existe un premier $p$ divisant $M$ mais pas $N$. Ce point est fermé dans $\widehat{\Spec\Z}^{(NM)}$ mais pas dans $\widehat{\Spec\Z}^{(N)}$, ce qui entraîne que l'application $f_N^{NM}$ n'est pas un homéomorphisme, donc elle n'est pas un isomorphisme non plus. 

\smallskip

Ordonnons l'ensemble des entiers plus grands que 1 par la relation de divisibilité. Si $M$ divise $N$ et l'on note par $K$ le quotient, soit $f_M^N:=f_M^{MK}$, avec la convention $f_N^N=\id$ lorsque $N=M$. Ces fonctions étant transitives, on obtient le système projectif filtré
\begin{equation*}
\widehat{\Spec\Z}^\bullet=(\{\widehat{\Spec\Z}^{(N)}\}_{N>1}, \{f_M^N\}_{M|N})
\end{equation*} 

\smallskip

\begin{definition}
On appelle \textit{compactification du spectre} de $\Z$ la limite projective du système ci-dessus dans la catégorie des pro-schémas généralisés, c'est-à-dire: 
\begin{equation*}
\widehat{\Spec \Z}:=\varprojlim_{N>1} \widehat{\Spec \Z}^{(N)}
\end{equation*}
\end{definition}     

\smallskip

On étudie d'abord la topologie de $\widehat{\Spec\Z}$. Puisque tous les objets du système ont le même ensemble sous-jacent, au niveau des ensembles, la compactification de $\Spec\Z$ n'est autre chose que $\Spec\Z\cup \{\infty\}$. Dans $\widehat{\Spec\Z}^{(N)}$ un point $p$ était fermé si et seulement si $p|N$. En traitant tous les entiers plus grand que 1 en même temps dans la construction de la limite, on peut toujours choisir un multiple $M$ de $p$ de sorte que le point soit fermé dans tous les $\widehat{\Spec\Z}^{N}$ tels que $M|N$. Cela démontre le lemme suivant:

\begin{lemme} Tous les points de $\widehat{\Spec\Z}$ sauf le point générique $\xi$ sont fermés. Par conséquent, une partie non-vide $U$ de $\widehat{\Spec\Z}$ est ouverte si et seulement si elle contient $\xi$ et son complément est fini. 
\end{lemme} 

En ce sens, $\widehat{\Spec \Z}$ ressemble beaucoup à une courbe projective sur $\Z$, ce que l'on voulait avoir pour renforcer l'analogie entre les corps des nombres et les corps de fonctions. De plus, lorsque l'on calcule l'anneau des germes à l'infini, on trouve la limite
\begin{equation*}
\mathcal{O}_{\widehat{\Spec\Z},\infty}=\varinjlim_{N>1} \mathcal{O}_{\widehat{\Spec\Z}^{(N)},\infty}=\varinjlim A_N=\Z_{(\infty)},
\end{equation*} car l'on a défini $A_N$ comme étant l'intersection $\Z_{(\infty)}\cap B_N$ et la limite de $B_N=\Z[1/N]$ lorsque $N\rightarrow \infty$ est bien $\Z$.   
   
\subsection{Description en termes de la construction $\mathrm{Proj}$}

Le fait que la topologie de $\widehat{\Spec\Z}$ soit la même que celle d'une courbe projective suggère la possibilité d'imiter la construction $\mathrm{Proj}$ des schémas projectives afin d'avoir une description plus intrinsèque de la compactification de $\Spec\Z$. Soient $\mathcal{C}$ une catégorie monoïdale et $\Delta$ un monoïde commutatif tels que les coproduits indexés par des sous-ensembles de $\Delta$ existent dans $\mathcal{C}$ et qu'ils commutent avec $\otimes$. Une algèbre $\Delta$-graduée dans $\mathcal{C}$ est alors une famille $\{S^\alpha\}_{\alpha\in\Delta}$ d'objets de $\mathcal{C}$ munis d'un morphisme d'identité $\epsilon: \id_\mathcal{C} \longrightarrow S^0$ et de morphismes de multiplication $\mu_{\alpha,\beta}: S^\alpha \otimes S^\beta \longrightarrow S^{\alpha+\beta}$ vérifiant les relations d'associativité et d'unité évidentes. Lorsque l'on considère la catégorie monoïdale des endofoncteurs sur les ensembles, on obtient des monades graduées:

\begin{definition} Soit $\Delta$ un monoïde commutatif, par exemple $\N$ ou $\Z$. Une collection d'ensembles $\{R^\alpha(\textbf{n})\}_{\alpha\in\Delta, n\geq 0}$ et d'applications $S^\alpha(\varphi): S^\alpha(\textbf{n}) \longrightarrow S^\alpha(\textbf{m})$ définies pour chaque $\varphi: \textbf{n} \longrightarrow \textbf{m}$ tels que le foncteur 
\begin{equation*}
S^\alpha: \underline{\N} \longrightarrow \Ens, \quad \textbf{n} \longmapsto S^\alpha(\textbf{n})
\end{equation*} admet une extension unique à un endofoncteur algébrique $S^\alpha$ est une monade algébrique $R$ s'il existe un morphisme d'identité $\epsilon: \id_\Ens \longrightarrow S^0$ et des morphismes de multiplication $\mu_{n,\beta}^{(k,\alpha)}: S^\alpha(\textbf{k})\times S^\beta(\textbf{n})^k \longrightarrow S^{\alpha+\beta}(\textbf{n})$ vérifiant les versions graduées des axiomes de monade algébrique. Le monoïde sous-jacent à $R$ est le monoïde gradué $\abs{R}=\sqcup_{\alpha \in \Delta} \abs{R^\alpha}$ et l'ensemble d'opérations est l'ensemble gradué $||R||=\sqcup_{\alpha \in \Delta} ||R^\alpha||$. On dit que $R$ est commutative si toutes les opérations de $||R||$ commutent au sens de la définition 10. On obtient alors des anneaux généralisés gradués.  
\end{definition} 

Soit $R$ un anneau généralisé $\N$-gradué. Notons par $\abs{R}^+$ le sous-monoïde de $\abs{R}$ formé des éléments de degré non nul. Pour chaque $f\in \abs{R}^+$, soit $R_{(f)}$ la composante de degré zéro de la localisation $R_f=R[f^{-1}]$. Remarquons que si $g\in \abs{R_{(f)}}$, alors $(R_{(f)})_{(g)}$ est isomorphe à $R_{(fg)}$. Le monoïde sous-jacent à $R$ étant commutatif, on obtient le même résultat lorsque l'on localise d'abord en $g$ et puis en $f$. Soit $D_+(f)$ le schéma affine généralisé $\Spec R_{(f)}$. Si $g \in \abs{R}^+$ est un autre élément, $D_+(f)$ et $D_+(g)$ contiennent des sous-schémas ouverts isomorphes à $D_+(fg)$, donc on peut les recoller le long $D_+(fg)$.

\begin{definition} Soit $R$ un anneau généralisé gradué. On appelle \textit{spectre projectif} de $R$   
\begin{equation*}
\mathrm{Proj\ } R=\coprod_{D_+(fg), \ f,g \in \abs{R}^+} D_+(f),
\end{equation*} qui est un schéma généralisé ayant un recouvrement ouvert affine $\{D_+(f)\}_{f\in \abs{R}^+}$ tel que $D_+(f)\cap D_+(g)\cong D_+(fg)$. 
\end{definition}

\begin{exemple} La construction ci-dessus permet de définir l'espace projectif sur un anneau généralisé $R$ quelconque comme le spectre projectif de l'algèbre de polynômes à $n+1$ indéterminées de degré un, autrement dit: 
\begin{equation*}
\mathbb{P}_R^n:=\mathrm{Proj\ } R[T_0^{[1]},\ldots,T_n^{[1]}].
\end{equation*} D'après la définition, la famille $\{D_+(T_i)\}_{0\leq i \leq n}$ constitue un recouvrement ouvert de $\mathbb{P}_R^n$. Ce n'est pas difficile à voir que chaque $D_+(T_i)$ est isomorphe à $\mathbb{A}^n_R:=\Spec[T_1^{[1]},\ldots,T_n^{[1]}]$. Notamment, la droite projective $\mathbb{P}_{\F_1}^1=\mathrm{Proj\ } \F_1[T_0^{[1]},T_1^{[1]}]$ est obtenu en recollant deux copies de $\A^1_{\F_1}=\Spec \F_1[T]=\{(0), (T)\}$ le long du point générique. Elle contient donc trois points $\{\xi, 1,\infty \}$ correspondant aux idéaux $(T_0)$, $(T_1)$ et $(0)$, dont les deux premiers sont fermés. En général, le cardinal de l'espace projectif $\mathbb{P}^n(\F_1)$ construit ainsi est $2^{n+1}-1$, ce qui n'accorde pas avec le point de vue de Tits.     
\end{exemple}

Après cet interlude théorique, on est en état de décrire $\widehat{\Spec\Z}$ comme un (pro)schéma projectif. En vue de la seconde égalité dans \ref{grad}, définissons $R$ comme l'anneau généralisé gradué dont la composante en degré $d$ est donné par 
\begin{equation*}
R_d(\textbf{n})=\{(\lambda_1,\ldots,\lambda_n)\in \Z^n: \sum_{i=1}^n \abs{\lambda_i}\leq N^d\} 
\end{equation*} On peut plonger $R$ dans $\Z[T]$ en identifiant $(\lambda_1,\ldots,\lambda_n)$ avec $T^d(\lambda_1,\ldots,\lambda_n)$. Les éléments de degré un de $R$ sont alors ceux de la forme $uT^d$, où $d\geq 0, u\in\Z, \abs{u}\leq N^d$. En particulier, $f_1:=T$ et $f_2:=NT$ appartient à $\abs{R}$. Alors, on peut montrer \cite[7.1.44]{Du}: 

\begin{lemme} $\abs{R}^+$ coïncide avec le radical de l'idéal engendré par $f_1$ et $f_2$. 
\end{lemme}  

Par conséquent, il suffit de calculer les localisations $R_{(f_1)}, R_{(f_2)}$ et $R_{(f_1f_2)}$. Commençons par $R_{(f_1)}(\textbf{n})$, qui est la réunion
\begin{equation*}
\bigcup_{d\geq 0} \{ T^d(\lambda_1,\ldots,\lambda_n)/T^d: \sum_{i=1}^n \abs{\lambda_i}\leq N^d \}
\end{equation*} L'inclusion $R_{(f_1)}(\textbf{n})\subset \Z^n=\Z(\textbf{n})$ étant évidente, il suffit de montrer la réciproque. Or, si l'on se donne un $n$-uplet $(\lambda_1,\ldots,\lambda_n)\in \Z^n$, il existe $d\geq 0$ tel que $\sum_{i=1}^n \abs{\lambda_i}\leq N^d$, donc $\Z(\textbf{n})\subset R_{(f_1)}$. On en déduit que $R_{(f_1)}=\Z$. Des raisonnements tout à fait analogues montrent que $R_{(f_2)}=A_N$ et que $R_{(f_1f_2)}=B_N$. Alors, $\mathrm{Proj\ } R$ est le recollement des ouverts $D_+(f_1)=\Spec\Z$ et $D_+(f_2)=\Spec A_N$ le long de $D_+(f_1f_2)=\Spec B_N$, de sorte que 
\begin{equation*}
\mathrm{Proj\ }R=D_+(f_1) \coprod_{D_+(f_1f_2)} D_+(f_2)=\Spec \Z \coprod_{\Spec B_N} \Spec A_N
\end{equation*} est isomorphe au schéma généralisé $\widehat{\Spec\Z}^{(N)}$. Cela démontre le théorème suivant:

\begin{theoreme} La compactification de $\Spec\Z$ est un pro-schéma projectif sur $\F_1$. 
\end{theoreme}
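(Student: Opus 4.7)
L'id�e est que presque tout le travail a d�j� �t� fait dans les paragraphes qui pr�c�dent: il ne reste essentiellement qu'� r�organiser les r�sultats et � v�rifier que la structure projective se comporte bien par passage � la limite. Je commencerais par rappeler que pour chaque $N>1$ le calcul explicite des localisations $R^{(N)}_{(f_1)}=\Z,$ $R^{(N)}_{(f_2)}=A_N$ et $R^{(N)}_{(f_1f_2)}=B_N$ (o� $f_1=T,$ $f_2=NT$) fournit un isomorphisme de sch�mas g�n�ralis�s
\begin{equation*}
\widehat{\Spec\Z}^{(N)} \cong \mathrm{Proj\ } R^{(N)},
\end{equation*}
faisant de chaque niveau du syst�me projectif un sch�ma g�n�ralis� projectif au sens de la d�finition 19.

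Ensuite, je v�rifierais que chaque $\widehat{\Spec\Z}^{(N)}$ est effectivement un $\F_1$-sch�ma, ce qui, d'apr�s la d�finition de sch�ma sur $\F_1,$ revient � exhiber une constante dans $\Gamma(\widehat{\Spec\Z}^{(N)},\mathcal{O})(\textbf{0}).$ Or, par la construction par recollement, les sections globales s'identifient au produit fibr� $\Z \times_{B_N} A_N$ dans la cat�gorie des anneaux g�n�ralis�s, et la constante commune $0\in\Z(\textbf{0})\cap A_N(\textbf{0})$ fournit un z�ro global. Chaque $\widehat{\Spec\Z}^{(N)}$ est donc un sch�ma projectif sur $\F_1.$

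Dans un troisi�me temps, je montrerais que les morphismes de transition $f_M^N$ respectent la structure projective. Pour cela, j'observerais que l'in�galit� d�finissante $\sum_i \abs{\lambda_i}\leq N^d$ entra�ne $\sum_i \abs{\lambda_i}\leq (NM)^d,$ de sorte que l'inclusion naturelle $R^{(N)}\hookrightarrow R^{(NM)}$ est un morphisme d'anneaux g�n�ralis�s $\N$-gradu�s. Il induit donc un morphisme $\mathrm{Proj\ } R^{(NM)} \to \mathrm{Proj\ } R^{(N)},$ et je v�rifierais, en testant sur le recouvrement affine standard $\{D_+(f_1), D_+(f_2)\}$ et en utilisant la bijection entre morphismes de sch�mas affines et morphismes d'anneaux de sections, que ce morphisme co�ncide avec celui $f_M^N$ construit � la main dans le paragraphe pr�c�dent. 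En particulier, la famille $\widehat{\Spec\Z}^\bullet$ est un syst�me projectif filtr� dans la cat�gorie des $\F_1$-sch�mas projectifs.

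La conclusion est alors automatique: la cat�gorie des pro-$\F_1$-sch�mas projectifs �tant par d�finition celle des syst�mes projectifs filtr�s de $\F_1$-sch�mas projectifs, la limite $\widehat{\Spec\Z}=\varprojlim_N \widehat{\Spec\Z}^{(N)}$ est par construction un pro-sch�ma projectif sur $\F_1.$ Le point le plus d�licat de ce plan est l'identification du morphisme de transition induit par $R^{(N)}\hookrightarrow R^{(NM)}$ avec le $f_M^N$ d�fini de mani�re ad hoc � la section 6.2; mais cette v�rification se r�duit � un calcul direct sur les deux cartes affines $D_+(f_1)$ et $D_+(f_2),$ gr�ce au fait d�j� observ� que $B_{NM}\cap B_N=B_N$ et que $A_{NM}\cap B_N=A_N.$
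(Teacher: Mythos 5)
Votre d\'emarche reprend pour l'essentiel celle du texte: l'identification $\widehat{\Spec\Z}^{(N)}\cong \mathrm{Proj\ }R^{(N)}$ via le calcul des localisations $R^{(N)}_{(f_1)}=\Z$, $R^{(N)}_{(f_2)}=A_N$ et $R^{(N)}_{(f_1f_2)}=B_N$ est exactement l'argument qui pr\'ec\`ede l'\'enonc\'e, et la conclusion par passage \`a la limite projective est la m\^eme. La v\'erification que chaque \'etage est un sch\'ema sur $\F_1$ (existence d'une constante dans $\Gamma(\widehat{\Spec\Z}^{(N)},\mathcal{O})(\textbf{0})\cong(\Z\times_{B_N}A_N)(\textbf{0})$) est un compl\'ement correct que le texte laisse implicite.

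En revanche, votre troisi\`eme \'etape contient une erreur v\'eritable. Notons $\varphi$ l'inclusion gradu\'ee $R^{(N)}\hookrightarrow R^{(NM)}$. Comme dans le cas classique, $\varphi$ n'induit un morphisme vers $\mathrm{Proj\ }R^{(N)}$ que sur l'ouvert $G(\varphi)=\bigcup_{s\in\abs{R^{(N)}}^+}D_+(\varphi(s))$ de $\mathrm{Proj\ }R^{(NM)}$, et cet ouvert ne contient pas le point \`a l'infini: tout \'el\'ement de degr\'e $d\geq 1$ de $\abs{R^{(N)}}^+$ s'\'ecrit $uT^d$ avec $u\in\Z$, $\abs{u}\leq N^d$, donc sa valeur dans la carte $D_+(NMT)=\Spec A_{NM}$ est $u/(NM)^d$, de valeur absolue au plus $M^{-d}<1$, c'est-\`a-dire un \'el\'ement de $\mathfrak{p}_\infty$. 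Le morphisme induit n'est donc d\'efini que sur $D_+(T)\cup D_+(NT)=\Spec\Z$ et ne peut co\"{\i}ncider avec $f_N^{NM}$, qui envoie $\infty$ sur $\infty$; on le voit aussi sur les cartes, la pr\'eimage de $D_+(f_2)=\Spec A_N$ par le morphisme induit \'etant $\Spec R^{(NM)}_{(NT)}=\Spec B_N$, alors que celle par $f_N^{NM}$ est $V_2=\Spec A_{NM}\cup W$. Heureusement, cette \'etape est superflue: le th\'eor\`eme ne demande que la projectivit\'e de chaque objet du syst\`eme, et les morphismes de transition $f_M^N$ sont d\'ej\`a des morphismes de sch\'emas g\'en\'eralis\'es construits dans la compactification \`a la main; il n'est pas n\'ecessaire qu'ils proviennent de morphismes d'anneaux gradu\'es. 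En supprimant cette \'etape, votre preuve co\"{\i}ncide avec celle du texte.
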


\subsection{Compactification d'Arakelov des variétés et fibrés en droite}

Dans cette section, on commence par revisiter l'analogie entre les corps de nombres et les corps de fonctions. Si $X$ est une variété algébrique lisse et projective sur un corps de fonctions $K=k(C)$, on appelle \textit{modèle} de $X$ un schéma projectif plat $\mathscr{X} \longrightarrow C$ dont la fibre générique $\mathscr{X}_\xi$ est isomorphe à $X$. On a déjà remarque que pour un certain nombre d'arguments (par exemple, si l'on veut utiliser la théorie de l'intersection afin d'estimer le nombres de points rationnels de $X$), il est indispensable que la courbe $C$ soit propre. Puisque tout corps de fonctions est une extension finie de $k(T)$, le problème consiste essentiellement en construire un modèle propre sur $K=k(T)$, puis on peut considérer la normalisation dans des corps plus grands. En procédant par analogie, le premier pas consiste en définir un modèle propre de $\Q$. Le premier candidat était $\Spec \Z$, dont le corps de fonctions rationnelles est $\Q$, mais on a montré qu'il n'est pas propre. Cela est à l'origine de la construction de $\widehat{\Spec\Z}$, que l'on peut voir comme un modèle propre et lisse de $\Q$. La première étape étant accomplie, il s'agit maintenant de trouver des modèles d'une variété algébrique définie sur $\Q$, que l'on peut appeler sa compactification d'Arakelov. Remarquons que dans le cadre des corps de nombres, on ne peut pas appliquer des procédés géométriques comme la normalisation.       

\smallskip

Soit $X$ une variété algébrique définie sur $\Q$. Puisque la catégorie des schémas $\mathscr{X}$ sur $\widehat{\Spec\Z}$ admettant une présentation finie est équivalente à la catégorie des triplets $(\mathscr{X}, \mathscr{X}_\infty, \theta)$, où $\mathscr{X}$ et $\mathscr{X}_\infty$ sont des schémas ayant une présentation finie sur $\Spec\Z$ et $\Spec\Z_{(\infty)}$ respectivement et $\theta: \mathscr{X}(\Q) \longrightarrow \mathscr{X}_\infty(\Q)$ est un isomorphisme de $\Q$-schémas \cite[7.1.23]{Du}, trouver un modèle de $X$ sur la compactification de $\Spec\Z$ est en fait la même chose que trouver des modèles de $X$ sur $\Spec\Z$ et sur $\Spec\Z_{(\infty)}$. Étant donné que la géométrie algébrique classique s'est occupé avec succès de la première partie, on se limite ici à décrire les modèles sur $\Spec\Z_{(\infty)}$. 

\begin{theoreme} Toute variété algébrique affine ou projective sur $\Q$ admet au moins un modèle ayant présentation finie sur $\widehat{\Spec\Z}$. 
\end{theoreme}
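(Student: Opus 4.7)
La strat\'egie naturelle consiste \`a exploiter l'\'equivalence de cat\'egories \'enonc\'ee juste avant l'\'enonc\'e (cf. \cite[7.1.23]{Du}): un sch\'ema $\widehat{\mathscr{X}}$ sur $\widehat{\Spec\Z}$ \`a pr\'esentation finie correspond \`a un triplet $(\mathscr{X}, \mathscr{X}_\infty, \theta)$, form\'e d'un $\Spec\Z$-sch\'ema $\mathscr{X}$ et d'un $\Spec\Z_{(\infty)}$-sch\'ema $\mathscr{X}_\infty$ \`a pr\'esentation finie, recoll\'es le long d'un isomorphisme $\theta$ de leurs fibres g\'en\'eriques sur $\Q$. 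La preuve se r\'eduit ainsi \`a la construction ind\'ependante de $\mathscr{X}$ et $\mathscr{X}_\infty$, puisque le recollement $\theta$ sera canonique d\`es que les deux mod\`eles auront la m\^eme fibre g\'en\'erique $X$.

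Pour $\mathscr{X}$, il suffit d'appliquer la th\'eorie classique. Si $X\subset \A^n_\Q$ est affine, on chasse les d\'enominateurs des polyn\^omes $f_1,\ldots,f_r\in\Q[T_1,\ldots,T_n]$ d\'efinissant $X$ et l'on pose $\mathscr{X}=\Spec\,\Z[T_1,\ldots,T_n]/(f_1,\ldots,f_r)$. Si $X\subset \mathbb{P}^n_\Q$ est projective d\'efinie par des polyn\^omes homog\`enes $F_1,\ldots,F_r$, on proc\`ede de fa\c{c}on analogue et l'on prend le $\mathrm{Proj}$ de l'anneau gradu\'e associ\'e sur $\Z$.

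Tout le poids de l'argument porte donc sur la construction du mod\`ele $\mathscr{X}_\infty$ sur $\Z_{(\infty)}$. Dans le cas affine, l'id\'ee est la suivante: on choisit un entier $N\geq 1$ suffisamment grand pour qu'apr\`es un changement de variables du type $T_j\mapsto T_j/N^{k_j}$, la somme des valeurs absolues des coefficients de chaque $f_i$ soit born\'ee par une puissance ad\'equate de $N$, satisfaisant ainsi la contrainte octa\'edrique propre \`a $\Z_{(\infty)}$. On d\'efinit $\mathscr{X}_\infty$ comme le spectre du quotient de l'analogue \og born\'e \fg{} de $\Z_{(\infty)}[T_1,\ldots,T_n]$ par l'id\'eal engendr\'e par les $f_i$, en imitant la construction de $A_N$ dans l'\'epigraphe 4.7. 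Dans le cas projectif, on imite plus directement encore la description de $\widehat{\Spec\Z}$ comme $\mathrm{Proj}\,R$: on pose $\mathscr{X}_\infty=\mathrm{Proj}\,S$, o\`u $S$ est l'anneau g\'en\'eralis\'e $\N$-gradu\'e dont la composante en degr\'e $d$ regroupe les combinaisons enti\`eres de mon\^omes de degr\'e $d$ v\'erifiant $\sum\abs{\lambda_i}\leq N^d$, quotient\'e par l'id\'eal homog\`ene engendr\'e par des versions convenablement rescal\'ees des $F_i$.

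L'obstacle principal r\'eside dans le contr\^ole de la norme octa\'edrique sous passage au quotient et sous localisation. Il faut en effet s'assurer (i) que l'id\'eal engendr\'e par les $f_i$ ou $F_i$ a un sens coh\'erent dans le cadre des anneaux g\'en\'eralis\'es, (ii) que l'anneau quotient reste une monade alg\'ebrique commutative admettant une pr\'esentation finie, et (iii) que l'inversion des puissances de $N$ (i.e. la localisation vers $B_N$, puis vers $\Q$) fait retrouver la fibre g\'en\'erique $X$. Ce dernier point est le plus d\'elicat: il traduit la compatibilit\'e entre la gradation octa\'edrique et la localisation, dans l'esprit de l'\'egalit\'e $B_N=A_N[f^{-1}]$ d\'emontr\'ee plus haut, et devra \^etre v\'erifi\'e en comparant soigneusement les composantes gradu\'ees de $S$ et les quotients de $\Z[T_0,\ldots,T_n]$ par l'id\'eal classique engendr\'e par les $F_i$.
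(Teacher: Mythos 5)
Votre r\'eduction initiale est la m\^eme que celle du texte: via l'\'equivalence \cite[7.1.23]{Du}, tout se ram\`ene \`a construire un mod\`ele sur $\Spec\Z_{(\infty)}$, la partie sur $\Spec\Z$ \'etant classique. Mais c'est pr\'ecis\'ement \`a cette \'etape que votre plan comporte une lacune r\'eelle: vous ne donnez pas la construction du mod\`ele \`a l'infini, vous la rempla\c{c}ez par un m\'ecanisme \`a la fois plus lourd et non justifi\'e (changement de variables $T_j\mapsto T_j/N^{k_j}$, gradation born\'ee par $N^d$, imitation de $A_N$), dont vous reconnaissez vous-m\^eme que les points essentiels (i)--(iii) restent \`a v\'erifier. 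L'id\'ee qui fait marcher la preuve est beaucoup plus simple: l'anneau g\'en\'eralis\'e $\Z_{(\infty)}[T_1,\ldots,T_k]$ s'identifie aux polyn\^omes $P=\sum_\alpha c_\alpha T^\alpha$ de $\Q[T_1,\ldots,T_k]$ v\'erifiant $\norme{P}=\sum_\alpha\abs{c_\alpha}\leq 1$, et il suffit de multiplier chaque \'equation $f_j$ par un scalaire rationnel non nul --- ce qui ne change ni l'id\'eal engendr\'e sur $\Q$ ni la vari\'et\'e $X$ --- pour obtenir $\norme{f_j}\leq 1$, donc $f_j\in\Z_{(\infty)}[T_1,\ldots,T_k]$. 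Le mod\`ele est alors $\Spec B$ (resp. $\mathrm{Proj\ }B$ dans le cas projectif, avec des $F_j$ homog\`enes rescal\'es de la m\^eme fa\c{c}on) o\`u $B=\Z_{(\infty)}[T_1,\ldots,T_k\,|\,f_1=0,\ldots,f_m=0]$ est de pr\'esentation finie par construction, et $B\otimes_{\Z_{(\infty)}}\Q=A$ redonne la fibre g\'en\'erique $X$. Aucun entier $N$ ni aucun contr\^ole de la norme sous localisation n'intervient.

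Votre recours \`a $A_N$ et \`a l'anneau gradu\'e de la section pr\'ec\'edente confond en outre deux constructions distinctes: $A_N$ et le $\mathrm{Proj}$ associ\'e servent \`a construire la base $\widehat{\Spec\Z}^{(N)}$ elle-m\^eme (d'o\`u la limite projective sur $N$), tandis que le mod\`ele d'une vari\'et\'e se construit directement au-dessus de l'anneau local \`a l'infini $\Z_{(\infty)}=\varinjlim A_N$, sans choisir de $N$ particulier. En rempla\c{c}ant votre troisi\`eme paragraphe par le rescalage scalaire des \'equations d\'ecrit ci-dessus, les v\'erifications que vous diff\'erez disparaissent et l'argument se referme.
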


\begin{proof} D'après la remarque précédente, il suffit de construire un modèle sur $\Spec \Z_{(\infty)}$, qui est le spectre de l'anneau de valuation archimédien sur $\Q$. On rappelle que l'anneau de polynômes $\Q[T_1,\ldots,T_k]$ admet une norme
\begin{equation*}
||P||=\sum_{\alpha=(\alpha_1,\ldots,\alpha_k)} \abs{c_\alpha}, \quad \textit{où} \quad P=\sum_{\alpha=(\alpha_1,\ldots,\alpha_k)}c_\alpha T^\alpha.
\end{equation*} Donc, on peut identifier l'anneau généralisé de polynômes $\Z_{(\infty)}[T_1,\ldots,T_k]$ avec les polynômes dans $\Q[T_1,\ldots,T_n]$ ayant norme plus petit ou égale à 1.   
 
\begin{enumerate}
\item (Cas affine). Soit $X=\Spec A$, où $A=\Q[T_1,\ldots,T_k]/(f_0,\ldots,f_m)$, une variété algébrique affine sur $\Q$. Quitte à multiplier par des scalaires non nuls dans $\Q$, on peut supposer que $||f_j||\leq 1$, ce qui revient à dire que $f_j \in \Z_{(\infty)}[T_0,\ldots,T_k]$. Soit
\begin{equation}\label{mod}
B=\Z_{(\infty)}[T_0,\ldots,T_k| f_1=0,\ldots,f_m=0]
\end{equation} Si l'on pose $\mathscr{X}=\Spec B$, on obtient un schéma affine généralisé ayant une présentation finie sur $\Z_{(\infty)}$ dont la fibre générique est 
\begin{equation*}
\mathscr{X}_{(K)}=\Spec(B\otimes_{\Z_{(\infty)} \Q})=\Spec A=X.
\end{equation*} Par conséquent, $\mathscr{X}/\Z_{(\infty)}$ est un modèle de $X/\Q$. 

\item (Cas projectif). Soit maintenant $X/\Q$ une variété projective. Alors $X=\mathrm{Proj\ }A$, où $A$ est le quotient de $\Q[T_0,\ldots,T_k]$ par un idéal homogène $(f_1,\ldots,f_m)$, et l'on peut supposer de nouveau que $||f_j||\leq 1$. Dans ce cas, $f_j \in \Z_{(\infty)}[T_0,\ldots,T_k]$. Définissons $B$ comme dans (\ref{mod}). Alors, le spectre projectif de l'anneau généralisé gradué $B$ est un modèle $\mathscr{X}=\mathrm{Proj\ } B$ de $X$ sur $\Z_{(\infty)}$. \qedhere
\end{enumerate} 
\end{proof}

\begin{remarque} Comme le lecteur aura déjà remarqué, la preuve du théorème ci-dessus n'utilise aucune propriété spéciale de $\Q$. En fait, on peut étendre le résultat à un corps $K$ quelconque et à son anneau de valuation archimédien $\mathcal{O}_\infty$ au sens de l'exemple 4.3. 
\end{remarque}

\smallskip

Puisque dans le cadre géométrique des corps de fonctions les fibrés en droite sur une courbe projective lisse se sont révélés un outil indispensable, on voudrait avoir des analogues arithmétiques. D'après la théorie d'Arakelov classique, un fibré en droite sur l'hypothétique compactification de $\Spec\Z$ est un fibré en droite sur $\Spec \Z$ muni de quelques données archimédiennes supplémentaires. Plus précisément, un fibré en droite sur $\Spec\Z$ est un couple $L=(L,<\cdot>_E)$ formé d'un $\Z$-module libre de rang fini $L$ et d'un produit scalaire hermitien sur $E\otimes_\Z \C$ qui est invariant sous la conjugaison complexe, donc qui définit un produit scalaire euclidien sur $E\otimes_\Z \R$. En posant comme d'habitude $||x||=<x,x>_E$, on peut voir un fibré en droite sur la compactification de $\Spec\Z$ comme un réseau euclidien $(\Z^d,||\cdot||)$. Cela suggère du coup une interprétation de l'information additionnelle en termes des $\Z_\infty$-structures introduites dans 4.2. 

\begin{definition} Un fibré en droite sur $\widehat{\Spec\Z}$ est un couple $L=(L,A)$ formé d'un fibré en droite sur $\Spec\Z$, i.e. d'un $\Z$-module libre de rang fini, et d'un $\Z_\infty$-réseau $A$ (i.e. d'un corps compact symétrique convexe) dans le $\R$-espace vectoriel $L\otimes_\Z \R$.
\end{definition}         

Alors, les sections globales correspondent à l'intersection $L\cap A$. Le comptage du nombre de points d'un réseau appartenant à un corps compact symétrique est un sujet d'étude classique. On a, par exemple, le théorème de Minkowski, d'après lequel un corps convexe symétrique dans $\R^n$ de volume plus grand que $2^n$ contient au moins un point du réseau $\Z^n$. Ce résultat, qui implique que chaque classe du groupe de classes d'idéaux d'un corps de nombres $K$ admet un représentant intégral de norme borné par une constante qui ne dépend que du discriminant et de la signature de $K$, peut être imaginé comme un théorème de Riemann-Roch pour les fibrés en droite sur $\widehat{\Spec\Z}$. On rencontre de nouveau le leitmotiv \textit{la géométrie devient combinatoire sur le corps à un élément}.     

\smallskip

La définition de fibré en droite sur la compactification de $\Spec\Z$ admet une extension à tout schéma généralisé. On se réfère à \cite[7.1.22]{Du} pour la preuve que lorsque l'on pose $X=\widehat{\Spec\Z}$, ces deux définitions en apparence si différentes sont en fait équivalentes:

\begin{definition} Soit $(X,\mathcal{O}_X)$ un schéma généralisé. On définit les \textit{fibrés en droite} sur $X$ comme étant les $\mathcal{O}_X$-modules $L$ localement isomorphes à $\abs{\mathcal{O}_X}=\mathcal{O}_X(1)$. 
\end{definition}

L'ensemble des classes d'isomorphie de fibrés en droite est un groupe pour le produit tensoriel $\otimes_{\mathcal{O}_X}$, l'élément neutre étant $\abs{\mathcal{O}_X}$. Si l'on pose $L^{-1}=\Hom(L,\abs{\mathcal{O}_X})$ pour tout fibré en droite $L$, il existe un isomorphisme $L \otimes_{\mathcal{O}_X} L^{-1} \longrightarrow \abs{\mathcal{O}_X}$, donc $L^{-1}$ est l'inverse de $L$. On appelle \textit{groupe de Picard} d'un schéma généralisé $X$ le groupe de classes d'isomorphie de fibrés en droite sur $X$ par rapport au produit $\otimes_{\mathcal{O}_X}$. L'objectif de ce dernière paragraphe est calculer le groupe de Picard de $\widehat{\Spec \Z}$. Pour faire cela, on remarque d'abord que le $\mathrm{Pic}(X)$ provenant de la définition 23 coïncide avec le groupe de Picard d'un schéma classique. Notamment, pour un anneau de Dedekind usuel $\mathrm{Pic}(\Spec R)=0$ si et seulement si $R$ est factoriel \cite[6.2]{Har}. On en déduit que $\mathrm{Pic}(\Spec\Z)=0$. De même, on peut montrer que tout fibré en droite sur le spectre de l'anneau généralisé $A_N$ est trivial à l'aide du lemme suivant, dont la preuve se trouve dans \cite[7.1.33]{Du}:           

\begin{lemme} Soit $P$ un $A_N$-module projectif de type fini. Si $\dim_\Q P_{(\Q)}=1$, alors $P$ est isomorphe à $\abs{A_N}$.
\end{lemme}

En effet, soit $L \in \mathrm{Pic}(\Spec A_N)$. Il existe un $A_N$-module projectif de type fini $P$ tel que $L=\tilde{P}$. Puisque $L$ est un fibré en droite, la fibre du point générique $L_\xi \cong P_{(\Q)}$ est unidimensionnelle. D'après le lemme $P \cong \abs{A_N}$, donc $L=\widetilde{\abs{A_N}}=\mathcal{O}_{\mathrm{Spec} A_N}$ est le fibré trivial sur $\Spec A_N$. Après avoir montré que tous les fibrés en droite sur les spectres de $\Z$ et de $A_N$ sont triviaux, ce n'est pas difficile à prouver que:    

\begin{theoreme} Le groupe de Picard de la compactification de $\Spec\Z$ est isomorphe au groupe additive des rationnels positifs. En particulier, c'est un groupe abélien libre de rang infini engendré par $\mathcal{O}(\log p)$ lorsque $p$ parcourt l'ensemble des nombres premiers.  
\end{theoreme}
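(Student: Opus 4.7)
Le plan consiste à calculer d'abord $\mathrm{Pic}(\widehat{\Spec\Z}^{(N)})$ pour chaque $N>1$ via le recouvrement ouvert $\widehat{\Spec\Z}^{(N)}=\Spec\Z\cup\Spec A_N$, d'intersection $\Spec B_N$, puis à passer à la limite inductive, les fibrés en droite sur le pro-schéma $\widehat{\Spec\Z}$ étant par définition colimite des fibrés aux niveaux finis. Un fibré en droite sur $\widehat{\Spec\Z}^{(N)}$ est donné par le recollement d'un fibré sur chacun des deux ouverts affines le long d'un isomorphisme sur l'intersection. Comme les résultats déjà établis juste avant l'énoncé donnent $\mathrm{Pic}(\Spec\Z)=0$ et $\mathrm{Pic}(\Spec A_N)=0$, les deux restrictions seront triviales, et la donnée de recollement se réduira à une unité de $\mathcal{O}^*(\Spec B_N)$, modulo les contributions provenant des deux ouverts.

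Un argument standard de cohomologie de Cech en degré un avec coefficients dans le faisceau $\mathcal{O}^*$ fournira alors la suite exacte
\begin{equation*}
\mathcal{O}^*(\Spec\Z)\times\mathcal{O}^*(\Spec A_N)\;\longrightarrow\;\mathcal{O}^*(\Spec B_N)\;\longrightarrow\;\mathrm{Pic}(\widehat{\Spec\Z}^{(N)})\;\longrightarrow\; 0.
\end{equation*}
Il restera ensuite à identifier les trois monoïdes d'unités en jeu: on a $\Z^*=\{\pm 1\}$ classiquement; pour $A_N^*$, la proposition qui décrit $A_N$ comme anneau local généralisé d'idéal maximal $\mathfrak{p}_\infty$ donne directement $A_N^*=\abs{A_N}\setminus\mathfrak{p}_\infty=\{\pm 1\}$; enfin, pour $B_N=\Z[1/N]$, le théorème fondamental de l'arithmétique fournit $B_N^*=\{\pm 1\}\times\bigoplus_{p\mid N}\Z$. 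Le conoyau du morphisme ci-dessus sera donc le groupe abélien libre $\bigoplus_{p\mid N}\Z$, engendré par les premiers divisant $N$.

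Je passerai ensuite à la colimite. Les morphismes de transition $f_N^{NM}$ construits dans la section précédente induisent par image réciproque $\mathrm{Pic}(\widehat{\Spec\Z}^{(N)})\to\mathrm{Pic}(\widehat{\Spec\Z}^{(NM)})$, compatibles avec les inclusions évidentes $\bigoplus_{p\mid N}\Z\hookrightarrow\bigoplus_{p\mid NM}\Z$; tout premier apparaissant éventuellement comme diviseur d'un $N$ suffisamment grand, la limite sera
\begin{equation*}
\mathrm{Pic}(\widehat{\Spec\Z})\;=\;\varinjlim_{N>1}\,\bigoplus_{p\mid N}\Z\;=\;\bigoplus_{p\text{ premier}}\Z.
\end{equation*}
Le générateur correspondant au premier $p$ s'identifiera au fibré $\mathcal{O}(\log p)$, et l'isomorphisme avec le sous-groupe additif de $\R$ engendré par les $\log p$ (ou, de manière équivalente, avec le groupe multiplicatif $\Q_{>0}$ via $e_p\mapsto p$) en découlera immédiatement.

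L'obstacle principal sera la justification rigoureuse de la suite de Mayer-Vietoris dans le cadre des schémas généralisés: il faudra développer assez de théorie cohomologique pour identifier $\mathrm{Pic}$ avec $H^1(\mathcal{O}^*)$ dans ce contexte et vérifier que le recouvrement $\{\Spec\Z,\Spec A_N\}$ est acyclique pour ce faisceau, ce qui revient essentiellement à contrôler les sections globales des unités sur les intersections. Une fois ceci acquis, le calcul des groupes d'unités et le passage à la limite sur le système filtré des $N$ seront formels et s'appuieront seulement sur les propriétés déjà établies des anneaux généralisés $A_N$ et $B_N$ (notamment que $A_N$ est local, d'unités $\{\pm 1\}$, et que $B_N$ est la localisation de $\Z$ en $N$).
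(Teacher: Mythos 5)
Votre d\'emarche est correcte et co\"{\i}ncide pour l'essentiel avec celle du texte~: m\^eme recouvrement $\{\Spec\Z,\Spec A_N\}$ d'intersection $\Spec B_N$, m\^emes groupes d'unit\'es $\Z^\times=\abs{A_N}^\times=\{\pm 1\}$ et $B_N^\times$, m\^eme passage \`a la limite inductive donnant $\log\Q_+^\ast$. La seule diff\'erence est que le texte contourne la justification cohomologique qui vous inqui\`ete en recollant les trivialisations \`a la main (un fibr\'e est d\'etermin\'e par sa fonction de transition $\lambda\in B_N^\times$ modulo l'ambigu\"{\i}t\'e $\{\pm 1\}$ des trivialisations sur chacun des deux ouverts), ce qui, pour un recouvrement \`a deux ouverts, rend la suite de Mayer--Vietoris superflue.
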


\begin{proof} La compactification de $\Spec\Z$ étant la limite projective des schémas généralisés $\widehat{\Spec\Z}^{(N)}$, la catégorie des fibrés en droite sur $\widehat{\Spec\Z}$ est la limite inductive des catégories des fibrés en droite sur $\widehat{\Spec\Z}^{(N)}$. Par conséquent: 
\begin{equation*}
\mathrm{Pic}(\widehat{\Spec\Z})=\varinjlim_{N>1} \mathrm{Pic}(\widehat{\Spec\Z}^{(N)}). 
\end{equation*} Pour calculer ces groupes, on remarque que le schéma généralisé $\widehat{\Spec\Z}^{(N)}$ est la réunion des ouverts $U_1=\Spec\Z$ et $U_2=\Spec A_N$ le long de $U_1\cap U_2=\Spec B_N$. Puisque les groupes de Picard de $\Spec \Z$ et de $\Spec A_N$ sont triviaux, étant donné un fibré en droite $L$ sur $\widehat{\Spec\Z}^{(N)}$, on peut choisir des trivialisations:
\begin{equation*}
\varphi_1: \mathcal{O}_{\Spec\Z} \longrightarrow L_{|U_1}, \quad \varphi_1: \mathcal{O}_{\Spec A_N} \longrightarrow L_{|U_2}
\end{equation*} Ce choix est canonique quitte à fixer le signe de $\varphi_1$ et $\varphi_2$, car l'on peut toujours multiplier les trivialisations par des éléments inversibles $s_1\in \Gamma(U_1,\mathcal{O}_{U_1}^\times)=\Z^\times=\{\pm 1\}$ et $s_2\in \Gamma(U_2,\mathcal{O}_{U_2}^\times)=\abs{A_N}^\times=\{\pm 1 \}$. Sur l'intersection $U_1\cap U_2$ ces trivialisations vérifient la condition de compatibilité ${\varphi_2}_{|U_1\cap U_2}=\lambda {\varphi_1}_{|U_1\cap U_2}$ pour un élément $\lambda$ inversible dans $B_N$. Réciproquement, si l'on se donne $\lambda\in B_N^\times$ positif, on peut construire un fibré en droite $L=\mathcal{O}(\log \lambda)$ en recollant des fibrés en droite triviaux sur $U_1$ et sur $U_2$ selon la formule ci-dessus. Lorsque le signe de $\lambda$ est fixé, cette correspondance est bijective. Donc, $\mathrm{Pic}(\widehat{\Spec\Z}^{N})$ est isomorphe à $\log B_N^\times/\{\pm 1\}$, c'est-à-dire, au group abélien $\log B_{N,+}^\times$ des éléments positifs inversibles de l'anneau $B_N=\Z[N^{-1}]$. Si l'on note par $p_1,\ldots,p_r$ l'ensemble des diviseurs premiers de $N$, le groupe de Picard est un groupe abélien libre de rang r ayant base $\{\mathcal{O}(\log p_i\}_{1\leq i\leq r}$. On en déduit:
\begin{equation*}
\mathrm{Pic}(\widehat{\Spec\Z})=\varinjlim_{N>1} \log B_{N,+}^\times=\log \Q_+^\ast. \qedhere
\end{equation*}
\end{proof}

\begin{remarque} Le fait que le rang du groupe de Picard de $\widehat{\Spec\Z}^{(N)}$ soit le nombre de diviseurs premiers de $N$ fournit une preuve alternative de l'énoncé affirmant que $f_M^N$ n'est pas un isomorphisme lorsqu'il existe $p$ premier tel que $p|M$, mais $p\nmid N$. 
\end{remarque}

\section{Questions ouvertes}

\begin{enumerate}
\item Comme on a remarqué dans la section concernant l'analogie entre les corps de nombres et les corps de fonctions, c'est nécessaire d'introduire un facteur local à l'infini afin d'avoir une équation fonctionnelle pour la fonction zêta. L'expression intégrale de ce nouveau facteur est tout à fait analogue à celle des facteurs correspondant aux premiers finis sauf pour l'apparition de la gaussienne $e^{-\pi x^2}$ au lieu de la fonction indicatrice de $\Z_\infty$, c'est-à-dire, de l'intervalle $[-1,1]$, que l'on s'attendrait naivement. L'intervention de la gaussienne ne peut pas être sans relation avec une interprétation probabiliste du passage à l'infini. À l'avis de l'auteur, bien comprendre ce facteur local à l'infini mérite toute notre attention, étant donné que la preuve de l'équation fonctionnelle des fonctions zêta des variétés de dimension supérieure sur $\Z$ reste toujours une question ouverte.   


\item Dans ce mémoire, on ne s'est pas occupé de la question si les groupes linéaires algébriques sont définis (au sens de Durov) sur $\F_1$ ou sur une extension convenable. En termes de foncteurs représentables \cite[5.1.21]{Du} et puis en introduisant la notion de déterminant, Durov montre \cite[5.5.17]{Du} que les seules schémas affines en groupes définis sur $\F_\emptyset$ sont les tores $\mathbb{G}^n_m$, que l'on peut définir le groupe général linéaire sur $\F_1$ et que $\SL_n$ existe sur l'extension quadratique $\F_{1^2}$. Cependant, on ne sait pas si l'on peut définir, par exemple, le groupe orthogonal dans ce cadre, car le concept de transposée d'une matrice pose des problèmes quand on travaille dans l'algèbre non-additive. Notamment, l'auteur s'intéresse à une possible application du concept récent de variété torifiée introduit dans \cite{LoLo} à la recherche de modèles d'autres groupes sur $\F_1$ ou $\F_{1^2}$. Dans ce même papier, J. López Peña et O. Lorscheid abordent la comparaison des notions de schéma sur le corps à un élément de Deitmar, Soulé et Connes-Consani. Il serait nécessaire d'étendre cette comparaison aux géométries de Töen-Vaquié, Shai-Haran et Durov.  

\smallskip

\item L'espace projectif $\mathbb{P}^{n-1}(\F_1)$ construit par Durov en termes du spectre projectif d'un anneau généralisé gradué ne contient pas $n$ points comme l'on s'attendait, mais $2^n-1$. Cependant, étant donné que $2^n-1$ est le nombre de parties non-vides d'un ensemble de $n$ points, l'auteur pense que c'est possible de trouver une certaine relation d'équivalence ou opération $n$-aire à $n$ orbites fermées. Notamment, il peut être intéressant de comparer le $\mathbb{P}^{n-1}(\F_1)$ de Durov avec l'espaces projectifs définis par Connes-Consani dans \cite{CoCa} comme le foncteur gradué
\begin{equation*}
\mathbb{P}^d: \mathcal{F}_{ab} \longrightarrow \mathfrak{Ens}, \quad \mathbb{P}^d(D)^{(k)}=\coprod_{Y\subset \{1,\ldots,d+1\}, \ \abs{Y}=k+1} D^Y/D,
\end{equation*} où $D$ agit à droite sur $D^{Y}$ par l'action diagonale. Ainsi, la partie de degré nul sur $\F_{1^n}$ est juste $\{1,\ldots,d+1\}$, donc $\abs{\mathbb{P}^{n-1}(\F_{1^n})^{(0)}}=n$. D'après la définition de Connes-Consani, la structure du projectif sur le corps à un élément est plus compliquée que celle d'un ensemble fini, mais la partie de degré nul coïncide avec la prévision de Tits. On peut attendre que cela soit pareil après une légère modification de la construction de Durov. 

\smallskip

\item Comme on a déjà signalé, le fait que le produit tensoriel $\Z \otimes_{\F_1} \Z$ soit $\Z$ de nouveau, et donc $\Spec\Z \times_{\Spec\F_1} \Spec\Z=\Spec\Z$, est en quelque mode décevant, car une des motivations initiales pour développer toute la théorie des anneaux et des schémas généralisés était la recherche d'une catégorie ayant des produits arithmétiques.
D'après Durov, cela rend inutile sur son approche en ce qui concerne l'étude de la fonction zêta. De l'autre côte, Connes-Consani, inspirés des idées de Soulé, ont réussi à construire les fonctions zêta de schémas sur $\F_1$. La compactification de $\Spec\Z$ obtenu dans ce mémoire a des très bonnes propriétés topologiques (tous les points sur $\xi$ sont fermés comme dans une courbe projective), mais par exemple son groupe de Picard n'est pas $\R_{>0}$ comme l'analogie avec le cas des corps de fonctions suggère, mais le groupe additif des rationnels positifs. Une façon \textit{definitive} de tester que l'on a trouvé la compactification correcte serait construire un cadre suffisamment vaste pour définir les fonctions zêta des schémas au sens de Durov et vérifier si cela donne 
\begin{equation*}
Z(\widehat{\Spec\Z},\ s)=\xi(s)=\pi^{-\frac{s}{2}}\Gamma(s/2)\zeta(s).
\end{equation*} En quelque sens, on peut dire que Durov a la courbe et Connes et Consani ont la fonction. À l'avis de l'auteur, la topologie de $\widehat{\Spec \Z}$ mérite encore plus d'étude. En effet, puisque la restriction à $\Spec \Z$ de chacun des morphismes $f_N^M$ du système projectif définissant la compactification est un isomorphisme, $f_N^M$ n'agit que sur le point à l'infini. Étant donné que le résultat final est un modèle lisse de $\Q$, cette limite inductive est une sorte de résolution infinie de singularités. Une autre question qui se pose est le calcul du produit tensoriel $\Z_{\infty} \otimes_{\F_1} \Z_{\infty}$.        
\end{enumerate}

\appendix

\section{Construction fonctorielle des schémas sur $\F_1$} 

Dans cet annexe, on présente une définition fonctorielle des schémas généralisés par analogie avec la géométrie algébrique classique. Rappelons d'abord qu'un \textit{faisceau} $\mathcal{F}$ sur un espace topologique $X$ est la donnée 
\begin{enumerate}
\item d'un ensemble (peut-être muni d'une certaine structure algébrique additionnelle) $\mathcal{F}(U)$, que l'on appelle les sections du faisceau, défini pour chaque ouvert $U \subset X$
\item d'une application de restriction des sections $p_{UV}: \mathcal{F}(U) \longrightarrow \mathcal{F}(V)$ définie pour chaque inclusion $V \subset U$, que l'on note $f \longmapsto f_{|V}$. 
\end{enumerate} Ensuite, on requiert la condition de recollement suivante: pour chaque recouvrement ouvert $\{U_i\}_{i\in I}$ de $U$, si $f_i \in \mathcal{F}(U_i)$ sont des sections dont les restrictions 
\begin{equation*}
(f_j)_{|(U_i\cap U_j)}=(f_i)_{|(U_i\cap U_j)}
\end{equation*} aux intersections des $U_i$ coïncident, il existe une unique section $f \in \mathcal{F}(U)$ dont les restrictions $f_{|U_i}=f_i$ aux $U_i$ sont égales aux sections données au départ. 

\smallskip

Essayons de récrire cette définition en termes catégoriques. Étant donné l'espace topologique $X$, on défini une catégorie $\mathrm{Ouv}(X)$ dont les objets sont les ouverts de $X$ et dont les morphismes sont les inclusions. Alors un faisceau $\mathcal{F}$ à valeurs dans une catégorie $\mathcal{C}$ basée sur les ensembles est un foncteur contravariant $\mathcal{F}: \mathrm{Ouv}(X)^\text{op} \longrightarrow \mathcal{C}$, où $\mathrm{Ouv}(X)^\text{op}$ représente, comme d'habitude, la catégorie duale$^\star$ de $\mathrm{Ouv}(X)$. Si $f: X \longrightarrow Y$ est une application continue entre les espaces topologiques $X$ et $Y$ et $\mathcal{F}$ est un faisceau sur $X$, on définit le faisceau $f_\ast\mathcal{F}$ sur $Y$ par la formule $f_\ast\mathcal{F}(V)=\mathcal{F}(f^{-1}(V))$ pour tout ouvert $V$ de $Y$. Maintenant, la condition de recollement signifie qu'une section globale est déterminée par ses valeurs locales et que si l'on a des sections locales qui se recollent, on peut construire une section globale. Cela se traduit dans l'exactitude de la suite
\begin{equation*}
\mathcal{F}(U) \longrightarrow \prod_{i \in I} \mathcal{F}(U_i) \rightrightarrows \prod_{i\in I} \mathcal{F}(U_i \cap U_j),
\end{equation*} où les flèches sont donnés par des restrictions des sections, pour tout recouvrement ouvert $\{U_i\}_{i\in I}$ de $U$. En effet, cela signifie que $\mathcal{F}(U)$ s'injecte dans le produit $\prod_{i \in I} \mathcal{F}(U_i)$ et que $\mathcal{F}(U)$ est l'égaliseur$^\star$ du couple des flèches $\prod_{i \in I} \mathcal{F}(U_i) \rightrightarrows \prod_{i\in I} \mathcal{F}(U_i \cap U_j)$, autrement dit, que si l'on a des $f_i \in \mathcal{F}(U_i)$ dont les images par les deux restrictions possibles à $U_i \cap U_j$ coïncident, il existe une seule section $f\in \mathcal{F}(U)$ telle que $f_{|U_i}=f_i$. Notons que dans la catégorie des espaces topologiques, $U_i	\cap U_j$ est isomorphe à $U_i \times_U U_j$.  

\smallskip

Pour généraliser la notion de faisceau à une catégorie quelconque, on a d'abord besoin de définir ce que l'on appelle une topologie de Grothendieck, une notion qui étend de façon naturelle les propriétés des recouvrements ouverts dans un espace topologique. Pour notre but, il suffit de définir une structure un peu plus générale:  

\begin{definition} Une \textit{prétopologie de Grothendieck} $\mathcal{T}$ dans une catégorie $\mathcal{C}$ est la donnée d'un ensemble $\mathrm{Rec}(\mathcal{T})$ de familles de morphismes (dites recouvrements) $\{\varphi_i: U_i \longrightarrow U\}_{i\in I}$ dans $\mathcal{C}$ définies pour chaque objet $U$ et telles que:
\begin{enumerate}
\item Si $\varphi$ est un isomorphisme, la famille $\{\varphi\}$ appartient à $\mathrm{Rec}(\mathcal{T})$.  
\item Si $\{\varphi_i: U_i \longrightarrow U\}$ et $\{\psi_{ij}: V_{ij} \longrightarrow U_i\}$ sont deux recouvrements dans $\mathcal{T}$, alors le recouvrement composé $\{ \varphi_i \circ \psi_{ij}: V_{ij} \longrightarrow U \}$ est de nouveau dans $\mathrm{Rec}(\mathcal{T})$.  
\item Si $\{U_i \longrightarrow U \}$ est un recouvrement et $V \longrightarrow U$ est un morphisme quelconque, le produit fibré $U_i \times_U V$ existe et la famille $\{U_i \times_U V \longrightarrow V\}$ appartient à $\mathrm{Rec}(\mathcal{T})$. 
\end{enumerate} On dit qu'une catégorie $\mathcal{C}$ muni d'une prétopologie de Grothendieck est un \textit{prétopos}. 
\end{definition}

Notamment, la catégorie $\mathrm{Ouv}(X)$ définie au début est un prétopos dont les familles recouvrantes sont les données par des inclusions. Les prétopos constituent, en quelque sens, le cadre minimal où l'on peut définir la notion de faisceau.  

\begin{definition} Soit $(\mathcal{C}, \mathcal{T})$ un prétopos et soit $\mathcal{D}$ une catégorie ayant des produits. On appelle \textit{faisceau} sur $\mathcal{C}$ à valeurs dans $\mathcal{D}$ un foncteur $F: \mathcal{C}^\text{op} \longrightarrow \mathcal{D}$ tel que pour tout famille recouvrante $\{\varphi_i: U_i\longrightarrow U \}$, la suite
\begin{equation*}
F(U) \longrightarrow \prod F(U_i) \rightrightarrows \prod F(U_i \times_U U_j),
\end{equation*} où toutes les flèches sont données par des restrictions des sections, soit exacte. 
\end{definition}

Cette définition va nous permettre de construire des faisceaux sur les schémas affines généralisés. Avant de les définir, on montre que c'est naturel de définir la catégorie des schémas affines généralisés comme la catégorie duale$^\star$ de celle des anneaux généralisés en rappelant le cas classique. Soit $A$ un anneau commutatif. La topologie de Zariski muni l'ensemble d'idéaux premiers de $A$ d'une structure d'espace topologique, que l'on note $\Spec A$. On veut définir maintenant un faisceau d'anneaux sur $\Spec A$ en imitant les propriétés des fonctions régulières sur une variété. Soit $X\subset \mathbb{A}^n_k$ une variété algébrique affine définie sur un corps $k$. Une fonction $f: X \longrightarrow k$ est dite régulière en un point $P \in X$ s'il existe un voisinage ouvert de $P$ dans $X$ tel que $f$ est le quotient de deux polynômes $g, h \in k[X_1,\ldots,X_n]$ dont $h$ ne s'annule pas sur $U$. Si l'on note par $\mathcal{O}(U)$ l'anneau des fonctions régulières sur $U$, on obtient ce que l'on appelle le faisceau structurel de $X$. Dans le cas du spectre, il n'y a pas un corps de base distingué comme but des fonctions, ce qui nous amène à prendre en même temps toutes les localisations $A_\mathfrak{p}$, où $\mathfrak{p} \in \Spec A$. Alors, étant donné un ouvert $U \subset \Spec A$, on considère les fonctions $s: U \longrightarrow   \coprod_{\mathfrak{p}\in U} A_\mathfrak{p}$ satisfaisant la condition de régularité suivante: soit $\mathfrak{p}\in U$, on requiert qu'il existe un voisinage ouvert $\mathfrak{p} \in V \subset U$ et deux éléments $a,f\in A$ tels que $f\notin \mathfrak{q}$ pour tout $\mathfrak{q}\in V$ (l'analogue de la non-annulation du polynôme $h$) et que $s(\mathfrak{q})=a/f$. L'ensemble $\mathcal{O}(U)$ formé des fonctions vérifiant la condition ci-dessus pour tout $\mathfrak{p}\in U$ admet une structure d'anneau, et l'on obtient ainsi un faisceau d'anneaux $\mathcal{O}$ sur $\Spec A$.    

\smallskip

Cette construction associe à chaque anneau commutatif $A$ un couple $(\Spec A, \mathcal{O})$ formé d'un espace topologique et d'un faisceau sur lui. Une question qui se pose immédiatement est s'il existe une catégorie $\mathcal{C}$ tel que $(\Spec A, \mathcal{O})$ soit un objet dans $\mathcal{C}$ pour tout anneau commutatif $A$ et que la correspondance $A \longmapsto (\Spec A, \mathcal{O})$ soit fonctorielle. Ce sont les \textit{espaces localement annélés}. Les objets de la catégorie sont les couples $(X, \mathcal{O}_X)$ formés d'un espace topologique $X$ et d'un faisceau d'anneaux sur $X$ tel que l'anneau des germes
\begin{equation*}
\mathcal{O}_{X,P}=\varinjlim_{P \in U}\mathcal{O}_P^\bullet, \quad \text{où} \quad \mathcal{O}_P^\bullet=((\mathcal{O}(U))_{P \in U}, (p_{UV})_{V\subset U})
\end{equation*} soit local pour tout $P \in X$. Les morphismes entre deux espaces localement annélés $(X, \mathcal{O}_X)$ et $(Y,\mathcal{O}_Y)$ sont les couples $(f,f^\sharp)$, où $f: X \longrightarrow Y$ est une application continue et $f^\sharp: \mathcal{O}_Y \longrightarrow f_\ast \mathcal{O}_X$ est un morphisme de faisceaux telle que l'application induite $f^\sharp: \mathcal{O}_{Y,f(P)} \longrightarrow \mathcal{O}_{X,P}$ soit un homomorphisme local d'anneaux locaux pour tout $P\in X$. 

\smallskip

Un \textit{schéma affine} est un espace localement annélé qui est isomorphe à $(\Spec A, \mathcal{O})$ pour un certain anneau commutatif $A$. On obtient ainsi une application des anneaux commutatifs vers les schémas affines, qui est fonctorielle au sens suivant: pour chaque homomorphisme d'anneaux $\varphi: A \longrightarrow B$, la formule $f(\mathfrak{p})=\varphi^{-1}(\mathfrak{p})$ définit une application continue $f:=\mathrm{Spec}(\varphi):  \Spec B \longrightarrow \Spec A$. De même, en localisant $\varphi$, on obtient un homomorphisme local $\varphi_\mathfrak{p}: A_{\varphi^{-1}(\mathfrak{p})} \longrightarrow B_\mathfrak{p}$ qui peut s'étendre, par la construction du faisceau structurel d'un spectre, à un morphisme de faisceaux $f^\sharp: \mathcal{O}_{\mathrm{Spec}A} \longrightarrow \mathcal{O}_{\mathrm{Spec}B}$ dont la restriction à chaque anneau de germes est $\varphi_\mathfrak{p}$. Par conséquent, $(f,f^\sharp)$ est un morphisme d'espaces localement annelés. On en déduit que le foncteur $\mathrm{Spec}$ est une équivalence contravariante entre la catégorie des anneaux commutatifs et la catégorie des schémas affines, donc \textit{les schémas affines constituent la catégorie duale des anneaux commutatifs}. Finalement, un \textit{schéma} est un espace localement annélé $(X, \mathcal{O}_X)$ où chaque point $P$ admet un voisinage ouvert $U$ dans $X$ tel que $(U, \mathcal{O}_{X|U})$ soit un schéma affine. 

\smallskip

Soit maintenant $\mathfrak{Gen}$ la catégorie des anneaux généralisés. D'après la discussion précédente, on définit les schémas affines généralisés comme la catégorie duale de $\mathfrak{Gen}$, i.e. $\mathfrak{Aff}:=\mathfrak{Gen}^{\textit{op}}.$ On note $\Spec R$, où $R$ est un anneau généralisé, les objets de cette catégorie. En particulier, un schéma affine généralisé est un foncteur contravariant de $\mathfrak{Gen}$ vers les ensembles. Étant donné un schéma affine généralisé $\Spec R$, pour définir une prétopologie de Grothendieck on considère les localisations de l'anneau généralisé $R$ dans des éléments $f_i \in \abs{R}$. Si l'on pose $U=\Spec R$ et $U_i=\Spec R_{f_i}$, on définit les familles recouvrantes comme les $\{U_i \longrightarrow U\}_{i\in I}$ telles que $\coprod_{i\in I} U_i \longrightarrow U$ soit une surjection au niveau d'espaces topologiques. Ce n'est pas difficile à vérifier que ces données satisfont les axiomes d'une prétopologie de Grothendieck, que l'on appellera la topologie de Zariski.  
Par conséquent, un faisceau sur le prétopos $\mathfrak{Aff}$ est un foncteur $F: \mathfrak{Aff}^\text{op}=\mathfrak{Gen} \longrightarrow \mathfrak{Ens}$ telle que la suite
\begin{equation*}
F(U) \longrightarrow \prod F(U_i) \rightrightarrows \prod F(U_i \times_U U_j)
\end{equation*} soit exacte pour toute famille recouvrante $\{U_i \longrightarrow U\}$ dans $\mathfrak{Aff}$.    

\smallskip

Il nous reste seulement à définir une condition analogue au fait qu'un schéma soit localement affine. On remarque que si l'on voit le spectre d'un anneau commutatif classique $R$ comme le foncteur de $\mathfrak{Ann}$ vers $\mathfrak{Ens}$ qu'au niveau des objets vaut $A \longrightarrow \Spec R(A)=\Hom_{\mathfrak{Ann}}(R,A)$, les ouverts de Zariski sont alors de sous-foncteurs de $\Spec R$ vérifiant quelques propriétés liées à la localisation. Cette notion se formalise de la façon suivante: 
       
\begin{definition} Soit $S: \mathfrak{Gen} \longrightarrow \mathfrak{Ens}$ un foncteur contravariant. Un sous-foncteur $U$ de $S$ est dit \textit{ouvert de Zariski} si pour tout anneau généralisé $B$ et pour tout transformation naturelle $u: \Spec B \longrightarrow S$, il existe un système multiplicatif $T \subset B$ tel que le produit fibré $U \times_S \Spec B$ soit naturellement isomorphe à $\Spec B[T^{-1}]$.
\end{definition}

Alors, un schéma généralisé est un faisceau qui admet un recouvrement par des ouverts isomorphes à des schémas affines généralisés. Plus précisément:

\begin{definition} Un \textit{schéma généralisé} est un foncteur contravariant $S: \mathfrak{Gen} \longrightarrow \mathfrak{Ens}$ qui est un faisceau pour la topologie de Zariski et qui est localement représentable, au sens où il existe un recouvrement $\{S_i\}_{i\in I}$ de $S$ par des sous-foncteurs ouverts de Zariski qui sont naturellement isomorphes à des schémas affines généralisés, c'est-à-dire, tels que $S_i\cong \Spec R_i$ pour des anneaux généralisés $R_i$.
\end{definition}


\section{Annexe: quelques outils catégoriques}

\textbf{Catégorie duale} Soit $\mathcal{C}$ une catégorie. On appelle catégorie duale de $\mathcal{C}$ la catégorie $\mathcal{C}^\text{op}$ dont les objets coïncident avec ceux de $\mathcal{C}$ et dont les morphismes sont obtenus en inversant les buts et sources des morphismes de $\mathcal{C}$, autrement dit $\Hom_{\mathcal{C}^\text{op}}(X,Y)\cong \Hom_\mathcal{C}(Y,X)$ pour tout couple d'objets $X,Y$ dans $\mathcal{C}$. La notion de catégorie duale permet par exemple d'éliminer la distinction entre foncteur covariant et contravariant, car un foncteur contravariant $F: \mathcal{C} \longrightarrow \mathcal{D}$ est un foncteur covariant $F: \mathcal{C}^\text{op}$ sur la catégorie duale. 

\medskip

\textbf{Catégorie monoïdale.} Une catégorie $\mathcal{C}$ est munie d'une structure de \textit{catégorie monoïdale} (ou de $\otimes$-catégorie associative unitaire) s'il existe 
\begin{enumerate}
\item un bifoncteur $\otimes: \mathcal{C} \times \mathcal{C} \longrightarrow \mathcal{C}$ associative à isomorphisme naturel près (i.e. $X\otimes(Y\otimes Z) \cong (X\otimes Y)\otimes Z$ pour tout triplet d'objets $(X,Y,Z)$)
\item une unité $\bf{1}\in\text{Ob(}\mathcal{C})$ à isomorphisme naturel près (i.e. ${\bf{1}}\otimes X \cong X \cong X\otimes {\bf{1}}$ pour tout objet $X$) 
\end{enumerate} tels que $\otimes$ vérifie le diagramme du pentagone et que $\bf{e}$ soit compatible avec l'associativité (voir \cite[XI.1]{McLa}). Des exemples de catégories monoïdales sont les ensembles avec le produit cartésien $(\mathfrak{Ens},\times,\{1\})$ ou les $R-$modules avec le produit tensoriel $(R-\mathfrak{Mod},\otimes,R).$ On dit qu'une catégorie monoïdale est symétrique s'il y a un isomorphisme naturel entre $X\otimes Y$ et $Y \otimes X$ pour tout $X,Y$ dans $\mathcal{C}.$

\medskip

\textbf{(Co)égaliseur.} Soient $X$ et $Y$ deux objets d'une catégorie $\mathcal{C}$ et soient $f,g: X \rightrightarrows Y$ deux morphismes de $X$ vers $Y.$ On appelle égaliseur du système le seul couple (à isomorphisme près) $(E, e)$ formé d'un objet $E$ de $\mathcal{C}$ et d'un morphisme $u: E \longrightarrow X$ tels que $f\circ e=g\circ e$ et que 
\begin{equation*}
\Hom_\mathcal{C}(P, E)\cong \{i\in \Hom_\mathcal{C}(P,X): f\circ i=g\circ i\}
\end{equation*}
soient naturellement isomorphes pour tout $P$ dans $\mathcal{C}.$ On appelle coégaliseur du système le seul couple (à isomorphisme près) $(Q,q)$ formé d'un objet de $\mathcal{C}$ et d'un morphisme $q: Y\longrightarrow Q$ tels que $q\circ f=q\circ g$ et que
\begin{equation*}
\Hom_\mathcal{C}(Q,P)\cong \{i\in \Hom_\mathcal{C}(Y,P): i\circ f=i\circ g
\end{equation*} soient naturellement isomorphes pour tout $P$ dans $\mathcal{C}.$ Autrement dit, l'égaliseur et le coégaliseur ont la propriété universelle de rendre les flèches $f$ et $g$ égales.  

\medskip

\textbf{Composition de transformations naturelles.} Soient $\mathcal{C},\mathcal{D},\mathcal{E}$ des catégories, $F, F':\mathcal{C} \longrightarrow \mathcal{D}$ et $G,G':\mathcal{D} \longrightarrow \mathcal{E}$ des foncteurs et $\xi: F \longrightarrow F', \eta: G \longrightarrow G'$ des transformations naturelles. On définit $\eta \star \xi: GF \longrightarrow G'F'$ comme la transformation naturelle dont les composantes sont 
\begin{equation*}
(\eta \star \xi)_X=\eta_{F'(X)}\circ G(\xi_X)=G'(\xi_X)\circ \eta_{F(X)}
\end{equation*} pour chaque objet $X$ dans $\mathcal{C}.$ Si $F$ est un foncteur, on pose $\eta\star F:=\eta\star \id_F.$

\medskip

\textbf{Coproduit.} Soit $\mathcal{C}$ une catégorie et soient $X$ et $Y$ deux objets de $\mathcal{C}.$ On appelle \textit{coproduit} (ou somme catégorique) de $X$ et $Y$ le seul objet $X\amalg Y$ tel que pour tout $Z \in \mathrm{Ob}(\mathcal{C})$ il existe un isomorphisme naturel 
\begin{equation*}
\Hom(X\amalg Y, Z) \cong \Hom(X,Z) \times \Hom(Y,Z). 
\end{equation*} Par exemple, dans la catégorie des ensembles $\mathfrak{Ens}$, la somme est la réunion disjointe, dans la catégorie des espaces topologiques pointés $\mathfrak{Top}_\ast$, c'est le produit wedge, dans la catégorie des groupes $\mathfrak{Grp}$ (resp. abéliens), c'est le produit libre (resp. la somme directe), et dans la catégorie des anneaux commutatifs $\mathfrak{Ann}$, c'est le produit tensoriel.

\medskip

\textbf{Foncteurs adjoints.} Soient $\mathcal{C}$ et $\mathcal{D}$ deux catégories. On dit que les foncteurs $F: \mathcal{C} \longrightarrow \mathcal{D}$ et $G: \mathcal{D} \longrightarrow \mathcal{C}$ sont \textit{adjoints} si et seulement si 
\begin{equation*}
\Hom_\mathcal{C}(X, G(Y))\cong \Hom_\mathcal{D}(F(X),Y),
\end{equation*} pour tout couple d'objets $X\in\text{Ob(}\mathcal{C}), Y\in\text{Ob(}\mathcal{D}),$ l'isomorphisme ci-dessus étant naturel en $X$ et $Y.$ Dans ce cas, le foncteur $F$ (resp. $G$) est dit l'adjoint à gauche (resp. à droite) du foncteur $G$ (resp. $F$). Lorsque l'on pose $Y=F(X)$ dans la définition, $\id_{F(X)}$ appartient au deuxième ensemble. Son image par l'isomorphisme est une application $\eta_X: X\longrightarrow GF(X)$ naturelle en $X.$ On appelle \textit{unité de l'adjonction} la transformation naturelle $\eta: \id_\mathcal{C} \longrightarrow GF$ dont les composantes sont données par $\eta_X.$ De même, une adjonction admet une \textit{co-unité} $\xi: FG \longrightarrow \id_\mathcal{D}$ définie de manière analogue en prenant $X=G(Y)$ dans la formule.     

\medskip

\textbf{Foncteur (pleinement) fidèle.} Considérons deux catégories $\mathcal{C}$ et $\mathcal{D}$ et un foncteur covariant $F: \mathcal{C} \longrightarrow \mathcal{D}$. Comme d'habitude, étant donné un couple d'objets $(X,Y)$ dans $\mathcal{C}$, F induit une application
\begin{equation*}
F_{X,Y}: \Hom_\mathcal{C}(X,Y) \longrightarrow \Hom_\mathcal{D}(F(X),F(Y))
\end{equation*} On dit alors que F est fidèle (resp. plein, pleinement fidèle) si $F_{X,Y}$ est injective (resp. surjective, bijective) pour tout $X,Y$ dans $\mathcal{C}.$  

\medskip

\textbf{Lemme de Yoneda.} Soient $X$ et $Y$ deux objets dans une catégorie $\mathcal{C}.$ Notons $h_A:=\Hom_\mathcal{C}(A,\cdot)$ le foncteur représenté par un objet $A$. Avec ces notations, le lemme de Yoneda affirme que pour toute transformation naturelle $\eta: h_Y \longrightarrow h_X,$ il existe un morphisme $f: X \longrightarrow Y$ tel que $\eta=h_f,$ où $h_f: \Hom_\mathcal{C}(Y,\cdot) \longrightarrow \Hom_\mathcal{C}(X,\cdot)$ est la transformation naturelle ayant composantes $h_{f,A}$ qui appliquent $\varphi: Y \longrightarrow A$ dans $\varphi \circ f: X \longrightarrow Y$. Notamment, cela implique que les objets représentant des foncteurs isomorphes sont ils mêmes isomorphes.   

\medskip

\textbf{Limite inductive.} Soit $\mathcal{C}$ une catégorie et soit $(I,\leq)$ un ensemble partiellement ordonné. Un système inductive d'objets dans $\mathcal{C}$ indexé par $I$ est une famille
\begin{equation*}
A_\bullet=((A_i)_{i\in I}, (f_{i,j})_{i\leq j})
\end{equation*} d'objets et pour chaque $i\leq j$ de morphismes $f_{i,j} : A_i \longrightarrow A_j$ tels que $f_{i,i}=\id_{A_i}$ et que $f_{i,k} = f_{i,j}\circ  f_{j,k}.$ Une limite inductive pour $A_\bullet$ est alors un objet $\varinjlim_{I}A_\bullet$ tel que pour tout objet $Z$ dans $\mathcal{C},$ on ait une bijection naturelle
\begin{equation*}
\Hom(\varinjlim_{I}A_\bullet, Z) \cong \varprojlim_{I}\Hom(A_i, Z),
\end{equation*} cette dernière limite étant la famille incluse dans $\prod_i \Hom (Z,A_i)$ de morphismes $h_i$ tels que $f_{i,j}\circ h_j=h_i.$ On dit que la limite est filtrée lorsque pour chaque couple $(p,q)$ d'éléments de $I,$ il existe $r \in I$ tel que $p\leq r, q\leq r.$  

\medskip

\textbf{Limite projective} Soit $\mathcal{C}$ une catégorie et soit $(I,\leq)$ un ensemble partiellement ordonné. On définit les limites projectives en inversant les buts et sources des morphismes dans la définition de limite inductive. Ainsi un système projectif d'objets dans $\mathcal{C}$ indexé par $I$ est une famille $A_\bullet=((A_i)_{i\in I}, (f_{i,j})_{i\leq j})$ d'objets et pour chaque $i\leq j$ de morphismes $f_{i,j} : A_j \longrightarrow A_i$ tels que $f_{i,i}=\id_{A_i}$ et que $f_{i,k} = f_{i,j}\circ  f_{j,k}.$ Une limite projective pour $A_\bullet$ est un objet $\varprojlim_{I}A_\bullet$ tel que pour tout $Z$ dans $\mathcal{C},$ on ait une bijection naturelle
\begin{equation*}
\Hom(Z, \varprojlim_{I}A_\bullet) \cong \varprojlim_{I}\Hom(Z, A_i),
\end{equation*} cette dernière limite étant l'ensemble des $h_i \in \prod_i \Hom (A_i,Z)$ tels que $f_{i,j}\circ h_j=h_i.$ 

\medskip

\textbf{Objet (co)simplicial.} Soit $\Delta$ la catégorie des ensembles finis totalement ordonnés munis des applications croissantes. Elle est équivalente à la catégorie dont les objets sont les ensembles $\textbf{n}$ et dont les morphismes sont les applications croissantes entre eux. On appelle objet (co)simplicial dans une catégorie $\mathcal{C}$ un foncteur (co)contravariant de $\Delta$ dans $\mathcal{C}.$ D'après l'équivalence des catégories mentionnées, un objet (co)simplical peut être pensé comme une famille d'objets $X_\textbf{n}$ munis des morphismes entre eux correspondant aux applications croissantes $\varphi: \textbf{n} \longrightarrow \textbf{m}$. En ce sens, une monade algébrique est un objet cosimplicial dans la catégorie des ensembles: la famille d'objets est $\{\Sigma(\textbf{n})\}_{n\geq 0}$ et les morphismes sont les applications $\Sigma(\varphi): \Sigma(\textbf{n}) \longrightarrow \Sigma(\textbf{m})$ induites par $\varphi: \textbf{n} \longrightarrow \textbf{m}$.

\medskip

\textbf{Produit fibré.} Soient $X, Y, S$ trois objets d'une catégorie $\mathcal{C}$ ayant des morphismes $f: X \longrightarrow S$ et $g: Y \longrightarrow S$ avec le même but. On appelle produit fibré de $X$ et $Y$ sur $S$ le seul triplet (à isomorphisme près) $(X\times_S Y, p, q)$ formé d'un objet $X\times_S Y$ et de deux morphismes $p: X\times_S Y \longrightarrow X, q: X\times_S Y \longrightarrow Y$  tels que pour tout objet $Z$ dans $\mathcal{C},$ l'on ait une bijection naturel d'ensembles
\begin{equation*}
\Hom(Z,X\times_S Y)\cong\{(h, k) \in \Hom(Z,X)\times \Hom(Z,Y): f\circ h=g\circ k\}.
\end{equation*} Par exemple, dans la catégorie des ensembles, le produit fibré n'est autre chose que $X\times_Z Y=\{(x,y)\in X\times Y: f(x)=g(y)\}$ muni des restrictions des projections usuelles $\text{pr}_1$ et $\text{pr}_2$ à cet ensemble. 

\medskip

\textbf{Sous-catégorie pleine.} Soit $\mathcal{C}$ une catégorie. Une sous-catégorie $\mathcal{D}$ de $\mathcal{C}$ est la donnée de quelques objets et de quelques morphismes dans $\mathcal{C}$ tels que 
\begin{enumerate}
\item Si $f:X \longrightarrow Y$ est un morphisme dans $\mathcal{D},$ alors $X,Y\in \mathrm{Ob}(\mathcal{D}).$
\item Si $X$ est un objet dans $\mathcal{D},$ alors $\id_X$ appartient aux morphismes de $\mathcal{D}.$
\item Si $f,g$ sont des morphismes composables dans $\mathcal{D},$ alors leur composition appartient à $\mathcal{D}.$
\end{enumerate} On dit que $\mathcal{D}$ est \textit{pleine} si le foncteur d'inclusion $i: \mathcal{D}\longrightarrow \mathcal{C}$ est plein d'après la définition ci-dessus, ce qui revient à dire que $\Hom_\mathcal{D}(X,Y)=\Hom_\mathcal{C}(X,Y)$ pour tout couple $(X,Y)$ d'objets dans $\mathcal{D}.$ 

\medskip

\textbf{Sous-foncteur.} Soit $F$ un foncteur d'une catégorie $\mathcal{C}$ vers les ensembles. On dit qu'un foncteur $G: \mathcal{C} \longrightarrow \mathfrak{Ens}$ est un sous-foncteur de $F$, et on le note $G \subset F$ par analogie avec les ensembles, si et seulement si 
\begin{enumerate}
\item pour tout objet $X$ dans $\mathcal{C}$, $F(X)=G(X)$ et
\item pour tout morphisme $f: X \longrightarrow Y$, $G(f)=F(f)_{|G(X)}$.  
\end{enumerate}


\bibliographystyle{alpha}

\end{document}